\newtheorem{theorem}{Theorem}[section]
\newtheorem{corollary}[theorem]{Corollary}
\newtheorem{lemma}[theorem]{Lemma}
\newtheorem{proposition}[theorem]{Proposition}
\theoremstyle{definition}
\newtheorem{definition}[theorem]{Definition}
\newtheorem{remark}{Remark}
\newtheorem{counter}{Counter-example}
\newtheorem{open}{Open problem}
\newtheorem{conjecture}{Conjecture}
\def\N{\mathbb{N}}
\def\R{\mathbb{R}}
\let\e=\varepsilon
\let\vp=\varphi
\let\t=\tilde
\let\ol=\overline
\let\ul=\underline
\let\mc=\mathcal
\def\ex{\exists\;}
\def\fa{\forall\;}
\def\pp{,\dots,}
\def\O{\Omega}
\def\loc{\text{\rm loc}}
\def\dist{\text{\rm dist}}
\def\supp{\text{\rm supp}}
\def\tr{\text{\rm Tr}}
\def\inter{\text{\rm int\,}}
\def\norma#1{\|#1\|_\infty}
\newcommand{\su}[2]{\genfrac{}{}{0pt}{}{#1}{#2}}
\def\eq#1{{\rm(\ref{eq:#1})}}
\def\thm#1{Theorem \ref{thm:#1}}
\def\seq#1{(#1_n)_{n\in\N}}
\def\limn{\lim_{n\to\infty}}
\def\LL{\mathcal{L}}
\def\pe{principal eigenvalue}
\def\MP{maximum principle}
\def\SMP{strong maximum principle}
\def\l{\lambda_1}
\newenvironment{formula}[1]{\begin{equation}\label{eq:#1}}
                       {\end{equation}\noindent}
\def\Fi#1{\begin{formula}{#1}}
\def\Ff{\end{formula}\noindent}
\def\uguale{\stackrel{u_0}{=}}
\def\minore{\stackrel{u_0}{\leq}}
\title{
\bf{Generalizations and properties of the principal eigenvalue of elliptic
operators in unbounded domains}}
\author{Henri Berestycki\thanks{
CAMS - \'Ecole des Hautes \'Etudes en Sciences Sociales,
190-198 avenue de France, 75013, Paris, France, {\itshape e-mail:
}\ttfamily hb@ehess.fr}\ \
and Luca Rossi\thanks{ Dipartimento di Matematica,
  Universit\`a di Padova,
via Trieste 63, 35121 Padova, Italy, {\itshape e-mail:
}\ttfamily lucar@math.unipd.it}}
\date{}
\begin{document}

\maketitle

\begin{abstract}
Using three different notions of generalized \pe\ of linear second order
elliptic operators in unbounded domains, we derive necessary and
sufficient conditions for the validity of the \MP, as well as for
the existence of positive eigenfunctions for the Dirichlet
problem. Relations between these \pe s, their
simplicity and several other properties are further discussed.
\end{abstract}

\medskip

{\em MSC 2010: }{Primary} 35J15; {Secondary} 35B09, 35B50, 35P15.

{\em Keywords: }Elliptic equations, unbounded domains, principal eigenvalue,
positive solutions, maximum  principle.


\tableofcontents


\section{Definitions and main results}


\subsection{Introduction}
What is the principal eigenvalue of a general linear second order
elliptic operator in an unbounded
domain associated with Dirichlet conditions~?
Under what conditions do such operators satisfy the \MP~? When do positive
eigenfunctions exist~? These are some of the themes
we discuss in this paper.

The Krein-Rutman theory provides the existence of the principal
(or first) eigenvalue $\lambda_\O$
of an elliptic operator $-L$ in a bounded smooth domain $\O$, under Dirichlet
boundary condition. This eigenvalue is the bottom of
the spectrum of $-L$, for the Dirichlet problem, it is simple
and the associated eigenfunction is positive in $\O$.
The positivity of $\lambda_\O$ guarantees the existence of a unique solution to
the inhomogeneous Dirichlet problem. These properties,
together with several others, have been extended by
H.~Berestycki, L.~Nirenberg and S.~R.~S.~Varadhan \cite{BNV} to the
case of {\em bounded non-smooth} domains by introducing the
notion of the {\em generalized \pe}.

In the present paper, we consider the
case of {\em unbounded domains}, continuing the
study begun in \cite{BHRossi}, in collaboration with F.~Hamel, and in
\cite{BR1}. Our aim is to emphasize the implications of
unboundedness of the domain rather than lack of smoothness.
For this reason, some of our results are stated for domains with smooth
boundaries even though the techniques of \cite{BNV} would allow one to extend
them to non-smooth domains. Let us also say from the outset that rather than
adopting a functional analytical point
of view, the object of study here are the very partial differential equations
(or inequalities) associated with the eigenvalue problem. We consider general
linear second order not necessarily self-adjoint operators. 
As we show here, some of the basic properties of the
\pe\ fail in general in the unbounded case.

In \cite{BHRossi}, it has been pointed out that the generalized \pe\ $\l$
of \cite{BNV} is not suited for characterizing the existence
of solutions for a class of semilinear problems in unbounded domains. It
is further shown that another quantity - denoted by $\l'$ - provides the right
characterization.

Here, we introduce still another quantity, $\l''$, which turns out
to provide a sufficient condition for the validity
of the maximum principle in unbounded domains. The main
object of this paper is to investigate the
relations between the three quantities $\l$, $\l'$, $\l''$ and their properties.
The relations
between $\l$ and $\l'$ have been established in \cite{BHRossi}, \cite{BR1},
but only in low dimension.
Here, we improve them to arbitrary dimension.

The forthcoming paper \cite{BNR}, in collaboration with G.~Nadin, deals with
extensions to parabolic operators of the notions introduced here. 
We also examine there the relationship of these notions with Lyapunov exponent
type ideas. 
Applications to nonlinear problems will be further discussed in \cite{BNR}.


\subsection{Motivations: semilinear problems, maximum principle and
eigenfunctions}
\label{sec:motivation}

Our interest for the generalization of the notion of \pe\ to unbounded domains
originally stemmed from the study of the Fisher-KPP reaction-diffusion equation
$$
\partial_t u-a_{ij}(x)\partial_{ij}u-b_i(x)\partial_i u=f(x,u),\quad
t>0,\ x\in\R^N,
$$
which arises for instance in some models in population dynamics.
In such models, the large time behavior of the population
- and in particular its persistence or extinction - is determined by
the existence
of a unique positive stationary solution. This, in turn,
depends on the sign of the \pe\ associated with the linearized operator
about $u\equiv0$:
$$
\LL u=a_{ij}(x)\partial_{ij}u+b_i(x)\partial_i u+f_s(x,0)u.
$$
When the coefficients of the equation do not depend on $x$,
the right notion of \pe\ is the quantity $\l$ introduced in \cite{BNV}, whereas
when the coefficients are periodic and $\LL$ is self-adjoint, it is the periodic
\pe\ (see \cite{BHR1}). 
In the general case considered in \cite{BHRossi}, one needs to consider both
$\l$ and $\l'$, the latter being a kind of generalization of
the periodic \pe.

Furthermore, the study of asymptotic spreading speeds for
general Fisher-KPP equations like the one above
involves principal eigenvalues of families of associated linear operators.
Building on the results and related notions to the ones presented here,
properties about the
asymptotic spreading speed for general non-homogeneous equations are
established in \cite{BN}.

Another motivation for our study comes from a very basic
question: 
does the sign of the generalized \pe\ $\l$ characterize the validity of the \MP\
for bounded solutions to linear
equations in unbounded domains ? This is known to be the case for bounded
domains. We
show here that the answer is no.
The necessary and sufficient conditions for the \MP\ will be shown here to
hinge on $\l'$ and on another generalization of the \pe, denoted by $\l''$.

It is also a very natural question in itself to determine what are the
eigenvalues associated with positive eigenfunctions for the Dirichlet condition,
as well as their multiplicities, for general operators in unbounded  domains.


\subsection{Hypotheses and definitions}\label{sec:hyp}

Throughout the paper, $\O$ denotes a domain in $\R^N$ (in general
unbounded and possibly non-smooth) and $L$ a general
elliptic operator in non-divergence form:
$$Lu=a_{ij}(x)\partial_{ij} u + b_i(x)\partial_i u +c(x)u$$
(the usual convention for summation from $1$ to $N$ on repeated
indices is adopted). When we say that $\O$ is smooth we mean that it is
of class
$C^{1,1}$. We use the notation $\ul\alpha(x),\
\ol\alpha(x)$ to indicate respectively the smallest and the
largest eigenvalues of the symmetric matrix $(a_{ij}(x))$,
i.~e.
$$\ul\alpha(x):=\min_{\su{\xi\in\R^N}{|\xi|=1}}a_{ij}(x)\xi_i\xi_j,\qquad
\ol\alpha(x):=\max_{\su{\xi\in\R^N}{|\xi|=1}}a_{ij}(x)\xi_i\xi_j.$$
The basic assumptions on the coefficients of $L$ are:
$$a_{ij}\in C^0(\ol\O),\qquad \fa x\in\ol\O,\quad\ul\alpha(x)>0,\qquad
b_i,c\in L^\infty_\loc(\ol\O).$$
These hypotheses will always be understood, unless otherwise specified, since
they are needed in most of our results.
Note that we allow the ellipticity of $(a_{ij})$ to
degenerate at infinity. Also, $C^0(\ol\O)$ denotes the space of
functions which are continuous on $\ol\O$, but not necessarily
bounded.
Additional hypotheses will be explicitly required in some of
the statements below.
The operator $L$ is said to be uniformly elliptic if $\inf_\O\ul\alpha>0$
and is termed self-adjoint if it can be written in the form
$$Lu=\partial_i(a_{ij}(x)\partial_j u) + c(x)u.$$

It is well known that if the domain $\O$ is bounded and smooth then
the
Krein-Rutman theory (see \cite{KR}) implies the existence of a unique real
number $\lambda=\lambda_\O$
such that the problem
$$
\left\{\begin{array}{ll}
-L\vp=\lambda\vp & \text{a.e.~in }\O\\
\vp=0 & \text{on }\partial\O\\
\end{array}\right.
$$
admits a positive solution $\vp\in W^{2,p}(\O)$, $\fa p<\infty$.
The quantity $\lambda_\O$ and
the associated eigenfunction $\vp$ (which is unique up to a
multiplicative constant) are respectively called Dirichlet \pe\
and eigenfunction of $-L$ in $\O$. Henceforth, we keep the notation
$\lambda_\O$ for this Dirichlet \pe.

The Krein-Rutman theory cannot be applied if $\O$ is non-smooth
or unbounded (except for problems in periodic settings), because the resolvent
of $-L$ is not compact. However, the fundamental properties of the
Dirichlet \pe\ have been extended in \cite{BNV} to the case of non-smooth
bounded domains considering the following notion:
\Fi{l1}\l(-L,\O):=\sup\{\lambda\ :\ \ex\phi\in
W^{2,N}_\loc(\O),\ \phi>0,\ (L+\lambda)\phi\leq0\text{
a.e.~in }\O\}.\Ff
If $\O$ is bounded and smooth,
then $\l(-L,\O)$ coincides with the classical Dirichlet \pe\ $\lambda_\O$.
An equivalent definition was previously given by S.~Agmon in \cite{A1}
in the case of operators in divergence form defined on Riemannian
manifolds and, for general operators, by R.~D. Nussbaum and Y.~Pinchover 
\cite{NP}, building on a result by M.~H. Protter and H.~F. Weinberger 
\cite{Max2}.

The quantity defined by \eq{l1} is our first notion of a generalized \pe\ in an
unbounded
domain. We also consider here two other generalizations.

\begin{definition}\label{def:l1}
For given $\O$ and $L$, we set
\begin{equation}\label{eq:l1'}\begin{split}
\l'(-L,\O):=\inf\{\lambda\ :\ \ex\phi\in W^{2,N}_\loc(\O)\cap
L^\infty(\O),\ \phi>0,\ (L+\lambda)\phi\geq0\text{ a.e.~in
}\O,\\
\fa\xi\in\partial\O,\ \lim_{x\to\xi}\phi(x)=0\};
\end{split}
\end{equation}
\Fi{l1''}\l''(-L,\O):=\sup\{\lambda\ :\ \ex\phi\in
W^{2,N}_\loc(\O),\ \inf_\O\phi>0,\ (L+\lambda)\phi\leq0\text{
a.e.~in }\O\}.\Ff
\end{definition}

The quantity $\l'$ has been introduced in \cite{BHR1},
\cite{BHRossi} and it also coincides with $\lambda_\O$
if $\O$ is bounded and smooth.
However, in contradistinction with $\l$, it is equal to the periodic \pe\ when
$\O$ and $L$ are periodic. Later on we will show that these two
properties are fulfilled by $\l''$ as well.

If $\O$ is smooth then the three quantities
$\l(-L,\O)$, $\l'(-L,\O)$, $\l''(-L,\O)$ -if finite- are eigenvalues for $-L$ 
in $\O$ under Dirichlet boundary
conditions. This follows from Theorems \ref{thm:spettro} and \ref{thm:relations}
part (ii) below and the obvious inequality $\l''(-L,\O)\leq\l(-L,\O)$.
But, as shown in Section \ref{sec:simplicity},
 the \pe s $\l'$, $\l''$ do not have in general admissible eigenfunctions,
i.e.~eigenfunctions satisfying the additional requirements of being bounded
from above or having positive infimum far from $\partial\O$ respectively.

It may occur that the sets in the definitions \eq{l1}, \eq{l1'} or \eq{l1''}
are empty (see Section \ref{sec:finite}). In such cases, we set
$\l(-L,\O):=-\infty$, $\l'(-L,\O):=+\infty$,
$\l''(-L,\O):=-\infty$ respectively. A sufficient (yet not necessary)
condition for the sets in \eq{l1}, \eq{l1''} to be nonempty is:
$\sup_\O c<\infty$, as is immediately seen by taking $\phi\equiv1$
in the formulas. We will find that $\l'<+\infty$ when $\O$ is
smooth as a consequence of a comparison result between $\l$ and $\l'$,
\thm{relations} part (ii).
If $\O$ is non-smooth then the boundary condition in \eq{l1'}
is too strong a requirement, and one should relax it in the sense
of \cite{BNV}. However, we do not stress the non-smooth aspect in the present
paper.
In Section \ref{sec:other}, we show that, if $\O$ is uniformly smooth
and $L$ is uniformly elliptic and has bounded coefficients, then
the definition \eq{l1''} of $\l''$ does not change if the condition $\inf\phi>0$
is only required in any subset of $\O$
having positive distance from $\partial\O$. This condition is
more natural because it is
satisfied by the classical Dirichlet principal eigenfunction when $\O$ is
bounded and smooth.

The admissible functions for $\l'$ and $\l''$ are bounded respectively
from above and from below by (a positive constant times) the function
$\beta\equiv1$.
Considering instead an arbitrary barrier $\beta$ yields further extensions of
these definitions.

\begin{definition}\label{def:l1b}
For given $\O,\ L$ and positive function $\beta:\O\to\R$, we set
\[\begin{split}
\lambda'_\beta(-L,\O):=\inf\{\lambda\ :\ \ex\phi\in W^{2,N}_\loc(\O),\
0<\phi\leq\beta,\ (L+\lambda)\phi\geq0\text{ a.e.~in
}\O,\\
\fa\xi\in\partial\O,\ \lim_{x\to\xi}\phi(x)=0\};
\end{split}\]
\[\lambda''_\beta(-L,\O):=\sup\{\lambda\ :\ \ex\phi\in
W^{2,N}_\loc(\O),\ \phi\geq\beta,\ (L+\lambda)\phi\leq0\text{
a.e.~in }\O\}.\]
\end{definition}

If $\l',\ \l''$ arise
in the study of the existence and uniqueness of positive {\em bounded} solutions
for
the Dirichlet problem, $\lambda'_\beta,\ \lambda''_\beta$ come into play when
considering solutions with prescribed maximal (or minimal) growth $\beta$.
We will mainly focus here on $\l'$ and $\l''$, but we also derive properties
with $\lambda'_\beta,\ \lambda''_\beta$ along the way.


\subsection{Statement of the main results}

We start with investigating the existence of eigenvalues
associated with positive eigenfunctions satisfying Dirichlet boundary
conditions. These are given by the problem
\Fi{Dev}
\left\{\begin{array}{ll}
-L\vp=\lambda\vp & \text{a.e.~in }\O\\
\vp=0 & \text{on $\partial\O\ $ (if $\O\neq\R^N$)}
\end{array}\right.
\Ff

\begin{definition}\label{def:ev}
We say that $\lambda\in\R$ is an {\em eigenvalue} of $-L$ in $\O$ (associated
with positive eigenfunction), under Dirichlet boundary condition,
if the problem \eq{Dev} admits a positive solution $\vp\in W^{2,p}_\loc(\ol\O)$,
$\fa p<\infty$.
Such a solution is called (positive) {\em eigenfunction} and the set of all
eigenvalues is denoted by $\mc{E}$.
\end{definition}

In the following, since we only deal with positive eigenfunctions, we omit
to mention it.
If $\O$ is bounded and smooth then it is well known that $\mc{E}=\{\lambda_\O\}$
and that this eigenvalue is simple.
This property is improved in \cite{BNV} to non-smooth domains, by replacing
$\lambda_\O$ with $\l(-L,\O)$ and imposing
the boundary conditions on a suitable subset of $\partial\O$.
The picture changes drastically in the case of unbounded domains.
Indeed, in Section \ref{sec:charE} below, we derive the following
characterization.

\begin{theorem}\label{thm:spettro}
If $\O$ is unbounded and smooth then
$\mc{E}=(-\infty,\l(-L,\O)]$.
\end{theorem}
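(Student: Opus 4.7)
The inclusion $\mc{E}\subseteq(-\infty,\l(-L,\O)]$ is immediate from \eq{l1}: any positive eigenfunction $\vp$ with eigenvalue $\lambda$ satisfies $(L+\lambda)\vp=0$, so it is admissible in the supremum defining $\l(-L,\O)$, whence $\lambda\le\l(-L,\O)$. The content of the theorem is the reverse inclusion, which we attack by the classical exhaustion-and-barrier method. Fix an increasing exhaustion $(\O_n)$ of $\O$ by smooth bounded domains with $\ol\O_n\subset\O_{n+1}$ and $\bigcup_n\O_n=\O$, and pick $x_0\in\O_1$. Denote by $\lambda_n:=\lambda_{\O_n}$ the classical Dirichlet \pe; restricting any admissible test in \eq{l1} to $\O_n$ shows $\lambda_n\ge\l(-L,\O)$.

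Take first $\lambda<\l(-L,\O)$, so that $\lambda<\lambda_n$ and $-(L+\lambda)$ satisfies the \MP\ on each $\O_n$. Let $\vp_n$ be the unique (positive, by the strong \MP) solution of the mixed Dirichlet problem
\[
(-L-\lambda)\vp_n=0\ \text{in }\O_n,\quad\vp_n=0\ \text{on }\partial\O\cap\partial\O_n,\quad\vp_n=1\ \text{on }\partial\O_n\setminus\partial\O,
\]
and normalize $\tilde\vp_n:=\vp_n/\vp_n(x_0)$, so $\tilde\vp_n(x_0)=1$. The Harnack inequality yields a uniform $L^\infty_\loc(\O)$ bound on $(\tilde\vp_n)$; combined with the barrier control below (which provides the analogous bound up to $\partial\O$) and standard interior/up-to-boundary $W^{2,p}$ estimates, a diagonal extraction produces a subsequence converging in $W^{2,p}_\loc(\ol\O)$, $\fa p<\infty$, to a function $\vp\ge0$ satisfying $-L\vp=\lambda\vp$ in $\O$ with $\vp(x_0)=1$. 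The strong \MP\ then gives $\vp>0$.

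For the Dirichlet condition and the $L^\infty$ bound near $\partial\O$ invoked above, fix $\xi\in\partial\O$ and use the $C^{1,1}$-smoothness of $\O$ to construct a standard local barrier $b$ on a small neighborhood $V\cap\O$ of $\xi$: $b(\xi)=0$, $b>0$ on $V\cap\O\setminus\{\xi\}$, and $-(L+\lambda)b\ge0$, with $V$ chosen small enough that $-(L+\lambda)$ itself satisfies the \MP\ on $V\cap\O$. Comparing $\tilde\vp_n$ with $Cb$ --- using $\tilde\vp_n=0$ on $V\cap\partial\O$ and the Harnack bound of $\tilde\vp_n$ on $\partial V\cap\O$ --- yields $\tilde\vp_n\le Cb$ in $V\cap\O_n$ uniformly in $n$, so that $\vp\le Cb$ near $\xi$ and $\vp(x)\to 0$ as $x\to\xi$.

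Finally, the boundary case $\lambda=\l(-L,\O)$ is handled by one further passage to the limit: apply the same Harnack/extraction machinery to the family $(\vp_{\mu_k})$ of eigenfunctions (normalized at $x_0$) produced by the previous step for any sequence $\mu_k\uparrow\l(-L,\O)$, the barrier bounds being uniform in $\mu$ close to $\l(-L,\O)$. The main technical difficulty throughout is the uniform-in-$n$ (and uniform-in-$\mu$) barrier estimate up to $\partial\O$ --- precisely the step where the smoothness hypothesis on $\O$ is indispensable; granting that, the remainder is routine elliptic compactness.
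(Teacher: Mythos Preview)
Your outline has the right architecture --- exhaust, normalize, extract --- but the boundary control step has a genuine gap, and it is precisely the place where the paper must work hardest.

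First a setup inconsistency: an exhaustion with $\ol\O_n\subset\O_{n+1}\subset\O$ forces $\partial\O_n\cap\partial\O=\emptyset$, so the ``mixed'' boundary condition has no zero part and your later claim that $\tilde\vp_n=0$ on $V\cap\partial\O$ is unavailable. You presumably mean domains that share boundary with $\O$, such as connected components of $\O\cap B_{r(n)}$; this is what the paper uses (handling the resulting non-smooth corners via the \cite{BNV} framework).

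The substantive gap is the barrier comparison. You invoke ``the Harnack bound of $\tilde\vp_n$ on $\partial V\cap\O$'' to pick $C$ with $\tilde\vp_n\le Cb$ on $\partial(V\cap\O_n)$. But $\partial V\cap\O$ is \emph{not} compactly contained in $\O$: it runs all the way down to $\partial\O$ where $\partial V$ meets $\partial\O$. Interior Harnack bounds $\tilde\vp_n$ only on compact subsets of $\O$, so you have no uniform-in-$n$ upper bound for $\tilde\vp_n$ on that portion of $\partial V$, and the comparison does not close. This is not an artifact of the choice of $V$: any neighborhood of $\xi\in\partial\O$ has its relative boundary in $\O$ accumulating on $\partial\O$. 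The paper identifies this as ``the main difficulty'' and resolves it with the \emph{boundary Harnack inequality} (\thm{bHarnack}): for a nonnegative solution vanishing on a smooth portion $T\subset\partial\O$, it bounds $\sup_{\O'}u$ by a constant times the value at a fixed interior point, uniformly up to $T$. That single estimate replaces your local-barrier step and delivers the uniform $L^\infty$ bound on $\tilde\vp_n$ in $\O\cap B_m$ for all large $n$; local boundary $W^{2,p}$ estimates and diagonal extraction then finish as you describe. The paper also organizes the cases differently: it treats $\lambda=\l(-L,\O)$ directly, using the generalized principal eigenfunctions of $-L$ in $\O_n$ (whose eigenvalues descend to $\l(-L,\O)$), and for $\lambda<\l(-L,\O)$ builds approximations from solutions of inhomogeneous problems with compactly supported right-hand side rather than from a mixed Dirichlet problem --- avoiding your second passage to the limit in $\mu_k\uparrow\l(-L,\O)$.
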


%
%

%

\thm{spettro} improves the property already known that the set of
eigenvalues associated with eigenfunctions {\em without
prescribed conditions} on $\partial\O$ coincides with
$(-\infty,\l(-L,\O)]$ (see, e.g., \cite{A1}). Note that
in the case of bounded smooth domains this property still holds if
one prescribes the Dirichlet condition on a proper subset of the
boundary. 
This example might lead one to believe that the reason why $\mc{E}$ does
not reduce to a singleton when $\O$ is unbounded
is that no Dirichlet condition is imposed at infinity.
Counter-example \ref{c-e} in Section \ref{sec:charE}
shows that this is not the case, even if one imposes an
exponential decay.

Recently, we came across the work \cite{Furusho1} by
Y.~Furusho and Y.~Ogura (1981) that does not seem to be very well
known. In that paper, they prove \thm{spettro} but only in the
case where $\O$ is an exterior smooth domain (and $L$ has smooth
coefficients). The main difficulty when dealing with general
unbounded domain is that, in order to construct a solution, one needs to control
the behavior near the boundary of a family of solutions in bounded
domains. We
achieve this
by use of an appropriate version of the {\em boundary Harnack
inequality} (also known as Carleson estimate) 
due to H.~Berestycki, L.~Caffarelli and L.~Nirenberg \cite{BCN3}. 

Let us further point out that, if $L$ has H\"older continuous coefficients,
the problem of the existence of eigenfunctions
(vanishing on $\partial\O$) can also be approached by using the
Green function and the Martin boundary theory (see, e.g., \cite{Pinch94}).
However, as far as we know, the result
of \thm{spettro} was not previously derived in the generality in which we state
it here.
\\

Next, we derive a necessary and a sufficient condition, expressed
in terms of $\l'$ and of $\l''$ respectively, for the validity of the \MP\ in
unbounded domains. With \MP\ we mean the following:

\begin{definition}\label{def:MP}
We say that the operator $L$ satisfies the {\em\MP\ }
({\em MP }for short) in $\O$ if every
function $u\in W^{2,N}_\loc(\O)$ such that
$$Lu\geq0 \ \text{ a.e.~in }\O,\qquad
\sup_{\O}u<\infty,\qquad
\fa\xi\in\partial\O,\ \ \limsup_{x\to\xi}u(x)\leq0,$$
satisfies $u\leq0$ in $\O$.
\end{definition}

Note that no conditions are imposed at infinity, except for the
boundedness from above. This condition is redundant if $\O$ is
bounded. In the case of bounded smooth domains, it is well known
that the MP holds iff $\lambda_\O>0$. This result is
improved in \cite{BNV} to bounded non-smooth domains by replacing
$\lambda_\O$ with $\l(-L,\O)$ and considering a refined version of
the \MP. The extensions and results in the general theory of \cite{BNV} are
recalled in the Appendix \ref{sec:BNV} here. For
unbounded domains, we will show that the validity of the MP is not
related to the sign of $\l$ (even if one restricts Definition
\ref{def:MP} to subsolutions decaying exponentially to $0$, see
Counter-example \ref{c-e}), but rather to those of $\l'$ and $\l''$.

\begin{theorem}\label{thm:MP}
The operator $L$ satisfies the MP in $\O$
\begin{enumerate}[(i)]
\item if $\;\l''(-L,\O)>0$ and the coefficients of $L$ satisfy
\Fi{ABC3}
\sup_\O c<\infty,
\qquad
\limsup_{\su{x\in\O}{|x|\to\infty}}\frac{|a_{ij}(x)|}{|x|^2}<\infty,
\qquad
\limsup_{\su{x\in\O}{|x|\to\infty}}\frac{b(x)\.x}{|x|^2}<\infty;
\Ff

\item only if $\;\l'(-L,\O)\geq0$.
\end{enumerate}
\end{theorem}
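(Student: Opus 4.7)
For part (ii), my plan is a direct contrapositive argument. Assume the MP holds on $\O$ and suppose, for contradiction, that $\l'(-L,\O)<0$; since the set appearing in the definition of $\l'$ is upward-closed in $\lambda$, I can pick $\lambda\in(\l'(-L,\O),0)$ together with $\phi\in W^{2,N}_\loc(\O)\cap L^\infty(\O)$ such that $\phi>0$, $\lim_{x\to\xi}\phi(x)=0$ for every $\xi\in\partial\O$, and $(L+\lambda)\phi\geq 0$. Since $\lambda<0$ and $\phi>0$, this forces $L\phi\geq-\lambda\phi\geq 0$, so $\phi$ itself satisfies the hypotheses of Definition~\ref{def:MP}. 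The MP would then give $\phi\leq 0$, contradicting $\phi>0$.

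For part (i), my plan is a Phragm\'en--Lindel\"of style argument using a function $\phi$ from the definition of $\l''$ as a weight. Fix $\lambda\in(0,\l''(-L,\O))$ and $\phi\in W^{2,N}_\loc(\O)$ with $\inf_\O\phi=1$ (after rescaling) and $(L+\lambda)\phi\leq 0$. Given a subsolution $u$ satisfying the hypotheses of Definition~\ref{def:MP}, I set $v:=u/\phi$; then $v$ is bounded above and $\limsup_{x\to\xi}v(x)\leq 0$ on $\partial\O$. The product-rule identity
\[
Lu\;=\;\phi\bigl(a_{ij}\partial_{ij}v+\tilde b_i\partial_i v\bigr)+v\,L\phi,\qquad \tilde b_i:=b_i+\frac{2\,a_{ij}\partial_j\phi}{\phi},
\]
shows that, with $\tilde c:=L\phi/\phi\leq-\lambda$ and $\tilde L:=a_{ij}\partial_{ij}+\tilde b_i\partial_i+\tilde c$, one has $\tilde L v=Lu/\phi\geq 0$ in $\O$. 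The problem is thereby reduced to a MP for the operator $\tilde L$, whose zero-order coefficient is strictly negative.

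To close the argument, I would construct a barrier $\psi\in W^{2,N}_\loc(\O)$ with $\psi\geq 1$, $\psi(x)\to\infty$ as $|x|\to\infty$, and $\tilde L\psi\leq 0$. For each $\varepsilon>0$, the function $v-\varepsilon\psi$ is bounded above, tends to $-\infty$ at infinity, and has $\limsup_{x\to\xi}(v-\varepsilon\psi)(x)\leq-\varepsilon$ on $\partial\O$. Either $\sup(v-\varepsilon\psi)\leq 0$, whence $v\leq\varepsilon\psi$ pointwise and letting $\varepsilon\downarrow 0$ yields $v\leq 0$; or the supremum is positive and attained at some interior $x_\varepsilon\in\O$, where the conditions $a_{ij}\partial_{ij}(v-\varepsilon\psi)\leq 0$, $\partial_i(v-\varepsilon\psi)=0$, $\tilde c\leq-\lambda$, and $(v-\varepsilon\psi)(x_\varepsilon)>0$ give
\[
\tilde L(v-\varepsilon\psi)(x_\varepsilon)\;\leq\;\tilde c(x_\varepsilon)(v-\varepsilon\psi)(x_\varepsilon)\;<\;0,
\]
whereas $\tilde L(v-\varepsilon\psi)=\tilde L v-\varepsilon\tilde L\psi\geq-\varepsilon\tilde L\psi\geq 0$, a contradiction. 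Hence $v\leq 0$, and so $u\leq 0$ in $\O$.

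The principal obstacle is constructing the barrier $\psi$: the modified drift $\tilde b=b+2\,a\nabla\phi/\phi$ has its $b$-component controlled by \eqref{eq:ABC3}, but carries a term $2\,a\nabla\phi/\phi$ not \emph{a priori} controlled. My plan here is to invoke the interior Harnack and gradient estimates for the positive (super)solution $\phi$ of $(L+\lambda)\phi\leq 0$, which, together with \eqref{eq:ABC3}, furnish a pointwise bound on $|\nabla\phi|/\phi$ that is mild enough so that $\psi(x)=(1+|x|^2)^{\sigma/2}$, with $\sigma>0$ chosen sufficiently small, satisfies $\tilde L\psi\leq 0$ outside a sufficiently large ball; a further small perturbation near the origin extends this to all of $\O$. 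This is the only step where the growth assumption \eqref{eq:ABC3} plays an essential role.
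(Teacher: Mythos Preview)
Your argument for part (ii) is correct and coincides with the paper's.

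For part (i), your transformation $v=u/\phi$ and Phragm\'en--Lindel\"of scheme are natural, but the step you flag as ``the principal obstacle'' is a genuine gap that your proposed fix does not close. The function $\phi$ supplied by the definition of $\l''$ is merely a \emph{supersolution} in $W^{2,N}_\loc(\O)$; interior Harnack and gradient estimates of the type $|\nabla\phi|/\phi\leq C$ are available for positive \emph{solutions} (with suitably regular coefficients), not for supersolutions. Moreover, under \eqref{eq:ABC3} the $a_{ij}$ may grow like $|x|^2$ and $b$ like $|x|$, so even for solutions the local Harnack constants blow up and yield no global bound on $|\nabla\phi|/\phi$. Consequently the modified drift $\tilde b=b+2a\nabla\phi/\phi$ is not controlled at infinity, and your polynomial barrier $\psi=(1+|x|^2)^{\sigma/2}$ cannot be shown to satisfy $\tilde L\psi\leq0$. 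A secondary technical issue is that $\nabla\phi$ need not be locally bounded (only in every $L^p_\loc$), so $\tilde b$ is not even in $L^\infty_\loc$, which complicates the pointwise maximum-point argument for the $W^{2,N}_\loc$ function $v-\e\psi$.

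The paper avoids transforming the operator altogether. It takes the same $\phi$ with $(L+\lambda)\phi\leq0$, $\inf\phi>0$, and penalizes it directly: with a radial function $\chi$ (equal to $|x|^\sigma$ under \eqref{eq:ABC3}) satisfying $L\chi\leq C\chi$, one sets $\phi_n:=\phi+\frac1n\chi$ and $k_n:=\sup_\O u/\phi_n$, which is attained at some $x_n$. The sequence $(k_n)$ is nondecreasing and bounded, hence convergent; comparing $k_n$ with $k_{2n}$ gives $\frac1n\chi(x_n)\leq 2(\frac1{k_n}-\frac1{k_{2n}})u(x_n)$, so the penalization term is small for large $n$. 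This forces $L\phi_n<0$ near $x_n$, and then $w_n:=k_n\phi_n-u$ is nonnegative, vanishes at $x_n$, and satisfies $Lw_n<0$ there, contradicting the strong maximum principle. The point is that no information on $\nabla\phi$ is ever needed.
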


Condition \eq{ABC3} \marginpar{*} is specific to the metric of $\R^N$. 
Actually, many of the results here can be extended to more general Remaniann 
manifolds. For this purpose, this condition (1.5) should be modified by 
involving the corresponding metric.

Y.~Pinchover pointed out to us that the hypothesis on the $a_{ij}$ in \eq{ABC3}
is sharp for statement (i) to hold. Indeed, it can be proved that the operator
$Lu=(1+|x|)^{2+\e}\Delta u-u$ in $\R^N$, with $\e>0$ and
$N\geq3$, satisfies $\l''(-L,\R^N)\geq1$
but the equation $Lu=0$ in $\R^N$ admits positive bounded solutions
(actually, one can show that $\l'(-L,\R^N)=-\infty$). Moreover, it is easy
to construct operators with $b_i(x)=O(|x|^{1+\e})$ for which $\l''>0$ and the MP
does not hold.
Let us mention that the continuity of
$(a_{ij})$ is not used in the proof of \thm{MP}, and that the
ellipticity is only required to hold locally uniformly in $\O$.
\thm{MP} is a particular case of \thm{bMP} below, which asserts that the
MP holds for subsolutions satisfying $\sup u/\beta<\infty$
instead of $\sup u<\infty$ - for a given barrier function $\beta$
growing at most exponentially - if $\lambda_\beta''(-L,\O)>0$ and only if
$\lambda_\beta'(-L,\O)\geq0$, where $\lambda_\beta'$, $\lambda_\beta''$ are
given by Definition \ref{def:l1b}. In the case of operators with bounded
coefficients and subsolutions bounded from above, the implication
$\l''>0\Rightarrow\,$MP is implicitly contained in Lemma 2.1 of \cite{BHRossi}.
Note that if $\O$ is bounded (possibly
non-smooth) then Proposition 6.1 of \cite{BNV} yields $\l''(-L,\O)=\l(-L,\O)$.
This is why, in that case, $\l(-L,\O)>0\Rightarrow\,$MP.
In the limiting case where $\l'$ and $\l''$ are equal to $0$, the MP might
or might not hold
(see Remark \ref{rem:MP0} below).\\

Next, we derive some relations between the generalized \pe s $\l,\ \l'$
and $\l''$.
\begin{theorem}\label{thm:relations}
Let $\O$ be smooth. Then, the following properties hold:

\begin{enumerate}[(i)]
\item if $L$ is self-adjoint and the $a_{ij}$ are bounded then
  $\lambda_1(-L,\O)=\lambda_1'(-L,\O)$;

\item for general $L$ it holds that
$\l'(-L,\O)\leq\l(-L,\O)$;

\item under the growth condition \eq{ABC3} it holds that
$\l''(-L,\O)\leq\l'(-L,\O)$.
\end{enumerate}
\end{theorem}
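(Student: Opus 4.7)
The plan is to handle (iii) first, as it is a quick consequence of \thm{MP}; then (i), via an Allegretto--Piepenbrink/Rayleigh-quotient trick enabled by self-adjointness; and finally (ii), whose main obstacle is the global boundedness of a limiting eigenfunction.

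For (iii) I argue by contradiction. Suppose $\l''(-L,\O)>\l'(-L,\O)$ and fix $\mu\in(\l'(-L,\O),\l''(-L,\O))$. Since the admissible set defining $\l'$ is upward-closed in the parameter, $\mu>\l'$ produces $\phi\in W^{2,N}_\loc(\O)\cap L^\infty(\O)$ with $\phi>0$, $\phi\to 0$ at $\partial\O$, and $(L+\mu)\phi\geq 0$. Shifting to $L_\mu:=L+\mu$ preserves the growth condition \eq{ABC3} (only the zeroth-order coefficient moves by $\mu$) and shifts $\l''$ by $-\mu$, so $\l''(-L_\mu,\O)=\l''(-L,\O)-\mu>0$. \thm{MP}(i) then gives the MP for $L_\mu$ in $\O$, which applied to $\phi$ forces $\phi\leq 0$, a contradiction.

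For (i), given $\mu>\l'(-L,\O)$ with admissible $\phi$ and any $\psi\in C_c^\infty(\O)$, I would multiply $(L+\mu)\phi\geq 0$ by $\psi^2/\phi$, integrate over $\O$, and integrate by parts using the self-adjoint form $Lu=\partial_i(a_{ij}\partial_j u)+cu$; the compact support of $\psi$ kills the boundary terms. The Cauchy--Schwarz estimate
\[
2a_{ij}\frac{\partial_j\phi}{\phi}(\partial_i\psi)\psi\;\leq\;a_{ij}\frac{\partial_i\phi\,\partial_j\phi}{\phi^2}\psi^2+a_{ij}\partial_i\psi\,\partial_j\psi,
\]
valid by positive semidefiniteness of $(a_{ij})$, absorbs the cross terms and yields
\[
\int_\O a_{ij}\partial_i\psi\,\partial_j\psi\;-\;\int_\O c\,\psi^2\;\leq\;\mu\int_\O\psi^2.
\]
Exhausting $\O$ by smooth bounded $\O_n\nearrow\O$, the classical Rayleigh characterisation of $\lambda_{\O_n}$ as the infimum of the above quotient over $H^1_0(\O_n)\setminus\{0\}$, combined with the standard monotone identity $\lambda_{\O_n}\searrow\l(-L,\O)$, gives $\l(-L,\O)\leq\mu$. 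Sending $\mu\searrow\l'(-L,\O)$ and combining with (ii) yields equality; the boundedness of the $a_{ij}$ is what keeps the exhaustion-based variational theory well-posed.

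For (ii), I would exhaust $\O$ by smooth bounded $\O_n\nearrow\O$ and take the Dirichlet principal eigenpairs $(\lambda_n,\vp_n)$ on $\O_n$, so that $\lambda_n\searrow\l(-L,\O)$. Normalising $\vp_n(x_0)=1$ at a fixed $x_0\in\O_1$, interior Harnack and $W^{2,p}$ estimates produce a subsequential limit $\vp>0$ in $W^{2,p}_\loc(\O)$ with $-L\vp=\l(-L,\O)\vp$. For the boundary condition, the boundary Harnack (Carleson) estimate of \cite{BCN3} applied to $\vp_n$ near each $\xi\in\partial\O$ gives a uniform-in-$n$ domination of $\vp_n$ by a fixed positive function vanishing on $\partial\O$, and passing to the limit yields $\vp(x)\to 0$ as $x\to\xi$. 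The genuinely hard point is the global boundedness of $\vp$: the pointwise normalisation only supplies local $L^\infty$ control via Harnack, and a priori $\vp$ could blow up at infinity. My plan to remedy this is to renormalise $\vp_n$ by $\|\vp_n\|_{L^\infty(\O_n)}$ and analyse the maximisers $p_n$: the scenario $p_n\to\xi\in\partial\O$ is ruled out by the Carleson estimate (which forces $\vp_n(p_n)\to 0$), and the remaining delicate case $|p_n|\to\infty$ would be handled either by a translation argument along $(p_n)$ or by comparison with a bounded supersolution built from the exhaustion itself, both leaning on the smoothness of $\O$ and the uniform Carleson bound near the boundary. Once $\vp$ is bounded, it witnesses $\l'(-L,\O)\leq\l(-L,\O)$.
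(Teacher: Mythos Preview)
Your arguments for (iii) and (i) are correct. For (iii) you do exactly what the paper does. For (i) you take a genuinely different route: the paper plugs $\phi\chi_r$ into the Rayleigh--Ritz formula \eq{RR}, integrates by parts, and then needs the boundedness of $\phi$ (and of the $a_{ij}$) to show that $\int_{\O\cap(B_r\setminus B_{r-1})}\phi^2\big/\int_{\O\cap B_{r-1}}\phi^2$ has vanishing $\liminf$ via a growth-vs-pigeonhole argument. Your Allegretto--Piepenbrink computation with test function $\psi^2/\phi$ bypasses this entirely; it never actually uses that $\phi$ is bounded, and it reaches the Rayleigh quotient bound in one stroke. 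So your approach is more elementary here, while the paper's cutoff method is what lets them extend the result to barriers $\beta$ with subexponential growth (their Remark~\ref{rem:lb'=l}).

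For (ii), however, there is a genuine gap. Your plan is to produce a \emph{bounded} eigenfunction $\vp$ at the eigenvalue $\l(-L,\O)$, vanishing on $\partial\O$, which would witness $\l'\leq\l$. But such a bounded eigenfunction need not exist. Take $Lu=u''+u'$ on $\R$: the eigenvalue equation $u''+u'+\lambda u=0$ has positive solutions iff $\lambda\leq\tfrac14$, so $\l(-L,\R)=\tfrac14$; at $\lambda=\tfrac14$ the only positive eigenfunction (up to scalar) is $e^{-x/2}$, which is unbounded. Yet $\l'(-L,\R)\leq0$ (take $\phi\equiv1$), so $\l'<\l$ strictly and the inequality (ii) holds---just not via a bounded eigenfunction. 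Your two proposed rescues both fail in this example: a translation along $|p_n|\to\infty$ would at best yield an eigenfunction on a limiting domain, not on $\O$; and there is no bounded supersolution of $(L+\tfrac14)v\leq0$ on all of $\R$ to compare with. The paper circumvents this obstruction by an entirely different device: fixing $\lambda>\l$ (so $\l(-L,\O')<0$ on some bounded $\O'$), it solves the auxiliary \emph{semilinear} problem $Lu=c^+(x)u^2$ with subsolution the extended principal eigenfunction of $\O'$ and supersolution $\ol u\equiv1$, obtaining a bounded positive $u$ with $Lu\geq0$ and $u=0$ on $\partial\O$, hence $\l'\leq0=\lambda$. The nonlinearity is precisely what manufactures a bounded subsolution without requiring a bounded eigenfunction.
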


From the above result and the definitions of $\l$ and $\l'$ it follows that,
if a self-adjoint operator $L$ with $a_{ij}\in L^\infty(\O)$ admits a {\em 
bounded}
(positive) eigenfunction associated with an eigenvalue $\lambda\in\R$,  
then necessarily
$\lambda=\l(-L,\O)=\l'(-L,\O)$.

We actually prove \thm{relations} part (i) with $\lambda_1'(-L,\O)$ replaced by
$\lambda_\beta'(-L,\O)$, for any barrier
$\beta$ with subexponential growth.

In the case of uniformly elliptic operators with bounded
smooth coefficients, the inequality $\l'\leq\l$ was proved in
\cite{BR1} in dimension $1$, together
with the inequality $\l'\geq\l$ for self-adjoint operators
in dimension less than $4$ (subsequently improved to non-smooth
operators in \cite{Pinch07}).
The question in arbitrary dimension was stated as an open problem. With the
results here, it is now
completely solved. Instead, the relations between $\l'$ and
$\l''$ are not fully understood; \thm{relations} part (iii) gives only
a partial information. Indeed, we do not know any example of operators for which
$\l''<\l'$.
We leave it as an open problem to prove the following

\begin{conjecture}\label{conj}
If $\O$ is smooth and $L$ has bounded coefficients then
$\l'(-L,\O)=\l''(-L,\O)$.
\end{conjecture}

We are able to prove Conjecture \ref{conj} in some particular cases, where we
actually show that all three notions of generalized \pe s coincide.

\begin{theorem}\label{thm:SC}
Let $\O$ be unbounded and smooth. Then
$\l(-L,\O)=\l''(-L,\O)$ ($=\l'(-L,\O)$ if \eq{ABC3} holds)
in each of the following cases:
\begin{enumerate}[1)]
\item $L$ is a self-adjoint, uniformly elliptic operator with bounded 
coefficients and either
$N=1$ or $\O=\R^N$ and $L$ is radially symmetric;

\item $L=\t L+\gamma(x)$, where $\t L$ is an elliptic operator
such that $\l(-\t L,\O)=\l''(-\t L,\O)$ and $\gamma\in L^\infty(\O)$ is
nonnegative and satisfies
$\displaystyle\lim_{\su{x\in\O}{|x|\to\infty}}\gamma(x)=0$;

\item
$$\l(-L,\O)\leq-\limsup_{\su{x\in\O}{|x|\to\infty}}c(x);$$

\item the $a_{ij}$ are bounded,
$L$ is uniformly elliptic and it is either self-adjoint or in
non-divergence form with $\displaystyle\lim_{\su{x\in\O}{|x|\to\infty}}b(x)=0$,
and
$$\fa r>0,\
\fa\beta<\limsup_{\su{x\in\O}{|x|\to\infty}}c(x),\quad
\ex B_r(x_0)\subset\O\ \text{ s.~t.~}\ \inf_{B_r(x_0)} c>\beta.$$
\end{enumerate}
\end{theorem}

We remark
that the hypothesis on $c$ in the case 4 of \thm{SC} is fulfilled
if $\O=\R^N$ and $c(x)\to \gamma(x/|x|)$ as $|x|\to\infty$, with
$\gamma$ lower semicontinuous.
Cases 2-4 will be derived from a general result
- \thm{equivalence} below - which provides a useful characterization for
$\l''$. One of the tools used in its proof is an extension of the
boundary Harnack inequality to inhomogeneous Dirichlet problems. 
Another tool is a continuity property of $\l(-L,\O)$ with respect to
perturbations of the domain $\O$. In particular, we derive the following
continuity property with respect to {\em exterior} perturbations, which is of
independent interest.

\begin{theorem}\label{thm:l1dec}
Let $(\O_n)_{n\in\N}$ be a family of domains
such that $\O_1\backslash\O$ is bounded, $\partial\O$ is smooth in a
neighborhood of $\ol{\O_1\backslash\O}$ and
$$\fa n\in\N,\quad\O_n\supset\O_{n+1}\supset\O,
\qquad\bigcap_{n\in\N}\ol\O_n=\ol\O.$$
Then $\lambda_1(-L,\O_n)\nearrow\lambda_1(-L,\O)$ as
$n\to\infty$.
\end{theorem}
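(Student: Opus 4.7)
The plan is to establish monotonicity and the upper bound $\lim_n \l(-L,\O_n) \le \l(-L,\O)$ directly from the supremum characterization \eq{l1}, and then to obtain the reverse inequality by building a positive eigenfunction of $-L$ on $\O$ at the level $\lambda_\infty := \lim_n \l(-L,\O_n)$, as a limit of eigenfunctions on $\O_n$. The easy direction is immediate: any function $\phi$ admissible for the sup \eq{l1} on $\O_n$ (i.e.\ $\phi > 0$ and $(L+\lambda)\phi \leq 0$ a.e.) is automatically admissible on any open subdomain, since these conditions are local. Hence the chain $\O \subset \O_{n+1} \subset \O_n$ gives that $\lambda_n := \l(-L,\O_n)$ is non-decreasing in $n$ and bounded above by $\l(-L,\O)$, and the limit $\lambda_\infty$ exists with $\lambda_\infty \leq \l(-L,\O)$.

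For the reverse inequality, for each $n$ I would produce a positive $\vp_n \in W^{2,p}_\loc(\O_n)$, $\fa p<\infty$, satisfying $-L\vp_n = \lambda_n \vp_n$ in $\O_n$, using the classical Agmon-type construction: exhaust $\O_n$ from the inside by bounded smooth subdomains $D_{n,k}$, take the corresponding Dirichlet principal eigenfunctions (whose eigenvalues decrease to $\lambda_n$), normalize so that they take the value $1$ at a fixed $x_0 \in \O$, and pass to the limit in $k$ via interior Harnack together with $W^{2,p}$ estimates. Because $\O$ is an open subset of the open set $\O_n$, every compact $K \subset \O$ satisfies $\dist(K,\partial\O_n) \geq \dist(K,\partial\O) > 0$, so each $\vp_n$ solves the same equation (with bounded eigenvalue $\lambda_n$) on a uniform neighborhood of $K$. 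Chaining the Harnack inequality from $x_0$ then gives a uniform upper and lower bound on $\vp_n$ over $K$, and interior $W^{2,p}$ estimates with a diagonal procedure yield a subsequence converging in $W^{2,p}_\loc(\O)$ to a limit $\vp$ with $-L\vp = \lambda_\infty\vp$ in $\O$ and $\vp(x_0)=1$. The strong maximum principle then gives $\vp > 0$ in $\O$, so $\vp$ is admissible in \eq{l1} for $\O$ at the level $\lambda = \lambda_\infty$, yielding $\l(-L,\O) \geq \lambda_\infty$.

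The main obstacle is constructing and extracting limits of the intermediate eigenfunctions $\vp_n$: the domains $\O_n$ need not be smooth, so Krein--Rutman is unavailable, and the existence of a positive $\vp_n$ with eigenvalue exactly $\lambda_n$ requires going through the interior exhaustion together with uniform Harnack bounds independent of $k$. The hypotheses that $\O_1 \meno \O$ is bounded, that $\partial\O$ is smooth in a neighborhood of $\ol{\O_1 \meno \O}$, and that $\bigcap_n \ol{\O_n} = \ol{\O}$ serve to localize the perturbation to a well-behaved region and to prevent any degeneracy near the perturbed portion of the boundary when taking the limit $n \to \infty$; all the other estimates used are purely interior and need neither global smoothness of $\O_n$ nor global uniform ellipticity of $L$.
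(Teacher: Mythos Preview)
Your argument has a genuine gap: the second step does not prove the reverse inequality, it re-proves the easy one. You construct a positive $\vp\in W^{2,p}_\loc(\O)$ with $(L+\lambda_\infty)\vp=0$ in $\O$, and conclude that $\vp$ is admissible in \eq{l1} at level $\lambda_\infty$, hence $\l(-L,\O)\geq\lambda_\infty$. But that is exactly the inequality $\lambda_\infty\leq\l(-L,\O)$ already obtained by monotonicity; the hard direction is $\lambda_\infty\geq\l(-L,\O)$, and nothing in your construction gives it. The reason this strategy cannot work is precisely \thm{spettro}: when $\O$ is unbounded, \emph{every} $\lambda\leq\l(-L,\O)$ admits a positive eigenfunction (even one vanishing on $\partial\O$), so producing one at level $\lambda_\infty$ carries no information beyond $\lambda_\infty\leq\l(-L,\O)$. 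The paper flags exactly this obstruction in the paragraph preceding the proof. Note also that your argument never uses the hypotheses that $\O_1\backslash\O$ is bounded and that $\partial\O$ is smooth near $\ol{\O_1\backslash\O}$; since the result is known to fail without them, this is another sign that something is missing.

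What the paper does instead is argue by contradiction, assuming $\lambda^*:=\lim_n\l(-L,\O_n)<\l(-L,\O)$. In the bounded case it takes the generalized principal eigenfunctions $\vp_1^n$ on (smoothed versions of) $\O_n$, normalized by $\|\vp_1^n\|_\infty=1$, and uses the refined ABP estimate together with boundary $C^1$ estimates on the uniformly smooth perturbed boundary to show that the mass of $\vp_1^n$ must escape into the shrinking collar $\O_n\backslash\O$, forcing $\|\vp_1^n\|_\infty\to0$, a contradiction. In the unbounded case it combines this with a continuity-in-$r$ argument to pick bounded auxiliary domains with principal eigenvalue exactly equal to a fixed $\t\lambda\in(\lambda^*,\l(-L,\O))$, and compares the corresponding eigenfunctions with an eigenfunction on $\O$ via a Hopf-type lemma (Lemma~\ref{lem:Hopf}) and the refined MP to reach a contradiction. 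The essential missing ingredient in your proposal is control of the \emph{boundary behavior} of the $\vp_n$ near the perturbed portion of $\partial\O$; interior Harnack alone cannot close the argument.
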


In the above statement, it is understood that the coefficients of $L$
satisfy the hypotheses of Section \ref{sec:hyp} in $\O_1$ and not only in $\O$.
\thm{l1dec} is an important feature of the principal eigenvalue $\l$.
Contrary
to interior convergence of domains (cf.~Proposition \ref{l1} part (iv) below),
continuity with respect to exterior perturbations is a
subtle issue and it may possibly fail (see, e.g., \cite{Arrieta}, \cite{Daners}
for the case of bounded domains). We discuss several aspects of this
property in Section \ref{sec:domain}.

In Section \ref{sec:simplicity}, we discuss the existence of admissible
eigenfunctions for $\l'$, $\l''$, as  well as the simplicity of $\l$.
A sufficient condition for the latter is derived by
using the notion of solution of {\em minimal growth at infinity}. This is in
the spirit but a slightly different version of the notion 
introduced by S.~Agmon in his pioneering and important paper \cite{A1}.
Combining this condition with Theorems \ref{thm:spettro}, \ref{thm:MP}
and the characterization of $\l''$ given in \thm{equivalence}, we are able to
extend
the basic properties of the classical Dirichlet \pe\ to
the case of unbounded domains, provided that $c$ is negative at
infinity.

\begin{proposition}\label{pro:c<0}
Let $\O$ be unbounded and smooth and let
$$\xi:=\limsup_{\su{x\in\O}{|x|\to\infty}}c(x).$$ The following properties hold:

\begin{enumerate}[(i)]
\item if $\xi<0$ and \eq{ABC3} holds then $L$ satisfies the MP in $\O$
iff $\l(-L,\O)>0$;

\item if $\l(-L,\O)<-\xi$ then any positive function $v\in
W^{2,N}_{loc}(\O)$ satisfying $(L+\l(-L,\O))v\leq0$ a.e.~in $\O$ coincides, up
to a scalar multiple, with the eigenfunction $\vp_1$ associated with
$\l(-L,\O)$. Moreover, $\vp_1$ is bounded and, if the coefficients of $L$ are
bounded, it decays exponentially to $0$.

\end{enumerate}
\end{proposition}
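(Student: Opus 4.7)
I will address the two parts separately, combining Theorems \ref{thm:spettro}, \ref{thm:MP}, \ref{thm:SC}, \ref{thm:relations} with the simplicity results of Section \ref{sec:simplicity} and the characterization of $\l''$ in Theorem \ref{thm:equivalence}. The underlying idea is that, thanks to $\xi<0$, the constant function $1$ satisfies $(L+\lambda)\cdot 1=c+\lambda<0$ at infinity for every $\lambda<-\xi$, so that a supersolution of the $\l$-type (positive but possibly vanishing at $\partial\O$) can be turned into one of the $\l''$-type (bounded below by a positive constant) by a small constant perturbation.

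\emph{Part (i).} For the sufficient direction ($\l:=\l(-L,\O)>0\Rightarrow$ MP), pick $0<\lambda_0<\lambda_1<\min(\l,-\xi)$, which exist since $-\xi>0$. By the definition of $\l$ there is a positive $\phi\in W^{2,N}_{\loc}(\O)$ with $(L+\lambda_1)\phi\leq 0$, hence $(L+\lambda_0)\phi\leq-(\lambda_1-\lambda_0)\phi$. Setting $\psi:=\phi+\e$, one has $(L+\lambda_0)\psi\leq-(\lambda_1-\lambda_0)\phi+\e(c+\lambda_0)$; outside a large ball $B_R$, the right-hand side is negative since $c+\lambda_0<0$ there, while inside $\ol{B_R}\cap\O$ the same holds for $\e$ small, provided $\phi$ is uniformly bounded from below by a positive constant on this compact region (a point I come back to below). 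Since $\inf\psi\geq\e>0$ and $(L+\lambda_0)\psi\leq0$, this yields $\l''(-L,\O)\geq\lambda_0>0$, and Theorem \ref{thm:MP}(i) delivers the MP. For necessity, Theorem \ref{thm:MP}(ii) and Theorem \ref{thm:relations}(ii) give $\l\geq\l'\geq 0$. To exclude $\l=0$, suppose the contrary: Theorem \ref{thm:spettro} then produces a positive eigenfunction $\vp$ with $L\vp=0$ in $\O$ and $\vp=0$ on $\partial\O$. Since $0=\l\leq-\xi$, Theorem \ref{thm:SC} case 2 yields $\l''(-L,\O)=0$, and via Theorem \ref{thm:equivalence} one obtains, for arbitrarily small $\delta>0$, supersolutions $\psi_\delta$ of $L-\delta$ bounded below by a positive constant. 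A comparison of $\vp$ with $M\psi_\delta$ shows that $\vp$ is bounded, and then the MP applied to $\vp$ forces $\vp\leq 0$, a contradiction.

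\emph{Part (ii).} Assume $\l<-\xi$. Theorem \ref{thm:spettro} produces the positive Dirichlet eigenfunction $\vp_1$ with $-L\vp_1=\l\vp_1$ and $\vp_1=0$ on $\partial\O$. Theorem \ref{thm:SC} case 2 applies (since $\l\leq-\xi$) and gives $\l=\l''$, so that for every $\lambda<\l$ there exists a positive $\psi_\lambda\in W^{2,N}_{\loc}(\O)$ with $\inf\psi_\lambda>0$ and $(L+\lambda)\psi_\lambda\leq 0$. The simplicity criterion developed in Section \ref{sec:simplicity}, formulated in the spirit of Agmon's notion of solutions of minimal growth at infinity, then forces any positive $v\in W^{2,N}_{\loc}(\O)$ with $(L+\l)v\leq 0$ to be a scalar multiple of $\vp_1$: here the strict inequality $\l<-\xi$ furnishes the comparison barriers $\psi_\lambda$ needed in the minimal-growth argument. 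The boundedness of $\vp_1$ is an immediate by-product of this uniqueness, by comparison with a bounded $\psi_\lambda$. For the exponential decay under bounded coefficients, one constructs a barrier $\chi(x):=e^{-\alpha|x|}$ which, for $\alpha>0$ small enough, satisfies $(L+\l)\chi\leq 0$ outside a compact set (thanks to $c+\l<-\e_0<0$ at infinity), and the same comparison then yields $\vp_1\leq C\chi$.

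\emph{Main obstacles.} The two most delicate points are (a) the construction, in the sufficiency of (i), of a supersolution $\phi$ for $\l$ which is bounded below by a positive constant on $\ol{B_R}\cap\O$, a subtle issue near $\partial\O$ where $\phi$ could otherwise vanish and which I plan to handle by choosing $\phi$ as an appropriate extension of the Dirichlet principal eigenfunction of a large smooth subdomain of $\O$; and (b) the boundedness of the eigenfunction $\vp$ in the necessity of (i), which must be derived from $\l''=0$ via comparison with uniformly positive supersolutions---a step that hinges crucially on the characterization of $\l''$ in Theorem \ref{thm:equivalence}, since at the borderline value $\l=0$ the supremum in the definition of $\l''$ need not be attained.
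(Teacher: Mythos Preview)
Your proposal identifies the right circle of results (Theorems \ref{thm:MP}, \ref{thm:equivalence}, \ref{thm:SC}, \ref{thm:mgi}, Proposition \ref{pro:mgi}, Proposition \ref{pro:decay}), but two steps do not go through as written, and the paper handles them differently.

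\textbf{Sufficiency in (i).} Your hands-on construction $\psi=\phi+\e$ runs into precisely the obstacle you flag: the admissible $\phi$ for $\l$ may vanish on $\partial\O$, so $(\lambda_1-\lambda_0)\phi$ need not dominate $\e(c+\lambda_0)$ on $\ol\O\cap\ol{B_R}$ however small $\e$ is. Your proposed fix (``an appropriate extension of the Dirichlet principal eigenfunction of a large smooth subdomain'') is exactly Lemma \ref{lem:infT>0}, which is the heart of the proof of Theorem \ref{thm:equivalence}. So you are re-proving Theorem \ref{thm:equivalence} by hand. The paper simply invokes it: since $\lim_{r\to\infty}\l''(-L,\O\backslash\ol{B_r})\geq -\xi>0$ (by $\l''\geq-\sup c$), Theorem \ref{thm:equivalence} gives $\l''(-L,\O)=\min(\l,\lim_r\l'')>0$, and Theorem \ref{thm:MP}(i) concludes. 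You already use Theorem \ref{thm:SC} case 2 in part (ii); use it here too.

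\textbf{Boundedness of $\vp_1$.} This is the real gap, and it affects both the necessity in (i) and part (ii). Your argument ``comparison with a bounded $\psi_\lambda$'' is circular: the $\psi_\lambda$ coming from $\l''=\l$ satisfy $\inf\psi_\lambda>0$, but nothing says $\sup\psi_\lambda<\infty$, so $\vp_1\leq M\psi_\lambda$ (even if you could prove it) does not give boundedness. The paper's route is the one you allude to in your opening paragraph but then abandon: once $\l<-\xi$, one has $\lim_{r\to\infty}\l(-L,\O\backslash\ol{B_r})\geq -\xi>\l$, so Theorem \ref{thm:mgi} says $\vp_1$ has minimal growth at infinity. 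Since the constant $v\equiv1$ satisfies $(L+\l)v=c+\l<0$ outside a large ball, Definition \ref{def:mgi} yields $k\vp_1\leq1$ there; hence $\vp_1$ is bounded. This single observation simultaneously (a) gives boundedness in (ii), and (b) settles necessity in (i) without splitting into $\l>0$/$\l=0$: if $\l\leq0$ then $\l<-\xi$, the bounded $\vp_1$ satisfies $L\vp_1=-\l\vp_1\geq0$, $\vp_1=0$ on $\partial\O$, $\vp_1>0$, so the MP fails. Your detour through $\l'\geq0$ and Theorem \ref{thm:relations}(ii) is unnecessary.

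With these two corrections your sketch becomes the paper's proof. The exponential decay via Proposition \ref{pro:decay} and the simplicity via Proposition \ref{pro:mgi} are as you describe.
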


Actually, hypothesis \eq{ABC3} is not required in the ``only if'' implication of
statement (i). If $\xi=0$  then $\l(-L,\O)>0$ does not imply the MP,
even if $c<0$ everywhere (see Remark \ref{rem:c<0}).
Statement (ii) was announced
and used in our previous paper \cite{BR3}.
There, we dealt with Neumann problems in
smooth infinite cylinders. This led us to define a notion of generalized \pe\
which incorporates the Neumann boundary condition. However, the proof presented
below works
exactly at the same way in that case.
The last statement of Proposition \ref{pro:c<0} part (ii) follows from a general
result about the exponential decay of subsolutions of the
Dirichlet problem - Proposition \ref{pro:decay} below.
\\

We conclude by investigating the continuity
of $\l$ with respect to the coefficients, as well as its behavior as
the size of the zero and the second order coefficients blows up or the
ellipticity degenerates.
\\

Let us point out that some of the results concerning
$\l$ and $\l''$ still hold if
$\O$ is not connected. This is seen by
noticing that $\l(-L,\O)$ and $\l''(-L,\O)$ coincide with the infimum
of the $\l$ and $\l''$ in the connected components of $\O$. Exceptions
are: the results about the existence of eigenfunctions, such as \thm{spettro},
the implication MP$\,\Rightarrow\l>0$ in Proposition \ref{pro:c<0} part (i)
(unless $\O$ has
a finite number of connected components). 
Note that $\l'$ is equal to the supremum of the $\l'$ in the
connected components of $\O$. We further remark that, if $\O$ is connected, the
definition \eq{l1} of $\l$ does not change if one replaces $\phi>0$ with
$\phi\geq0$, $\phi\not\equiv0$. This is no longer true if $\O$ is not connected.

As was already mentioned above, most of the results of this paper can be 
extended to the case of linear
elliptic equations on noncompact manifolds. There are only few points, such as
condition \eq{ABC3},
where the volume growth of balls and other properties of $\R^N$ 
are used and need to be adapted to this more general setting.


\section{Preliminary considerations on the definitions and assumptions}


\subsection{Exploring other possible definitions}\label{sec:other}

To start with, we address the question of what happens if one enlarges the
class of admissible functions in definition \eq{l1''}.
For $\e>0$, we set
$$\O^\e:=\{x\in\O\ :\ \dist(x,\partial\O)>\e\}.$$

\begin{proposition}\label{pro:l1''=}
Let $\O$ be uniformly of class $C^{2,1}$ and $L$ be a uniformly
elliptic operator with $a_{ij}$, $b_i$ bounded and $c$ bounded from above.
Then, the quantity
$\l''(-L,\O)$ defined by \eq{l1''} satisfies
$$\l''(-L,\O)=\sup\{\lambda\ :\ \ex\phi\in
W^{2,N}_\loc(\O),\fa\e>0,\ \inf_{\O^\e}\phi>0,\ (L+\lambda)\phi\leq0\text{
a.e.~in }\O\}.$$
\end{proposition}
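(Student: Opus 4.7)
Denote $\lambda'':=\lambda''(-L,\O)$ (the quantity defined by \eq{l1''}), and let $\Lambda$ stand for the supremum on the right-hand side of the identity to be proved. The inequality $\lambda''\le\Lambda$ is trivial, since any $\phi$ admissible for $\lambda''$ (with $\inf_\O\phi>0$) satisfies $\inf_{\O^\e}\phi\ge\inf_\O\phi>0$ for every $\e>0$ and is thus admissible for $\Lambda$. The content of the proposition is the reverse inequality $\Lambda\le\lambda''$, and I plan to establish it by an additive barrier correction.

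Fix $\lambda<\lambda'<\Lambda$ and a positive $\phi\in W^{2,N}_{\loc}(\O)$ with $\inf_{\O^\e}\phi>0$ for every $\e>0$ and $(L+\lambda')\phi\le0$. I will produce an admissible function for $\lambda''$ at level $\lambda$ in the form $\tilde\phi:=\phi+\delta\psi$, with $\delta>0$ small and $\psi$ a positive bounded function having $\inf_\O\psi>0$. Since $(L+\lambda)\phi\le-(\lambda'-\lambda)\phi$, one has
$$(L+\lambda)\tilde\phi\;\le\;-(\lambda'-\lambda)\phi+\delta(L+\lambda)\psi,$$
so it will suffice for $\psi$ to satisfy $(L+\lambda)\psi\le0$ on a neighborhood of $\partial\O$ (where $\phi$ may decay to $0$) and to have $(L+\lambda)\psi$ merely bounded above elsewhere --- on each $\O^\e$ the strictly positive term $(\lambda'-\lambda)\inf_{\O^\e}\phi$ will then absorb $\delta(L+\lambda)\psi$ once $\delta$ is small enough in terms of $\e$. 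To build $\psi$ I use the uniform $C^{2,1}$ hypothesis: $d(x):=\dist(x,\partial\O)$ is $C^{2,1}$ on a uniform tubular neighborhood $\{d<\e^*\}$ with $|\nabla d|=1$ there. For $M>1$ and $\alpha$ large (to be fixed), set $\psi(x):=M-e^{-\alpha d(x)}$ on $\{d\le1/\alpha\}$, cut off smoothly to the constant $M-e^{-1}$ on $\{1/\alpha<d<2/\alpha\}$, and take $\psi\equiv M-e^{-1}$ further inside; then $\psi\in W^{2,N}_{\loc}(\O)$ and $M-1\le\psi\le M$. On the strip $\{d\le1/\alpha\}$ a direct computation gives
$$(L+\lambda)\psi\;=\;e^{-\alpha d}\bigl[-\alpha^2 A+\alpha B\bigr]+(c+\lambda)\bigl(M-e^{-\alpha d}\bigr),$$
with $A:=a_{ij}d_id_j\ge\ul\alpha>0$ by uniform ellipticity and $B:=a_{ij}d_{ij}+b_id_i$ bounded by the uniform $C^{2,1}$ regularity of $\partial\O$ together with boundedness of $a_{ij},b_i$. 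Because $\sup_\O c<\infty$, the second summand is bounded above, and on the strip $e^{-\alpha d}\ge e^{-1}$, so for $\alpha$ large the $\alpha^2$-term dominates and $(L+\lambda)\psi\le0$ throughout the strip. Outside the strip, $(L+\lambda)\psi$ is evidently bounded above.

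\textbf{Main obstacle.} The crux is precisely this global barrier construction: one needs a single positive $\psi$ with $\inf_\O\psi>0$ that is a genuine supersolution near $\partial\O$ \emph{without} any sign assumption on $c$. The ansatz $M-e^{-\alpha d}$ succeeds only because the $\alpha^2$-negative contribution coming from $A>0$ beats the $(\sup c+\lambda)M$ contribution, and it does so on a strip whose width ($\sim1/\alpha$) must shrink with $\alpha$. This is exactly where the three uniform hypotheses --- ellipticity, boundedness of the $a_{ij},b_i$, and uniform $C^{2,1}$ regularity of $\partial\O$ --- are essential, guaranteeing that the constants in $A$, $B$, and the tubular radius $\e^*$ are independent of $x$. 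Once $\psi$ is in hand, choosing $0<\delta<(\lambda'-\lambda)\inf_{\O^{1/\alpha}}\phi\,/\,\sup_\O(L+\lambda)\psi$ yields $(L+\lambda)\tilde\phi\le0$ on all of $\O$ and $\inf_\O\tilde\phi\ge\delta(M-1)>0$, so $\tilde\phi$ witnesses $\lambda\le\lambda''$; letting $\lambda\nearrow\Lambda$ concludes.
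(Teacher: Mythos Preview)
Your proof is correct and follows essentially the same strategy as the paper's: add to $\phi$ a boundary-layer barrier built from the distance function $d$, exploit the strict gap $\lambda<\lambda'$ (the paper reduces to $\tilde\lambda<0$) to absorb the error away from the boundary, and use the uniform $C^{2,1}$ regularity together with uniform ellipticity and boundedness of $a_{ij},b_i$ to make the barrier a supersolution on a fixed strip. The only cosmetic difference is the choice of barrier: the paper takes $w=\cos(kd)\,\chi(d/\delta)$, which is positive near $\partial\O$ and cut off to zero inside, and sets $\tilde\phi=h\phi+w$ with $h$ large; you take $\psi=M-e^{-\alpha d}$ extended to a positive constant inside, and set $\tilde\phi=\phi+\delta\psi$ with $\delta$ small. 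Both work for the same reason --- the second derivative of the barrier in the normal direction dominates on a thin strip --- and the two formulations are equivalent up to rescaling.
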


\begin{proof}
To prove the statement it is sufficient to show that if $\lambda\in\R,\
\phi\in W^{2,N}_\loc(\O)$ satisfy
$$\fa\e>0,\quad \inf_{\O^\e}\phi>0,\qquad (L+\lambda)\phi\leq0\text{
a.e.~in }\O,$$ then every $\t\lambda<\lambda$ belongs to the set in
\eq{l1''}. We can assume without loss of generality that
$\lambda=0$, so that $\t\lambda<0$, and that $\O\neq\R^N$. For $x\in\ol\O$, set
$d(x):=\dist(x,\O)$. Since $\O$ is uniformly of class $C^{2,1}$, we know from
\cite{LN} that, for $\e>0$ small enough, the distance function $d$ belongs to
$W^{2,\infty}(\O\backslash\O^\e)$. Furthermore, $|\nabla d|=1$ in 
$\O\backslash\O^\e$.
Define the function $v(x):=\cos(kd(x))$, where $k$ is a positive constant that
will be chosen later. For a.e.~$x\in\O\backslash\O^\e$ it holds that
\[\begin{split}
Lv &=-ka_{ij}(x)[kv\partial_i d\partial_j
d+\sin(kd)\partial_{ij}d]
-k\sin(kd)b_i(x)\partial_i d+c(x)v\\
&\leq(-k^2\ul\alpha(x)+c(x))v+(Ck+k|b(x)|)|\sin(kd)|,
\end{split}\]
where $C=\sum_{i,j}\norma{a_{ij}\partial_{ij}d}$.
Hence, since $v\geq|\sin(kd)|$
in $\O\backslash\O^{\frac\pi{4k}}$, setting $\delta:=\min(\e,\frac\pi{4k})$ we
get
$$Lv\leq(-k^2\inf_\O\ul\alpha+\sup_\O c+Ck+k\sup_\O|b|)v\quad\text{a.e.~in
}\O\backslash\O^\delta.$$
It is then possible to choose $k>0$ in such a way that $Lv\leq0$
a.e.~in $\O\backslash\O^\delta$. Let $\chi:\R\to[0,+\infty)$ be a
smooth cutoff function satisfying
$$\chi=1\quad\text{in }[0,1/2],\qquad
\chi=0\quad\text{in }[1,+\infty).$$
Then, for $x\in\O$, define $w(x):=v(x)\chi(\frac1\delta d(x))$.
The function $w$ is nonnegative, smooth, belongs to $W^{2,\infty}(\O)$, vanishes
on $\ol\O^\delta$
and it satisfies
$$\inf_{\O\backslash\O^{\delta/2}}w>0,\qquad
Lw\leq0\quad\text{a.e.~in }\O\backslash\O^{\delta/2}.$$ We
finally set $\t\phi(x):=h\phi(x)+w(x)$, for some positive constant
$h$. This function satisfies
$$\inf_\O\t\phi\geq\min\left(h\inf_{\O^{\delta/2}}\phi,\inf_{\O\backslash\O^{
\delta/2}}w\right)>0,\qquad
L\t\phi\leq0\quad\text{a.e.~in }\O\backslash\O^{\delta/2}.$$
Moreover, for a.e.~$x\in\O^{\delta/2}$,
$$(L+\t\lambda)\t\phi\leq h\t\lambda\phi+(L+\t\lambda)w.$$
Therefore, for $h$ large enough, we have that
$(L+\t\lambda)\t\phi\leq0$ a.e.~in $\O$. This shows that
$\t\lambda$ belongs to the set in \eq{l1''}.
\end{proof}

The above proof leads us to formulate the following. 

\begin{open}
Does the result of Proposition \ref{pro:l1''=} hold true if one drops the
uniform ellipticity
and boundedness of the coefficients of $L$ ?
\end{open}

Starting from definition \eq{l1}, one could define several quantities
by replacing ``$\sup$'' with ``$\inf$'', $(L+\lambda)\phi\leq0$ with
$(L+\lambda)\phi\geq0$ as well as by adding the conditions
$\sup\phi<\infty$ or $\inf\phi>0$ (or, more generally,
$\sup\frac\phi\beta<\infty$ or $\inf\frac\phi\beta>0$ for a given barrier
function $\beta$).
Let us explain why we focus on the ones in Definition \ref{def:l1} and their
extensions with a barrier function $\beta$.

First of all, it is clear that if $c\in L^\infty(\O)$ then
replacing $\sup$ with
$\inf$ in definition \eq{l1} gives $-\infty$, whereas taking
$(L+\lambda)\phi\geq0$ instead of $(L+\lambda)\phi\leq0$ gives $+\infty$.
This is true even if one adds the conditions
$\sup\phi<\infty$ or $\inf\phi>0$.

Two other possibilities are thus left.
$$\t{\lambda}_1'(-L,\O):=\sup\{\lambda\ :\ \ex\phi\in
W^{2,N}_\loc(\O)\cap L^\infty(\O),\ \phi>0,\
(L+\lambda)\phi\leq0\text{ a.e.~in }\O\},$$
$$\t{\lambda}_1''(-L,\O):=\inf\{\lambda\ :\ \ex\phi\in
W^{2,N}_\loc(\O),\ \inf_\O\phi>0,\ (L+\lambda)\phi\geq0\text{
a.e.~in }\O\}.$$

One can show that, if $L$ is a uniformly elliptic operator with bounded
coefficients, then $\t{\lambda}_1''(-L,\O)=-\infty$.
Instead, if $c$ is bounded from above, the quantity $\t{\lambda}_1'(-L,\O)$
is a well defined real number satisfying
$-\sup_\O c\leq\t{\lambda}_1'(-L,\O)\leq\l(-L,\O)$.
However, its sign is not related to the validity of the MP.
This is seen, by means of \thm{MP}, considering the operator $L$ defined
in Counter-example \ref{c-e}, that satisfies
$$\l''(-L,\R)\leq\l'(-L,\R)<0<\l(-L,\R)=\t{\lambda}_1'(-L,\R).$$
%
%



\subsection{Previously known properties of $\l$ and $\l'$}\label{sec:known}

In this section, we present some known properties of $\l$ and
$\l'$. We recall that, for a bounded smooth domain $\O$, $\lambda_\O$
denotes the classical \pe\ of $-L$ in $\O$ under Dirichlet boundary conditions.

\begin{proposition}\label{l1}
The generalized \pe\ $\l(-L,\O)$ defined by \eq{l1} satisfies the following
properties:
\begin{enumerate}[(i)]

\item
if $\O$ is bounded and smooth then $\l(-L,\O)=\lambda_\O$;

\item
$$-\sup_\O c\leq\lambda_1(-L,\O)\leq Cr^{-2},$$
where $0<r\leq1$
is the radius of some ball $B$ contained in $\O$ and $C>0$ only depends
on $N$, $\inf_B\ul\alpha$ and the $L^\infty(B)$ norms of $a_{ij}$,
$b_i$, $c$;

\item
if $\O'\subset\O$ then
$\lambda_1(-L,\O')\geq\lambda_1(-L,\O)$, with strict inequality if
$\O'$ is bounded and $|\O\backslash\O'|>0$;

\item
if $(\O_n)_{n\in\N}$ is a family of nonempty domains
such that
$$\O_n\subset\O_{n+1},\qquad\bigcup_{n\in\N}\O_n=\O,$$
then $\lambda_1(-L,\O_n)\searrow\lambda_1(-L,\O)$ as
$n\to\infty$;

\item
if $\l(-L,\O)>-\infty$ then
there exists a positive function $\varphi\in
W^{2,p}_\loc(\O)$, $\fa p<\infty$, satisfying
\Fi{pef}
-L\vp=\l(-L,\O)\vp\quad\text{a.e.~in }\O;
\Ff

\item
if $L$ is self-adjoint then \Fi{RR}
\lambda_1(-L,\O)=\inf_{\su{\phi\in C^1_c(\O)}{\phi\not\equiv0}}
\frac{\int_\O(a_{ij}(x)\partial_i\phi\partial_j\phi-c(x)\phi^2)}
{\int_\O\phi^2},\Ff
where $C^1_c(\O)$ denotes the space of compactly supported, $C^1$ 
functions in $\O$. 
In particular, $\l(-L,\O)$ is nondecreasing with
respect to the matrix $(a_{ij})$;

\item
in its dependence on $c$, $\l(-L,\O)$ is
nonincreasing (i.~e.~$h\geq0$ in $\O$ implies $\l(-(L+h),\O)\leq\l(-L,\O)$),
concave and Lipschitz-continuous (using the $L^\infty$ norm) with
Lipschitz constant 1;

\item
for uniformly elliptic operators with bounded coefficients,
$\l(-L,\O)$ is locally Lipschitz-continuous with respect to the
$b_i$, with Lipschitz constant depending only on $N,\ \O$, the
ellipticity constants and the $L^\infty$ norm of $c$.

\end{enumerate}
\end{proposition}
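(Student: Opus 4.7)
The plan is to dispatch the structural parts first and concentrate the real work on the existence statement (v), from which several other assertions then follow painlessly. Monotonicity in (iii) is immediate from the definition \eq{l1}: any admissible pair $(\lambda,\phi)$ for $\O$ restricts to an admissible pair for $\O'$. The lower bound in (ii) comes from taking $\phi\equiv1$, which is admissible whenever $\lambda\leq-\sup_\O c$. The upper bound in (ii) follows by combining (iii) with the estimate $\lambda_B\leq Cr^{-2}$ for the classical eigenvalue of $-L$ on a small ball $B\subset\O$, which is obtained by a standard rescaling from the constant-coefficient eigenvalue of the Laplacian on a ball. Parts of (vii) fall out by rewriting \eq{l1} as $\l(-L,\O)=\sup_\phi\essinf_\O(-L\phi/\phi)$ on positive test functions: increasing $c$ by $h\geq0$ decreases $-L\phi/\phi$ pointwise by $h$ (hence $\l$ is nonincreasing in $c$ with Lipschitz constant $1$), and since $c\mapsto-L\phi/\phi$ is affine, the supremum is concave.

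For (v), I would exhaust $\O$ by bounded smooth subdomains $\O_n\nearrow\O$ with $\ol{\O_n}\subset\O_{n+1}$, and apply Krein-Rutman on each $\O_n$ to get positive eigenfunctions $\vp_n\in W^{2,p}(\O_n)$ with eigenvalues $\lambda_n=\lambda_{\O_n}$. By (iii) the sequence $\lambda_n$ decreases and is bounded below by $\l(-L,\O)$; conversely, any admissible $\lambda$ for $\O$ is admissible for each $\O_n$, giving $\lambda_n\searrow\l(-L,\O)$, which already supplies one half of (iv). Fix $x_0\in\O_1$ and normalize $\vp_n(x_0)=1$. The Harnack inequality applied on each compact $K\subset\O$ to the equation $(-L-\lambda_n)\vp_n=0$ (whose coefficients are locally bounded and whose leading part is locally uniformly elliptic) gives uniform two-sided bounds on $\vp_n|_K$; interior $W^{2,p}$ estimates then extract a subsequence converging in $C^1_\loc(\O)$ to a positive $\vp\in W^{2,p}_\loc(\O)$ satisfying \eq{pef}. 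The remaining half of (iv) follows by using the eigenfunction provided by (v) for $\O$ as a test function on each $\O_n$, which shows $\l(-L,\O_n)\geq\l(-L,\O)$ and hence the reverse bound needed for $\lambda_n\to\l(-L,\O)$.

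Property (i) now reads off quickly: on bounded smooth $\O$ the classical eigenfunction is admissible in \eq{l1}, so $\l\geq\lambda_\O$, while the \SMP\ associated with $\lambda_\O$ (comparing the classical eigenfunction to any candidate strict supersolution on the same domain) forbids admissible $\lambda>\lambda_\O$. The strict inequality in (iii) when $\O'$ is bounded with $|\O\setminus\O'|>0$ then follows from (i), (v) and the \SMP: the classical eigenfunction on $\O'$ cannot coincide, up to scaling, with the restriction of a positive eigenfunction on the strictly larger $\O$ at the same eigenvalue. For the Rayleigh formula (vi) I would first invoke the standard variational characterization for the self-adjoint problem on bounded smooth subdomains, then pass to the limit along $\O_n\nearrow\O$ using (iv): any $\phi\in C^1_c(\O)$ is eventually supported in some $\O_n$, which makes the infimum in \eq{RR} monotone under the exhaustion. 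Monotonicity of $\l$ in $(a_{ij})$ is then read off directly from \eq{RR}.

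Finally, for (viii), given two operators $L,L'$ differing only in their drifts, I would use the representation $\l(-L,\O)=\sup_\phi\essinf(-L\phi/\phi)$ with a near-maximizing $\phi$ normalized so that Harnack and interior $W^{2,p}$ bounds produce uniform control of $|\nabla\phi|/\phi$; then $|(-L\phi+L'\phi)/\phi|\leq|b-b'||\nabla\phi|/\phi$ yields the Lipschitz estimate with the stated dependence. The main obstacle is really (v): passing to the limit in the exhaustion demands that the Harnack constants stay controlled as $n$ grows and as $\lambda_n$ varies, which is fine here only because the coefficients are locally bounded and locally uniformly elliptic — a point that has to be handled carefully when $(a_{ij})$ is allowed to degenerate at infinity. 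Everything else is then more or less a consequence of this compactness step together with the variational and monotonicity identities above.
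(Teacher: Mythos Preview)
The paper does not actually prove Proposition~\ref{l1}: it assembles the statements from the literature, citing \cite{Max}, \cite{BNV} and \cite{A1} for each part. Your proposal essentially reconstructs the standard arguments found in those references (exhaustion by smooth subdomains, Krein--Rutman, Harnack plus interior $W^{2,p}$ compactness for (v); the Rayleigh--Ritz formula on bounded domains passed to the limit via (iv) for (vi); the sup--inf representation for (vii) and (viii)), and in that sense it is both correct in outline and aligned with what the paper invokes.

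There is one genuine slip in your assembly of (iv). You write that ``any admissible $\lambda$ for $\O$ is admissible for each $\O_n$, giving $\lambda_n\searrow\l(-L,\O)$'', and later that using the eigenfunction from (v) on each $\O_n$ ``shows $\l(-L,\O_n)\geq\l(-L,\O)$ and hence the reverse bound''. Both of these only reproduce (iii), i.e.\ the lower bound $\lim\lambda_n\geq\l(-L,\O)$; neither gives the missing inequality. The correct closure is already implicit in your construction for (v): the limit function $\vp$ you extract satisfies $-L\vp=\mu\vp$ with $\mu=\lim\lambda_n$, so $\mu$ itself is admissible in \eq{l1}, forcing $\l(-L,\O)\geq\mu$ and hence equality. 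Once you phrase it that way, (iv) and (v) come out together (this is exactly how it is done in \cite{A1}). For (viii) your sketch is right in spirit but should use the eigenfunction from (v) rather than a ``near-maximizing $\phi$'': it is for an exact positive solution that Harnack plus interior gradient estimates give the uniform control of $|\nabla\phi|/\phi$ you need.
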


The above properties, in particular (i), motivate the terming of ``generalized
\pe''. Property (i) can be
deduced from a mini-max formula in \cite{Max}. The
upper bound
in (ii) is Lemma 1.1 of \cite{BNV}.
The lower bound follows immediately from the definition,
as does the inequality $\geq$ in (iii).
The strict inequality in the case of bounded $\O'$ is given by
Theorem 2.4 of \cite{BNV} (and it actually holds in greater generality,
cf.~Remark \ref{rem:strict} below).
The proofs of (iv), (v) can be found in \cite{A1} in the
case of operators with smooth coefficients (see also \cite{BNV} for general
operators in bounded, non-smooth domains), but the same arguments apply to the
general case.
We point out that if $\O$ is smooth then \thm{spettro} above
is a much stronger result than Proposition \ref{l1} part (v),
providing in particular a function $\vp$ satisfying in addition
the Dirichlet boundary condition.
Property (vi) follows from (i), (iv) and the Rayleigh-Ritz
variational formula for the classical Dirichlet \pe, as shown in
\cite{A1}.
Properties (vii) and (viii) are respectively Propositions 2.1 and 5.1 in
\cite{BNV}.

%

\begin{remark}\label{rem:l1monot}
The monotonicity of $\l$ with respect to $c$ is strict if $\O$
is bounded, whereas it might not be the case if $\O$ is unbounded (see
the proof of Proposition \ref{pro:nopef} below for an example).
Likewise, the decreasing monotonicity with respect to the domain given by
Proposition
\ref{l1} part (iii)
might not be strict in the case of unbounded domains. For example, in dimension
1, the operator $Lu=u''$ clearly satisfies $\l(-L,\R)=\l(-L,\R_+)=0$.
\end{remark}

The generalized \pe\ $\l'(-L,\O)$
also coincides with the Dirichlet
\pe\ $\lambda_\O$ if $\O$ is bounded and smooth.
Moreover, it coincides with the periodic \pe\
$\lambda_p$ under Dirichlet boundary conditions if
$\O$ is smooth and $\O$ and $L$ are periodic with the same period. We
say that $\O$ is periodic, with period $(l_1\pp l_N)\in\R_+^N$,
if $\O+\{l_i e_i\}=\O$ for $i=1\pp N$, where $\{e_1\pp e_N\}$ is the
canonical basis of $\R^N$; the operator $L$ is said to be periodic, with period
$(l_1\pp l_N)$, if its coefficients are periodic with the same period $(l_1\pp
l_N)$. We recall that
$\lambda_p$ is the unique real number $\lambda$ such that the problem
\eq{Dev} admits a positive periodic solution. Such a solution, which is
unique up to a multiplicative constant, is called periodic principal
eigenfunction.

\begin{proposition}\label{l1'}
The generalized \pe\ $\l'(-L,\O)$ defined by \eq{l1'} satisfies the following
properties:
\begin{enumerate}[(i)]
\item
if $\O$ is bounded and smooth then
$\l'(-L,\O)=\lambda_\O$;

\item
if $\O$ is smooth then $\l'(-L,\O)<+\infty$ and,
if in addition \eq{ABC3} holds, then $\l'(-L,\O)\in\R$;

\item
if $\O$ is smooth and $\O,\ L$ are periodic, with the same period, then
$\lambda'_1(-L,\O)=\lambda_p$.
\end{enumerate}
\end{proposition}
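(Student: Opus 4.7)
The three parts will be tackled in order, with (iii) being the main obstacle.

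For (i), the classical Dirichlet principal eigenfunction $\vp_\O\in W^{2,p}(\O)$ (for every $p<\infty$) is positive in $\O$, bounded, vanishes on $\partial\O$ and satisfies $(L+\lambda_\O)\vp_\O=0$; it is therefore admissible as test function in \eq{l1'} at $\lambda=\lambda_\O$, which gives $\l'(-L,\O)\leq\lambda_\O$. For the reverse inequality, I plan to use the adjoint Dirichlet principal eigenfunction $\vp_\O^*>0$ of $L^*$ (same eigenvalue $\lambda_\O$ by Krein--Rutman). Given any admissible pair $(\lambda,\phi)$ for $\l'$, I would multiply $(L+\lambda)\phi\geq 0$ by $\vp_\O^*$, integrate over $\O_\e:=\{x\in\O\ :\ \dist(x,\partial\O)>\e\}$, apply Green's identity, and let $\e\to 0^+$: since $\phi,\vp_\O^*\to 0$ on the smooth $\partial\O$ with locally bounded normal derivatives, the boundary integrals vanish and we are left with $(\lambda-\lambda_\O)\int_\O\phi\vp_\O^*\geq 0$, forcing $\lambda\geq\lambda_\O$.

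For (ii), finiteness from above follows by composing Theorem \ref{thm:relations}(ii), which yields $\l'(-L,\O)\leq\l(-L,\O)$, with Proposition \ref{l1}(ii), which bounds $\l$ above by $Cr^{-2}$ using any ball $B_r\subset\O$ (such a ball exists since $\O$ is a nonempty open set). Under hypothesis \eq{ABC3} we have $\sup_\O c<\infty$, so the constant function $\phi\equiv 1$ is admissible for $\l''(-L,\O)$ at $\lambda=-\sup_\O c$, giving $\l''(-L,\O)\geq-\sup_\O c>-\infty$; Theorem \ref{thm:relations}(iii), which applies under \eq{ABC3}, then yields $\l'(-L,\O)\geq\l''(-L,\O)>-\infty$.

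For (iii), the periodic principal eigenfunction $\vp_p$ is positive in $\O$, periodic (hence bounded), vanishes on the smooth $\partial\O$, and satisfies $(L+\lambda_p)\vp_p=0$, so admissibility in \eq{l1'} at $\lambda_p$ gives $\l'(-L,\O)\leq\lambda_p$. The reverse inequality $\l'(-L,\O)\geq\lambda_p$ is the main technical obstacle. My plan is as follows: given admissible $(\lambda,\phi)$ for $\l'$, translate $\phi$ by integer multiples of the period along a sequence $k_n\in\Z^N$ chosen so that the translated values stay bounded below by a positive constant at some reference point in a fixed fundamental cell (for instance by selecting $k_n$ so that the translates approach $\sup_\O\phi$ at points staying in this cell); by local elliptic estimates and Arzel\`a--Ascoli extract a locally uniform limit $\phi_\infty$, a positive bounded function on $\O$ satisfying $(L+\lambda)\phi_\infty\geq 0$ and $\phi_\infty=0$ on $\partial\O$, whose supremum is attained. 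A Ces\`aro average of $\phi_\infty$ over integer periods is still a positive bounded supersolution on $\O$ and, along a subsequence, converges to a positive bounded \emph{periodic} supersolution $\psi$ of $(L+\lambda)$ vanishing on $\partial\O$. Such a $\psi$ descends to the compact quotient $\O/\text{periodicity}$, where Krein--Rutman characterizes $\lambda_p$ as the smallest $\lambda$ admitting a positive Dirichlet supersolution, forcing $\lambda\geq\lambda_p$ and completing (iii).
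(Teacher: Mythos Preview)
Part (ii) is correct and matches the paper. For (i) and (iii) there are genuine gaps; the paper instead derives all three parts from the sandwich $\l''\leq\l'\leq\l$ of Theorem~\ref{thm:relations} combined with $\l=\l''=\lambda_\O$ (bounded smooth case, Propositions~\ref{l1}(i) and \ref{pro:l1''}(i)) and $\l''=\lambda_p$ (periodic case, Proposition~\ref{pro:l1''}(iv)). In your argument for (i), the adjoint/Green's-identity route fails under the paper's standing hypotheses: $L$ is in non-divergence form with only $a_{ij}\in C^0(\ol\O)$ and $b_i\in L^\infty$, so the formal adjoint $L^*v=\partial_{ij}(a_{ij}v)-\partial_i(b_iv)+cv$ is not a classical operator, and the dual Krein--Rutman eigenvector is in general only a measure, not a function against which you can integrate by parts. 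The correct direct argument is simply the maximum principle: if $\lambda<\lambda_\O$ then $\l(-(L+\lambda),\O)>0$, and any admissible $\phi$ at level $\lambda$ is forced to satisfy $\phi\leq 0$, a contradiction---this is exactly what Theorem~\ref{thm:relations}(iii) packages (note that \eq{ABC3} is trivially satisfied on a bounded domain).

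In (iii), the hard inequality $\l'\geq\lambda_p$ is where your plan breaks. The translated limit $\phi_\infty$ is only a nonnegative \emph{subsolution} of $L+\lambda$, and nonnegative subsolutions may vanish at interior points without vanishing identically (e.g.\ $x\mapsto x^2$ for $u''\geq 0$ in one dimension); the strong maximum principle controls subsolutions from above, not below, so you cannot conclude $\phi_\infty>0$. Even granting $\phi_\infty>0$, the Ces\`aro averages---whose subsequential limits are indeed periodic, as you observe---carry no positive lower bound and may well tend to zero if $\phi_\infty$ is concentrated. (You also write ``supersolution'' where you mean subsolution in the final step.) The paper's route sidesteps all of this: periodic coefficients are bounded, so \eq{ABC3} holds, whence $\l'\geq\l''$ by Theorem~\ref{thm:relations}(iii), and $\l''=\lambda_p$ by Proposition~\ref{pro:l1''}(iv) (the periodic eigenfunction gives $\l''\geq\lambda_p$ via Proposition~\ref{pro:l1''=}, while $\l''>\lambda_p$ would force the MP for $L+\lambda_p$ by Theorem~\ref{thm:MP}, contradicting the existence of that same eigenfunction).
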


The fact that the set of ``admissible functions'' in \eq{l1'} could be empty was
not discussed in previous
papers, where, essentially, only the case $\O=\R^N$ and $L$ with
bounded coefficients was treated.
Statements (i) and (iii) are proved in \cite{BHRossi} (for
operators with smooth coefficients) and \cite{PhD}, requiring, in both cases,
the additional condition that the functions $\phi$ in \eq{l1'}
are uniformly Lipschitz-continuous. In the general case where this extra
condition is not imposed, properties (i), (ii) and the
inequality $\l'\geq\lambda_p$ in (iii) can be deduced from the properties of
$\l$ and $\l''$ - Propositions \ref{l1}, \ref{pro:l1''} - by means of
\thm{relations} here. The inequality $\l'\leq\lambda_p$ is
immediately obtained by taking $\phi$ equal to the periodic principal
eigenfunction in \eq{l1'}.

%
%


\subsection{Finiteness of $\l$}\label{sec:finite}

By Proposition \ref{l1} part (ii), we know that $\l\in\R$ if $c$ is bounded
above. Otherwise, it could be equal to $-\infty$, that is, the set of
admissible functions in \eq{l1}
could be empty.

\begin{proposition}
Let $\O$ be a smooth domain and $L$ be a uniformly elliptic
operator with $a_{ij},\ b_i$ bounded and $c$ such that there
exists a positive constant $\delta$ and a sequence $\seq{x}$
satisfying
\Fi{supc}
\fa n\in\N,\quad B_\delta(x_n)\subset\O,\qquad
\limn\inf_{B_\delta(x_n)}c=+\infty.
\Ff
Then, $\l(-L,\O)=-\infty$
and the MP, as stated in Definition \ref{def:MP}, does not hold
for $L$ in $\O$.
\end{proposition}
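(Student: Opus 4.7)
\emph{Step 1: $\l(-L,\O)=-\infty$.} The approach is to bound $\l$ from above by the principal eigenvalue on a single ball $B_\delta(x_n)$ and then let $n\to\infty$. By the domain monotonicity of $\l$ (Proposition \ref{l1}(iii)) and the coincidence of $\l$ with the classical Dirichlet eigenvalue on bounded smooth domains (Proposition \ref{l1}(i)), one has $\l(-L,\O)\le\lambda_{B_\delta(x_n)}$ for every $n\in\N$. To bound the right-hand side I split off the zero-order part: write $L_0u:=a_{ij}\partial_{ij}u+b_i\partial_i u$ and set $m_n:=\inf_{B_\delta(x_n)}c$. Since $c-m_n\ge0$ on $B_\delta(x_n)$, decomposing $L=(L_0+m_n)+(c-m_n)$ and applying the monotonicity in the zero-order coefficient (Proposition \ref{l1}(vii)) give
\[ \lambda_{B_\delta(x_n)}\le\l(-(L_0+m_n),B_\delta(x_n))=\l(-L_0,B_\delta(x_n))-m_n, \]
where the last equality comes directly from the definition \eq{l1} (a constant shift of the zero-order term shifts $\l$ by the same constant). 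The assumptions of the statement imply $a_{ij},b_i$ are bounded on $\O$ and $L_0$ is uniformly elliptic, so Proposition \ref{l1}(ii) produces a constant $C$ independent of $n$ with $\l(-L_0,B_\delta(x_n))\le C\delta^{-2}$. Combined with \eq{supc} (i.e.~$m_n\to+\infty$), this forces $\lambda_{B_\delta(x_n)}\to-\infty$, whence $\l(-L,\O)=-\infty$.

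\emph{Step 2: the MP fails.} Since $\O$ is smooth, \thm{relations}(ii) applies and yields $\l'(-L,\O)\le\l(-L,\O)=-\infty$, in particular $\l'(-L,\O)<0$; the contrapositive of \thm{MP}(ii) then rules out the MP for $L$ in $\O$. One could alternatively exhibit a direct witness: for $n$ large enough that $\lambda_n:=\lambda_{B_\delta(x_n)}<0$, the classical positive Dirichlet eigenfunction $\vp_n$ on $B_\delta(x_n)$ satisfies $L\vp_n=-\lambda_n\vp_n>0$ and $\vp_n=0$ on $\partial B_\delta(x_n)$, so extending it by zero to $\O$ produces a bounded nonnegative function, positive in $B_\delta(x_n)$ and vanishing in a neighborhood of $\partial\O$, which morally violates the MP. The main technical obstacle in making this direct construction rigorous is the regularity across $\partial B_\delta(x_n)$: the Hopf lemma forces $\partial_\nu\vp_n<0$ there, so the naive extension is only Lipschitz and not in $W^{2,N}_\loc(\O)$. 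This defect can be repaired by a mollification near $\partial B_\delta(x_n)$, exploiting that $-\lambda_n$ can be made arbitrarily large compared with the error produced by the smoothing; however, the indirect route via \thm{relations}(ii) and \thm{MP}(ii) avoids the issue entirely, and is the argument I would use.
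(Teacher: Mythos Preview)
Your proof is correct. Step 2 is identical to the paper's argument (invoking \thm{relations}(ii) and \thm{MP}(ii)), and your discussion of the obstruction to the naive direct construction is accurate.

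In Step 1 you take a different route from the paper. The paper reduces to showing $\l(-L,B_\delta(x_n))\le0$ for some $n$ (after a shift in $\lambda$) and does this by an \emph{explicit} construction: it builds a radial $C^2$ test function $\theta_n(x)=\vartheta(|x-x_n|)$ vanishing on $\partial B_\delta(x_n)$, checks by hand that $L\theta_n>0$ on the ball once $\inf_{B_\delta(x_n)}c$ is large enough, and then reads off $\l'(-L,B_\delta(x_n))\le0$ from the definition of $\l'$, whence $\lambda_{B_\delta(x_n)}\le0$. You instead split off the constant $m_n=\inf_{B_\delta(x_n)}c$, use the monotonicity of $\l$ in the zero-order term (Proposition \ref{l1}(vii)) together with the shift identity, and then invoke the uniform upper bound of Proposition \ref{l1}(ii) on the operator $L_0$ with no zero-order term. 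This is cleaner: it avoids any barrier construction and makes the dependence on the hypotheses (uniform ellipticity, boundedness of $a_{ij},b_i$, fixed radius $\delta$) transparent, since these are exactly the data controlling the constant $C$ in Proposition \ref{l1}(ii). The paper's explicit test function, on the other hand, is self-contained and does not rely on the black-box upper bound from \cite{BNV}.
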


\begin{proof}
Since, for $\lambda\in\R$, $L-\lambda$ satisfies the same condition \eq{supc}
as $L$ and, by definition,
$$\l(-L,\O)=\l(-(L-\lambda),\O)-\lambda,$$
to prove that $\l(-L,\O)=-\infty$ it is sufficient to show that
$\l(-L,\O)\leq0$. Thus, owing to Proposition \ref{l1} part (iii), it is
enough to show that
$\l(-L,B_\delta(x_n))\leq0$ for some $n\in\N$. 
Consider a function $\vartheta\in C^2([0,\delta])$ satisfying
$$\vartheta>0\; \text{ in }[0,\delta),\qquad
\vartheta'(0)=0,\qquad\vartheta(\delta)=\vartheta'(\delta)=0,\qquad
\vartheta''>0\; \text{ in }[\frac\delta2,\delta]$$
(for instance, $\vartheta(r):=\cos(\frac\pi\delta r)+1$).
The functions $\seq{\theta}$ defined by $\theta_n(x):=\vartheta(|x-x_n|)$
satisfy, a.e.~in $B_\delta(x_n)\backslash B_{\frac\delta2}(x_n)$,
$$a_{ij}(x)\partial_{ij}\theta_n+b_i(x)\partial_i\theta_n\geq
\left(\inf_{\O}\ul\alpha\right)\vartheta''(|x-x_n|)-k
|\vartheta'(|x-x_n|)+\vartheta(|x-x_n|)|,$$
for some $k$ independent of $n$.
There exists then $\rho\in(0,\delta)$, independent of $n$, such that
$a_{ij}(x)\partial_{ij}\theta_n+b_i(x)\partial_i\theta_n>0$
a.e.~in $B_\delta(x_n)\backslash B_\rho(x_n)$.
%
On the other hand,
$$L\theta_n\geq- k'+\left(\inf_{B_\delta(x_n)}c\right)
\left(\min_{[0,\rho]}\vartheta\right)\quad\text{a.e.~in }B_\rho(x_n),$$
where $k'$ is another positive constant independent of $n$.
As a consequence, using the hypothesis on $c$, we can find $n\in\N$ such that
$L\theta_n>0$ a.e.~in $B_\delta(x_n)$.
Taking $\phi=\theta_n$ in \eq{l1'} we obtain
$\l'(-L,B_\delta(x_n))\leq0$. Eventually, statement (i) of Propositions
\ref{l1} and \ref{l1'} yield
$$0\geq\l'(-L,B_\delta(x_n))=\lambda_{B_\delta(x_n)}=
\l(-L,B_\delta(x_n)).$$ 
That the MP does not hold in this case (in fact, as soon as $-\infty\leq\l<0$)
follows from Theorems \ref{thm:MP} and
\ref{thm:relations} part (ii), proved in Sections \ref{sec:MP} and
\ref{sec:relations} respectively.
%
\end{proof}

The hypothesis on $c$ in the previous statement cannot be weakened by
$\sup c=+\infty$. One can see this by considering, in dimension $1$, the
operator $L u:=u''+c(x)u$, with $c(x):=v'(x)-v^2(x)$ and $v\in
C^1(\R)$ such that $\sup(v'-v^2)=+\infty$. We leave to the reader to check that
such a function $v$ exists. Since 
the function $\phi(x)=e^{-\int_0^x v(t)dt}$ satisfies $L\phi=0$, it follows that
$\l(-L,\R)\geq0$.
We now show that if
the $b_i$ are unbounded then it may happen that
$\l(-L,\O)\in\R$ even though $c$ satisfies \eq{supc}.

\begin{proposition}\label{pro:l1inR}
If the $a_{ij}$ are bounded, $b(x)\.x$ does not change sign
for $|x|$ large and it holds that
$$\limsup_{\su{x\in\O}{|x|\to\infty}}\frac{c(x)}{\frac{|b(x)\.x|}{|x|}+1}<+\infty,$$
then $\l(-L,\O)$ is finite.
\end{proposition}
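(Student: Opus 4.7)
The plan is to establish both a finite upper bound and a finite lower bound on $\lambda_1(-L,\O)$. The upper bound is immediate from Proposition \ref{l1} part (ii): since $\Omega$ is open it contains some ball $B_r(x_0)$ with $r\leq 1$, and on this ball $a_{ij},b_i,c\in L^\infty(B_r(x_0))$ with $\inf_{B_r(x_0)}\ul\alpha>0$, so $\lambda_1(-L,\O)\leq Cr^{-2}<+\infty$. Hence the real task is to exhibit some admissible pair $(\lambda,\phi)$ in the definition \eq{l1}, which will give $\lambda_1(-L,\O)\geq\lambda>-\infty$.

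For this I would construct a smooth positive supersolution of the form $\phi(x):=\exp(\sigma\mu\sqrt{|x|^2+1})$ for suitable $\sigma\in\{-1,+1\}$ and $\mu>0$. Writing $\rho(x):=\sqrt{|x|^2+1}$ and computing directly,
\[
\frac{L\phi}{\phi}=\mu^2\,\frac{a_{ij}(x)x_ix_j}{\rho^2}+\sigma\mu\,\frac{b(x)\cdot x}{\rho}+c(x)+\sigma\mu\left(\frac{a_{ii}(x)}{\rho}-\frac{a_{ij}(x)x_ix_j}{\rho^3}\right).
\]
The last parenthesis is $O(1/|x|)$ as $|x|\to\infty$ and the first term is bounded by $\mu^2\sup_\O\ol\alpha$, which is finite since the $a_{ij}$ are bounded. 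The key step is to exploit the sign hypothesis: by assumption, for $|x|$ large $b(x)\cdot x$ has constant sign, so I choose $\sigma$ to be the opposite sign of $b\cdot x$ at infinity. With this choice, for $|x|$ large one has $\sigma\mu b(x)\cdot x/\rho\leq -\mu |b(x)\cdot x|/(2|x|)$ (using $\rho\sim|x|$), and the growth hypothesis on $c$ gives a constant $C$ and $R>0$ with $c(x)\leq C(|b(x)\cdot x|/|x|+1)$ for $|x|\geq R$. Combining these, for $\mu$ sufficiently large (e.g.\ $\mu\geq 2C$),
\[
\sigma\mu\,\frac{b(x)\cdot x}{\rho}+c(x)\leq (C-\tfrac{\mu}{2})\,\frac{|b(x)\cdot x|}{|x|}+C\leq C.
\]
This cancellation is the heart of the argument; everything else is just bookkeeping.

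Putting this together, $L\phi/\phi$ is bounded above on $\O\cap\{|x|\geq R\}$ by some constant $K$ depending only on $\mu$, $\sup_\O\ol\alpha$ and $C$. On the complementary set $\O\cap\{|x|\leq R\}$, the coefficients $a_{ij},b_i,c$ are in $L^\infty_\loc$ and $\phi\in C^2(\R^N)$ with $\inf_{|x|\leq R}\phi>0$, so $L\phi/\phi$ is essentially bounded there as well. Hence $L\phi\leq K'\phi$ a.e.\ in $\O$ for some $K'\in\R$, i.e., $(L-K')\phi\leq 0$, showing that $-K'$ belongs to the set in \eq{l1}. Therefore $\lambda_1(-L,\O)\geq -K'>-\infty$.

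The main obstacle is really just the correct choice of test function: a Gaussian $e^{-\mu|x|^2/2}$ would make the $\mu^2 a_{ij}x_ix_j$ term unbounded, whereas a power of $|x|$ would not produce enough negative drift to absorb the growth of $c$. The exponential $e^{\sigma\mu|x|}$ (smoothed at the origin) is just right because it generates a drift term of exactly the order $|b\cdot x|/|x|$, matching the growth rate permitted for $c$ by the hypothesis, while the quadratic term it produces is bounded thanks to boundedness of $a_{ij}$.
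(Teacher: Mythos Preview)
Your proof is correct and follows essentially the same approach as the paper: both construct a radial test function of the type $e^{\pm\sigma|x|}$ (the paper uses a generic $C^2$ extension near the origin, you use the explicit smoothing $\sqrt{|x|^2+1}$), choose the sign opposite to that of $b\cdot x$ at infinity so that the drift term absorbs the permitted growth of $c$, and invoke Proposition~\ref{l1} part (ii) for the upper bound. The computations and the key cancellation you identify are the same as in the paper.
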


\begin{proof}
Let $\phi\in C^2(\R^N)$ be a positive function satisfying, for $|x|\geq1$,
$\phi(x)=e^{\pm\sigma|x|}$, where the $\pm$
is in agreement with the sign of $-b\.x$ at infinity, and $\sigma>0$ will be
chosen later. Direct computation shows that
$$L\phi=\left[\frac{a_{ij}x_ix_j}{|x|^2}\sigma^2
\pm\left(\frac{\tr(a_{ij})}{|x|}-\frac{a_{ij}x_ix_j}{|x|^3}
+\frac{b\.x}{|x|}\right)\sigma+c\right]\phi
\quad\text{a.e.~in }\O\backslash B_1.$$
Using the hypotheses, we can then choose $\sigma$ large enough and
$\lambda\in\R$ such that $(L+\lambda)\phi<0$ a.e.~in
$\O$. Hence, $\l(-L,\O)\geq\lambda$. On the other hand, $\l(-L,\O)<+\infty$
by Proposition \ref{l1} part (ii). This proof also shows that
$\l''(-L,\O)\in\R$,
under the same conditions, when $b(x)\.x<0$ at infinity.
\end{proof}


\section{Existence of positive eigenfunctions vanishing on $\partial\O$}
\label{sec:charE}

We now prove the characterization of the set of eigenvalues
$\mc{E}$. One of the main tools we require is the following
boundary Harnack inequality, quoted from \cite{BCN3}, which extends the
previous versions of \cite{Caf-bH}, \cite{Bau}. We recall that,
for $\delta>0$, $\O^\delta$ denotes the set $\{x\in\O\ :\
\dist(x,\partial\O)>\delta\}$.

\begin{theorem}[\cite{BCN3}]\label{thm:bHarnack}
Let $\O$ be a bounded domain and $\O'$ be an open subset of $\O$
such that $T:=\partial\O\cap (\O'+B_\eta)$ is of class $C^{1,1}$,
for some $\eta>0$. Then,
any nonnegative solution $u\in W^{2,N}_{loc}(\O)\cap
C^0(\O\cup T)$ of
$$\left\{\begin{array}{ll}
Lu=0 & \text{a.e.~in }\O\\
u=0 & \text{on }T,\\
\end{array}\right.$$
satisfies
$$\sup_{\O'} u\leq C\inf_{\O^\delta}u,$$
for all $\delta>0$ such that $\O^\delta\neq\emptyset$, with $C$ depending on
$N$, $\O$, $\delta$, $\eta$, $\inf\ul\alpha$ and the
$L^\infty$ norms of $a_{ij}$, $b_i$, $c$.
\end{theorem}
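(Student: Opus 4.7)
The plan is to combine the classical two-step Carleson strategy near the smooth part of the boundary with a globalization via interior Harnack chains. Step 1 (boundary Lipschitz decay): fix $x_0 \in T$ and $r>0$ small enough that $B_{2r}(x_0)\cap\partial\O\subset T$ (possible since the buffer $B_\eta$ in the hypothesis ensures this for $r<\eta/2$), and prove
\[ u(x) \leq \frac{C}{r}\,\dist(x,\partial\O)\,\sup_{B_{2r}(x_0)\cap\O}u, \qquad x\in B_r(x_0)\cap\O. \]
Because $T$ is $C^{1,1}$, the function $d(\cdot)=\dist(\cdot,\partial\O)$ lies in $W^{2,\infty}$ in a one-sided neighborhood of $T$, so a suitable multiple of $d$ plus a lower-order polynomial correction (to absorb the $b_i\partial_i$ and $c$ terms) is an $L$-supersolution; the MP on the small bounded smooth domain $B_r(x_0)\cap\O$, valid because $\l$ is large on small balls by Proposition~\ref{l1}(ii), then compares $u$ to $r^{-1}\sup_{B_{2r}\cap\O}u$ times this supersolution.

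Step 2 (Carleson estimate): choose a nontangential approach point $A_r(x_0)\in\O$ with $|A_r(x_0)-x_0|$ and $\dist(A_r(x_0),\partial\O)$ both of order $r$, and upgrade Step~1 to
\[ \sup_{B_{r/2}(x_0)\cap\O}u \leq C\,u(A_r(x_0)). \]
Given $x\in B_{r/2}(x_0)\cap\O$, one builds an interior Harnack chain in $\O$ from $x$ first along the inner normal to $T$ to depth $\simeq r$ and then across to $A_r(x_0)$; the chain has length $O(\log(r/d(x)))$, so iterated interior Harnack yields $u(x)\leq(Cr/d(x))^\alpha u(A_r(x_0))$ for some $\alpha>0$ determined by the Harnack constant. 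Combined with the linear decay $u(x)\leq C\,d(x)\,r^{-1}\sup_{B_{2r}}u$ of Step~1, a dyadic covering in $r$ closes the inequality (this is the standard argument of \cite{Caf-bH,Bau,BCN3}).

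Step 3 (globalization): the set $\O'\setminus\O^\delta$ is bounded and meets $\partial\O$ only within distance $\eta$ of $\O'$, hence only along $T$. Cover $\O'\setminus\O^{\delta''}$, for some $\delta''$ comparable to $\min(\delta,\eta)$, by finitely many balls $B_{r_k/2}(x_0^{(k)})$ with $x_0^{(k)}\in T$, chosen so that every approach point $A_{r_k}(x_0^{(k)})$ lies in $\O^{\delta''}$. Step~2 then controls $\sup_{\O'\setminus\O^{\delta''}}u$ by $C\max_k u(A_{r_k}(x_0^{(k)}))$, and a finite interior Harnack chain inside the bounded open set $\O^{\delta''}\supset\O^\delta$ dominates each of these by $C'\inf_{\O^\delta}u$; points of $\O'\cap\O^{\delta''}$ are handled by interior Harnack alone. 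All constants depend only on the quantities listed in the statement.

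The main obstacle is Step~2 — the boundary Harnack principle for a non-self-adjoint, non-divergence $L$ with merely $L^\infty$ lower-order coefficients. Energy/variational methods are unavailable, so the interior Harnack constants (themselves polynomial in the coefficient bounds) must accumulate along the Harnack chain in such a way that their geometric growth is exactly compensated by the linear decay of Step~1. Making this balance quantitative, while keeping the dependence of the final constant only on $N$, $\O$, $\delta$, $\eta$, $\inf\ul\alpha$ and the $L^\infty$ norms of $a_{ij},b_i,c$, is the technical heart of the result and is carried out in \cite{BCN3}.
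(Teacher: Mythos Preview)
The paper does not contain a proof of this theorem: it is quoted verbatim from \cite{BCN3} as a tool (see the sentence preceding the statement, ``the following boundary Harnack inequality, quoted from \cite{BCN3}''), and no argument is given. There is therefore nothing in the paper to compare your proposal against. Your outline is a faithful sketch of the classical Carleson strategy as carried out in \cite{Caf-bH,Bau,BCN3}, and you yourself acknowledge at the end that the technical core (Step~2) is deferred to \cite{BCN3}; that is exactly the status this result has in the present paper.
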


\begin{proof}[Proof of \thm{spettro}]
From the definition \eq{l1} of $\l(-L,\O)$ it follows that
$\mc{E}\subset(-\infty,\l(-L,\O)]$. This concludes the proof if
$\l(-L,\O)=-\infty$. Let us prove the reverse inclusion when
$\l(-L,\O)>-\infty$.
We can assume, without loss of generality, that $0\in\O$.
Since $\O$ is smooth, a compactness argument (that we leave to the reader) shows
that, for any
$n\in\N$,
there exists $r(n)\geq n$ such that $\O\cap B_n$ is contained in a single
connected
component of $\O\cap B_{r(n)}$. Let $\O_n$ denote this connected component.
It is not restrictive to assume that $\O_n\subset\O_{n+1}$ for $n\in\N$.
Hence, Proposition \ref{l1} part (iv) yields
$$\limn\l(-L,\O_n)=\l(-L,\O).$$
We first show the existence of an eigenfunction associated with $\l(-L,\O)$,
and then of one associated with $\lambda$, for any given $\lambda<\l(-L,\O)$.

{\em Step 1:\, $\l(-L,\O)\in\mc{E}$.}\\
For $n\in\N$, let $\vp^n$ be the generalized principal
eigenfunction of $-L$ in $\O_n$, normalized by $\vp^n(0)=1$. This eigenfunction
is obtained in the work of Berestycki, Nirenberg and Varadhan \cite{BNV} (note
that $\O_n$, in general, is not smooth). For
the reader's ease, some of the main results of that paper are described in
Appendix \ref{sec:BNV} here. In particular, the existence of $\vp^n$ is provided
by Property
A.1. Fix $m\in\N$. Since for
$n>m$, $\vp^n$ belongs to $W^{2,p}(\O\cap B_m)$, $\fa p<\infty$,
and vanishes on $\partial\O\cap B_m$, applying the boundary
Harnack inequality - \thm{bHarnack} - with
$\O=\O_{m+1}$,
$\O'=\O\cap B_m$, $\eta=1$ and $\delta<\dist(0,\partial\O)$, we
find a constant $C_m$ such that
$$\fa n> m,\quad
\sup_{\O\cap B_m}\vp^n\leq C_m.$$
Thus, the elliptic local boundary estimate of Agmon, Douglis and
Nirenberg \cite{ADN} (see also Theorem 9.13 of \cite{GT}) implies that the
$(\vp^n)_{n>m}$ are uniformly bounded in $W^{2,p}(\O\cap
B_{m-\frac12})$ (since $\ol B_{m-\frac12}\cap\partial\O$ is contained in a
smooth boundary portion of $B_m\cap\O$). Consequently they converge, up to
subsequences, weakly in $ W^{2,p}(\O\cap B_{m-\frac12})$ and, 
by Morrey's inequality (see, e.g., Theorem 7.26 part (ii) of \cite{GT}),
strongly in $C^1(\ol\O\cap B_{m-1})$ to a
nonnegative solution $\phi^m$ of
$$\left\{\begin{array}{ll}
         -L\phi^m=\l(-L,\O)\phi^m & \text{a.e.~in }\O\cap B_{m-1}\\
         \phi^m=0 & \text{on }\partial\O\cap B_{m-1}.
         \end{array}\right.$$
In particular, $\phi^m(0)=1$ and then $\phi^m$ is positive in $\O\cap B_{m-1}$
by the \SMP.
Therefore, using a diagonal method, we can extract a subsequence of
$(\vp^n)_{n\in\N}$ converging to a positive function $\phi$ which is
a solution of the above problem for all $m>1$. That is, $\l(-L,\O)\in\mc{E}$.

{\em Step 2:\, $(-\infty,\l(-L,\O))\subset\mc{E}$.}\\
Take $\lambda<\l(-L,\O)$. Since $\O$ is unbounded and connected,
$\O_n\backslash\ol B_{n-1}\neq\emptyset$ for all $n\in\N$. Let
$(f_n)_{n\in\N}$ be a family of continuous, nonpositive and not identically
equal to zero functions such that
$$\fa n\in\N,\quad \supp f_n\subset\O_n\backslash\ol B_{n-1}.$$
Since for $n\in\N$, $\l(-L,\O_n)>\l(-L,\O)
>\lambda$ by Proposition \ref{l1} part (iii), we have
$\l(-(L+\lambda),\O_n)>0$. Hence, Property A.5 provides a
bounded solution $u^n\in W^{2,N}_{loc}(\O_n\cup(B_n\cap\partial\O))$ of
$$
\left\{\begin{array}{ll}
(L+\lambda)u^n=f_n & \text{a.e.~in }\O_n\\
u^n\uguale0 & \text{on }\partial\O_n.\\
\end{array}\right.
$$
The meaning of the relaxed boundary condition $u^n\uguale0$ is recalled in
Appendix \ref{sec:BNV}. However, we only use here the fact that it implies
$u^n=0$ in the classical sense on the smooth portion $B_n\cap\partial\O$. Note
that $u^n$ is nonnegative by Property A.2, and then it is
strictly positive in $\O_n$ by the \SMP. Moreover, Lemma 9.16 in
\cite{GT} yields $u^n\in W^{2,p}_{loc}(\O_n\cup(B_n\cap\partial\O))$, $\fa
p<\infty$. For $n\in\N$, the function $v^n$ defined
by
$$v^n(x):=\frac{u^n(x)}{u^n(0)},$$
belongs to $W^{2,p}(\O\cap B_{n-1})$, it is positive and satisfies: $v^n(0)=1$,
$$
\left\{\begin{array}{ll}
-Lv^n=\lambda v^n & \text{a.e.~in }\O\cap B_{n-2}\\
v^n=0 & \text{on }\partial\O\cap B_n.\\
\end{array}\right.
$$
We can thereby proceed exactly as in step 1, with $(v^n)_{n\in\N}$
in place of $(\vp^n)_{n\in\N}$, and infer that $\lambda\in\mc{E}$.
\end{proof}

\begin{remark}\label{rem:portion}
Actually, the arguments in the proof of \thm{spettro} yield a more general
statement. Namely, if $\O$ is unbounded and has a smooth boundary portion
$T$
then $\mc{E}_T=(-\infty,\l(-L,\O)]$, where
\[\begin{split}
\mc{E}_T:=\{\lambda\in\R\ :\ \ex\phi\in W^{2,p}_\loc(\O\cup T),\ \fa p<\infty,\
\phi>0,
\ -L\phi=\lambda\phi\text{ a.e.~in }\O,\\
\phi=0\text{ on }T\}.
\end{split}\]
\end{remark}

We now exhibit an example where the set of eigenvalues does not reduce to
$\{\l(-L,\O)\}$ even if one restricts to (positive)
eigenfunctions decaying to $0$ at infinity.
This example also
shows that $\l>0$ does not imply the validity of
the MP for subsolutions which are nonpositive also at infinity, and not only on
$\partial\O$.

\begin{counter}\label{c-e}
There exists an operator $L$ in $\R$ such that
$\l(-L,\R)>0$ and, for all $\lambda\in[0,\l(-L,\R)]$, there is a positive
function $\phi\in W^{2,p}(\R)$, $\fa p<\infty$, satisfying
$$-L\phi=\lambda\phi\quad\text{a.e.~in }\R,\qquad
\limsup_{|x|\to\infty}\phi(x)e^{|x|}\leq1.$$
\end{counter}

\begin{proof}
Consider the operator $L$ defined by
$$Lu(x):=\left\{\begin{array}{ll}
u''(x)-4u'(x)+3u(x) & \text{if }x<-\frac\pi4\\
u''(x)+u(x) & \text{if }-\frac\pi4\leq x\leq\frac\pi4\\
u''(x)+4u'(x)+3u(x) & \text{if }x>\frac\pi4.
\end{array}\right.$$
In order to show that $\l(-L,\R)>0$, we explicitly
construct a function $v\in W^{2,\infty}(\R)$ such that
$(L+\lambda)v\leq0$,
for some $\lambda>0$. We set
$$v(x):=\left\{\begin{array}{ll}
ke^{2x} & \text{if }x<-\frac\pi4\\
\cos(\gamma x) & \text{if }-\frac\pi4\leq x\leq\frac\pi4\\
ke^{-2x} & \text{if }x>\frac\pi4,
\end{array}\right.$$
where $k=e^{\frac\pi2}\cos(\frac\pi4\gamma)$ and $\gamma$ is the solution in
$(1,2)$ of the equation 
$$\gamma\tan\left(\frac\pi4\gamma\right)-2=0.$$
We leave to the reader to check that $v\in W^{2,\infty}(\R)$. We see that
$Lv=-v$
for $|x|>\pi/4$. For $|x|<\pi/4$, we find $Lv=(1-\gamma^2)v$.
Hence, $(L+\lambda)v\leq0$ a.e.~in $\R$, with
$\lambda=\min(1,\gamma^2-1)>0$. 
Now, direct computation shows that the function
$$\ul u(x):=\left\{\begin{array}{ll}
e^x & \text{if }x<-\frac\pi4\\
\sqrt2e^{-\frac\pi4}\cos(x) & \text{if }-\frac\pi4\leq x\leq\frac\pi4\\
e^{-x} & \text{if }x>\frac\pi4
\end{array}\right.$$
belongs to $W^{2,\infty}(\R)$ and satisfies $L\ul u=0$ in
$\R\backslash\{\pm\pi/4\}$.
For $\lambda\in[0,\l(-L,\R)]$, let $\phi$ be the associated positive
eigenfunction
constructed as in the proof of \thm{spettro}, with $\O_n=B_n$.
It is clear that, when $\lambda<\l(-L,\R)$,
it is possible to take an even function $f_n$ in that construction.
Hence, the symmetry of $L$ implies that $\phi$ is even.
Normalize $\phi$ in such a way that $\phi(0)<\ul u(0)$.
By property (iv) of Proposition \ref{l1}, $\l(-L,B_r)>0$ for $r$ large enough.
Thus, if $\ul u(\pm r)\leq\phi(\pm r)$ for such values of $r$,
the MP yields a contradiction. This shows that $\phi(x)<\ul u(x)$
for $|x|$ large enough, which concludes the proof.

We remark that $\l'(-L,\R)\leq-1$, as is seen by taking $\phi\equiv1$ in
\eq{l1'}. This is in agreement with \thm{MP}.
\end{proof}


\section{Maximum principle}\label{sec:MP}

We derive \thm{MP} as a particular case of a result concerning
subsolutions bounded from above by (constant times) a barrier $\beta$.
The function $\beta$ is positive and satisfies either
\Fi{b<poli}
\ex\sigma>0,\quad\limsup_{\su{x\in\O}{|x|\to\infty}}\beta(x)|x|^{-\sigma}=0,
\Ff
if the coefficients of $L$ satisfy \eq{ABC3}, or
\Fi{b<exp}
\ex\sigma>0,\quad\limsup_{\su{x\in\O}{|x|\to\infty}}\beta(x)e^{-\sigma|x|}=0,
\Ff
if they satisfy the stronger hypothesis
\Fi{sub1}
\sup_\O c<\infty,\qquad
\sup_\O a_{ij}<\infty,\qquad
\sup_{x\in\O}\frac{b(x)\.x}{|x|}<\infty.
\Ff

\begin{definition}\label{def:bMP}
Let $\beta$ be a positive function on $\O$.
We say that the operator $L$ satisfies the {\em $\beta$-MP} in $\O$ if every
function $u\in W^{2,N}_\loc(\O)$ such that
$$Lu\geq0 \ \text{ a.e.~in }\O,\qquad
\sup_{\O}\frac u\beta<\infty,\qquad
\fa\xi\in\partial\O,\ \ \limsup_{x\to\xi}u(x)\leq0,$$
satisfies $u\leq0$ in $\O$.
\end{definition}

\thm{MP} represents the particular case $\beta\equiv1$ of the following
statement.

\begin{theorem}\label{thm:bMP}
The operator $L$ satisfies the $\beta$-MP in $\O$
\begin{enumerate}[(i)]
\item if $\lambda_\beta''(-L,\O)>0$ and either \eq{ABC3}, \eq{b<poli}
or
\eq{sub1}, \eq{b<exp} hold;

\item only if $\;\lambda_\beta'(-L,\O)\geq0$.
\end{enumerate}
\end{theorem}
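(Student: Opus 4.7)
This is essentially the contrapositive of the very definition of $\l'_\beta$. Suppose $\l'_\beta(-L,\O)<0$; pick any $\lambda\in(\l'_\beta(-L,\O),0)$, and take, by Definition~\ref{def:l1b}, a positive $\phi\in W^{2,N}_{\loc}(\O)$ with $0<\phi\leq\beta$, $(L+\lambda)\phi\geq 0$ in $\O$, and $\phi(x)\to 0$ as $x\to\xi\in\partial\O$. Since $\lambda<0$, the inequality $L\phi\geq -\lambda\phi>0$ holds in $\O$, so $\phi$ is a positive subsolution of $L$ with $\sup_{\O}(\phi/\beta)\leq 1<\infty$ and vanishing boundary trace. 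This contradicts the $\beta$-MP, and by contraposition the $\beta$-MP forces $\l'_\beta(-L,\O)\geq 0$.

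\textbf{Part (i): the transformation.} Fix $\lambda\in(0,\l''_\beta(-L,\O))$ and, by Definition~\ref{def:l1b}, choose $\phi\in W^{2,N}_{\loc}(\O)$ with $\phi\geq\beta$ and $(L+\lambda)\phi\leq 0$ in $\O$. Let $u$ be as in Definition~\ref{def:bMP}. The substitution $v:=u/\phi$ is well-defined and belongs to $W^{2,N}_{\loc}(\O)$ since $\phi\geq\beta>0$. A direct computation yields
\[Lu=vL\phi+\phi\,\widetilde L v,\qquad \widetilde L:=a_{ij}\partial_{ij}+\Bigl(b_i+\tfrac{2a_{ij}\partial_j\phi}{\phi}\Bigr)\partial_i,\]
so that $\widetilde L$ has \emph{no zeroth-order term}. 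Combining $Lu\geq 0$ with $L\phi\leq-\lambda\phi$ gives $\widetilde L v\geq \lambda v$ wherever $v>0$. Moreover $v\leq u/\beta\leq K:=\sup_\O(u/\beta)<\infty$ and $\limsup_{x\to\xi}v(x)\leq 0$ at every $\xi\in\partial\O$. The proof therefore reduces to showing $\sup_\O v\leq 0$.

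\textbf{Part (i): the barrier argument.} Assume, for contradiction, $\sup_\O v>0$. The plan is to produce a positive barrier $\eta\in W^{2,N}_{\loc}(\O)$ with $\eta(x)\to+\infty$ as $|x|\to\infty$ and $\widetilde L\eta\leq\lambda\eta$ in $\O$, and then to apply the perturbation $v_\delta:=v-\delta\eta$. By linearity $\widetilde L v_\delta\geq\lambda v_\delta$ on $\{v_\delta>0\}\subset\{v>0\}$; also $\limsup_{x\to\xi}v_\delta(x)\leq 0$ at $\partial\O$ and $v_\delta\to-\infty$ at infinity (since $v$ is bounded above). For $\delta>0$ small enough, the positive supremum of $v_\delta$ is thus attained at some interior point $x_\delta\in\O$. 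At $x_\delta$ one has $\nabla v_\delta(x_\delta)=0$ and the Hessian of $v_\delta$ is negative semidefinite, so the ellipticity of $(a_{ij})$ yields $\widetilde L v_\delta(x_\delta)\leq 0$ (no zeroth-order term to worry about), contradicting $\widetilde L v_\delta(x_\delta)\geq\lambda v_\delta(x_\delta)>0$. Sending $\delta\to 0$ after ruling out $\sup v_\delta>0$ then gives $v\leq 0$, i.e.~$u\leq 0$ in $\O$. (The interior-max step for merely $W^{2,N}_{\loc}$ functions is handled in the classical way via ABP-type estimates.)

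\textbf{Main obstacle: building $\eta$.} The heart of the proof is the explicit construction of $\eta$ under each alternative growth regime. Via the symmetric identity $L(\phi\eta)=\eta L\phi+\phi\widetilde L\eta$, the condition $\widetilde L\eta\leq\lambda\eta$ is equivalent to $L(\phi\eta)+\lambda\phi\eta\leq\eta(L\phi+\lambda\phi)\leq 0$, so it is equivalent to find $\psi:=\phi\eta$ positive with $(L+\lambda')\psi\leq 0$ for some $\lambda'\in(0,\lambda]$ and $\psi/\phi\to\infty$ at infinity. Under \eqref{ABC3}--\eqref{b<poli} I would look for $\eta$ polynomial, specifically $\eta(x)=(1+|x|^2)^p$ with $2p$ larger than the sub-polynomial rate $\sigma$ of $\beta$; the quadratic bounds on $|a_{ij}|$ and $b\cdot x$ force $L\eta=O(\eta)$, and a small slack $\lambda-\lambda'>0$ combined with local Harnack-type gradient estimates on $\phi$ lets us absorb the cross term $2a_{ij}\partial_i\phi\,\partial_j\eta$ against $(\lambda-\lambda')\phi\eta$. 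Under \eqref{sub1}--\eqref{b<exp} the corresponding barrier is exponential, $\eta(x)=e^{\alpha\sqrt{1+|x|^2}}$ with $\alpha$ chosen compatible with the sub-exponential rate of $\beta$, exploiting $\sup a_{ij}<\infty$ and $\sup b\cdot x/|x|<\infty$ so that again $L\eta\leq C\eta$ with the cross term absorbable. The delicate bookkeeping in this absorption, and in particular the extraction of a sufficient gradient control on $\phi$, is the main technical point; the rest of the argument is the clean interior-max contradiction above.
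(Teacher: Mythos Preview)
Your Part (ii) is correct and matches the paper's argument exactly.

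Your Part (i) contains a genuine gap at exactly the point you flag as ``delicate'': the construction of the barrier $\eta$ satisfying $\widetilde L\eta\le\lambda\eta$ globally in $\O$. The transformed drift in $\widetilde L$ contains the term $2a_{ij}\partial_j\phi/\phi$, and you have \emph{no} control whatsoever on $\nabla\phi/\phi$. The function $\phi$ comes from Definition~\ref{def:l1b}: it is merely a $W^{2,N}_{\loc}$ supersolution with $\phi\ge\beta$, nothing more. Harnack-type gradient estimates apply to nonnegative \emph{solutions}, not to supersolutions; and even for solutions they require locally bounded coefficients, whereas under \eqref{ABC3} the $a_{ij}$ and $b_i$ may grow quadratically and linearly. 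Equivalently, in your reformulation via $\psi=\phi\eta$, the cross term $2a_{ij}\partial_i\phi\,\partial_j\eta$ cannot be absorbed by any slack $(\lambda-\lambda')\phi\eta$ without such control. So the barrier you need cannot, in general, be built.

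The paper's argument sidesteps this obstacle entirely by \emph{not} requiring a global supersolution. It sets $\phi_n:=\phi+\tfrac{1}{n}\chi$ with $\chi$ the explicit radial function $|x|^\sigma$ or $e^{\sigma|x|}$, and $k_n:=\sup_\O u/\phi_n$, attained at some $x_n\in\O$. The sequence $(k_n)$ is nondecreasing and bounded, hence Cauchy; comparing $k_n$ with $k_{2n}$ yields the crucial pointwise smallness $\tfrac{1}{n}\chi(x_n)\le 2(k_n^{-1}-k_{2n}^{-1})u(x_n)$. This means the perturbation $\tfrac{1}{n}L\chi$, which under \eqref{ABC3} or \eqref{sub1} is at most $C\tfrac{1}{n}\chi$, is dominated near $x_n$ by the slack $-\lambda\phi$ once $n$ is large. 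Thus $L\phi_n<0$ holds \emph{only in a neighbourhood of $x_n$}, which is all one needs: $k_n\phi_n-u$ is then a nonnegative supersolution there that vanishes at $x_n$, contradicting the strong maximum principle. The point is that the paper never computes anything involving $\nabla\phi$; the additive perturbation and the touching-point localisation replace your global barrier requirement.
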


\begin{proof}
Statement (ii) is an immediate consequence of Definition \ref{def:l1b}. Indeed, if
$\lambda_\beta'(-L,\O)<0$ then  
there are $\lambda<0$ and a positive function $\phi\in W^{2,N}_\loc(\O)$ such
that
$$\phi\leq\beta,\quad
L\phi\geq-\lambda\phi \ \text{ a.e.~in }\O,\qquad
\fa\xi\in\partial\O,\ \ \lim_{x\to\xi}\phi(x)\leq0.$$
Hence, $\phi$ violates the $\beta$-MP.

Let us prove (i). Assume by contradiction that there exists a function $u\in
W^{2,N}_\loc(\O)$ which is positive somewhere in $\O$ and satisfies
$$Lu\geq0 \ \text{ a.e.~in }\O,\qquad
\sup_{\O}\frac u\beta<\infty,\qquad
\fa\xi\in\partial\O,\ \ \limsup_{x\to\xi}u(x)\leq0.$$
Since $\lambda_\beta''(-L,\O)>0$, by Definition \ref{def:l1b} there exists
$\lambda>0$ and a function
$\phi\in W^{2,N}_\loc(\O)$ such that
$$\phi\geq\beta,\qquad (L+\lambda)\phi\leq0,\quad\text{a.e.~in }\O.$$
In particular, up to renormalization, we can assume that $\phi\geq u$ in $\O$.
We want to modify $\phi$ in order to obtain a function that
grows faster than $u$ at infinity and is still a
supersolution in a suitable subset of $\O$.
To this aim, we consider a positive smooth function $\chi:\R^N\to\R$
such that, for $|x|>1$, $\chi(x)=|x|^\sigma$ if $\beta$ satisfies \eq{b<poli}
or $\chi(x)=e^{\sigma|x|}$ if $\beta$ satisfies \eq{b<exp}.
For $n\in\N$, we set
$$\phi_n(x):=\phi(x)+\frac1n\chi(x),\qquad
k_n:=\sup_{\O}\frac u{\phi_n}.$$
Note that the sequence $\seq{k}$ is positive, nondecreasing
and bounded from above by $1$.
Thus,
it is convergent. Moreover, since
$$\limsup_{\su{x\in\O}{|x|\to\infty}}\frac{u(x)}{\phi_n(x)}\leq
n\left(\sup_\O\frac u\beta\right)\limsup_{\su{x\in\O}{|x|\to\infty}}
\frac{\beta(x)}{\chi(x)}=0,\qquad\fa\xi\in\partial\O,\quad\limsup_{x\to\xi}\frac
{ u(x) }{\phi_n(x)}=0,$$
there exists $x_n\in\O$ such that $k_n=\frac{u(x_n)}{\phi_n(x_n)}$.
We claim that, for $n$ large enough, $L\phi_n<0$
in a neighborhood of $x_n$.
The operator $L$ acts on a radial function $\theta(x)=\vartheta(|x|)$ in the
following way:
$$L\theta(x)=A(x)\vartheta''(|x|)+B(x)\vartheta'(|x|)+c(x)\vartheta(|x|),$$
where
\Fi{AB} A(x):=\frac{a_{ij}(x)x_ix_j}{|x|^2},\qquad
B(x):=\frac{b(x)\.x}{|x|}+ \frac{\tr
(a_{ij}(x))}{|x|}-\frac{a_{ij}(x)x_ix_j}{|x|^3}. \Ff
Hence, for a.e.~$x\in\O\backslash B_1$, in the case where $\beta$
satisfies \eq{b<poli} we get
\[\begin{split}
L\chi &=\left(\sigma(\sigma-1)\frac{A(x)}{|x|^2}+\sigma\frac{B(x)}{|x|}
+c(x)\right)\chi\\
&\leq\left(\sigma(N+\sigma-2)\frac{\ol\alpha(x)}{|x|^2}
+\sigma \frac{b(x)\.x}{|x|^2}+c(x)\right)\chi,
\end{split}\]
while, in the case of condition \eq{b<exp}, we get
\[\begin{split}
L\chi &=(\sigma^2A(x)+\sigma B(x)+c(x))\chi\\
&\leq\left[\sigma\left(\sigma+\frac{N-1}{|x|}\right)\ol\alpha(x)
+\sigma\frac{b(x)\.x}{|x|}+c(x)\right]\chi.
\end{split}\]
Therefore, in both cases, there exists a positive constant $C$ such that
$L\chi\leq C\chi$ a.e.~in $\O$.
Let us estimate the ``penalization'' term
$\frac1n\chi(x_n)$.
For $n\in\N$, we find that
$$\frac1{k_{2n}}\leq\frac{\phi_{2n}(x_n)}{u(x_n)}=
\frac{\phi(x_n)+\frac1{2n}\chi(x_n)}{u(x_n)}=
\frac1{k_n}-\frac{\chi(x_n)}{2nu(x_n)},$$
and then that
$$\frac{\chi(x_n)}n\leq 2\left(\frac1{k_n}-\frac1{k_{2n}}\right)u(x_n).$$
As a consequence, for $n\in\N$, there exists $\delta_n>0$ such that, for
a.e.~$x\in B_{\delta_n}(x_n)$,
$$
\frac1n L\chi(x)\leq C\frac{\chi(x)}n\leq
3C\left(\frac1{k_n}-\frac1{k_{2n}}\right)u(x)\leq
3C\left(\frac1{k_n}-\frac1{k_{2n}}\right)\phi(x).
$$
Thus,
$$L\phi_n\leq\left[-\lambda+3C\left(\frac1{k_n}-\frac1{k_{2n}}\right)\right]\phi
\quad\text{a.e.~in }B_{\delta_n}(x_n).$$
Since the sequence $\seq{k}$ is convergent, we
can then find $n\in\N$ such that $L\phi_n<0$ a.e.~in $B_{\delta_n}(x_n)$.
Whence we infer that the nonnegative function $w_n:=k_n\phi_n-u$
satisfies $Lw_n<0$ a.e.~in $B_{\delta_n}(x_n)$ and vanishes at $x_n$. This
contradicts the \SMP.
\end{proof}

%
%
%
%
%
%
%
%
%
%
%

\begin{remark}\label{rem:MP0}
If $\l'(-L,\O)=\l''(-L,\O)=0$ then the MP might or might not hold. Indeed,
if $L$ and $\O$ are periodic then $\l'(-L,\O)$ and $\l''(-L,\O)$ coincide with
$\lambda_p$.
Hence, if $\lambda_p=0$, the periodic principal eigenfunction violates the MP.
On the other hand,
the operator $L$ introduced at the beginning of the proof of Proposition
\ref{pro:nopef} below
satisfies the MP and $\l'(-L,\R)=\l''(-L,\R)=0$.
\end{remark}


\section{Properties of $\l''$}\label{sec:l1''}

\begin{proposition}\label{pro:l1''}
The quantity $\l''(-L,\O)$ defined by \eq{l1''} satisfies the following
properties:
\begin{enumerate}[(i)]
\item if $\O$ is bounded and smooth then $\l''(-L,\O)=\lambda_\O$;

\item $$-\sup_\O c\leq\l''(-L,\O)\leq\l(-L,\O);$$

\item if $\O'\subset\O$ then
$\l''(-L,\O')\geq\l''(-L,\O)$;

\item if $\O$ is smooth and $\O,\ L$ are periodic, with the same
period, then $\l''(-L,\O)=\lambda_p$;

\item
in its dependence on $c$, $\l''(-L,\O)$ is nonincreasing,
concave and Lipschitz-continuous (using the $L^\infty$ norm) with
Lipschitz constant 1;

\item
for uniformly elliptic operators with bounded coefficients,
$\l''(-L,\O)$ is locally Lipschitz-continuous with respect to the
$b_i$, with Lipschitz constant depending only on $N,\ \O$, the
ellipticity constants and the $L^\infty$ norm of $c$.
\end{enumerate}
\end{proposition}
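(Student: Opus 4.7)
The plan is to verify the six properties in turn, most of them being direct consequences of the definition \eqref{eq:l1''} together with material already established earlier in the paper. Parts (ii) and (iii) are immediate. For (ii), taking $\phi\equiv 1$ in \eqref{eq:l1''} makes every $\lambda\leq-\sup_\Omega c$ admissible, giving $\l''(-L,\O)\geq-\sup_\O c$; and every competitor in \eqref{eq:l1''} is also a competitor in \eqref{eq:l1}, so $\l''(-L,\O)\leq\l(-L,\O)$. For (iii), restricting an admissible $\phi$ for $\l''(-L,\O)$ to a subdomain $\O'\subset\O$ preserves both the condition $\inf\phi>0$ and the supersolution inequality, so the admissible set for $\l''(-L,\O')$ contains that for $\l''(-L,\O)$.

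For (i), the bound $\l''(-L,\O)\leq\lambda_\O$ comes from combining (ii) with Proposition \ref{l1} part (i). For the reverse inequality, fix a smooth bounded $\O'$ with $\overline\O\subset\O'$ and extend the coefficients of $L$ to $\overline{\O'}$ preserving continuity, boundedness, and strict positive ellipticity; let $\vp_{\O'}$ denote the classical Dirichlet eigenfunction of the extended $-L$ in $\O'$. Then $\vp_{\O'}>0$ on the compact set $\overline\O\subset\O'$, so $\inf_\O\vp_{\O'}>0$, and since $-L\vp_{\O'}=\lambda_{\O'}\vp_{\O'}$ in $\O$, it is admissible in \eqref{eq:l1''}; hence $\l''(-L,\O)\geq\lambda_{\O'}$. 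Letting $\O'\searrow\O$, the classical continuity of the Dirichlet eigenvalue (the bounded-domain case of Theorem \ref{thm:l1dec}) gives $\lambda_{\O'}\to\lambda_\O$. For (iv), the periodic principal eigenfunction $\vp_p$ is bounded above and below by positive constants on $\O$ and satisfies $-L\vp_p=\lambda_p\vp_p$, so $\l''(-L,\O)\geq\lambda_p$. The reverse inequality is obtained by chaining Theorem \ref{thm:relations} part (iii) (applicable because periodic bounded coefficients satisfy \eqref{eq:ABC3}) with Proposition \ref{l1'} part (iii), giving $\l''(-L,\O)\leq\l'(-L,\O)=\lambda_p$.

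The regularity statements in (v) hinge on the identity $\l''(-(L+t),\O)=\l''(-L,\O)-t$ for real constants $t$, which is immediate from the definition. Monotonicity: if $h\geq 0$ and $\phi$ is admissible for $-(L+h)$ at level $\lambda$, then $(L+\lambda)\phi\leq-h\phi\leq 0$, so $\phi$ is admissible for $-L$ at the same level. Lipschitz continuity with constant $1$ then follows from monotonicity together with the translation identity applied to $c_1-\|c_1-c_2\|_\infty\leq c_2\leq c_1+\|c_1-c_2\|_\infty$. For concavity, given admissible pairs $(\phi_i,\lambda_i)$ for coefficients $c_i$ and $t\in[0,1]$, set $\phi:=\phi_1^t\phi_2^{1-t}$, so $\inf_\O\phi>0$; writing $\psi=\log\phi$ and using
\[
\frac{L\phi}{\phi}=a_{jk}\partial_{jk}\psi+b_j\partial_j\psi+a_{jk}\partial_j\psi\,\partial_k\psi+c,
\]
the convexity of the quadratic form $\xi\mapsto a_{jk}(x)\xi_j\xi_k$ (nonnegativity of $(a_{jk})$) yields $(L_0+tc_1+(1-t)c_2+t\lambda_1+(1-t)\lambda_2)\phi\leq 0$, where $L_0:=L-c$, proving concavity of $\l''$ in $c$.

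Finally, (vi) is obtained by the same exponential-weight argument used to prove Proposition 5.1 of \cite{BNV}, namely, a supersolution $\phi$ for $\l''(-L_1,\O)$ is replaced with $\tilde\phi(x)=e^{\alpha\cdot x}\phi(x)$ for a suitable vector $\alpha$ depending on $b^{(1)}-b^{(2)}$, converting it into a supersolution for the perturbed operator at a shifted eigenvalue level, with the shift controlled by the ellipticity and $\|c\|_\infty$. I expect the log-transform step in (v) and, especially, the preservation of the condition $\inf\tilde\phi>0$ under the exponential rescaling in (vi) to be the only points requiring genuine care; all other arguments are routine bookkeeping against the definition and the results already proved.
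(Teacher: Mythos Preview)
Your treatment of (ii), (iii), and (v) is correct and matches the paper's. For (i), your route via an enlarged domain $\O'$ and the exterior continuity result (Theorem~\ref{thm:l1dec}) is essentially what the paper does through Lemma~\ref{lem:infT>0}, so this is fine.

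There is, however, a genuine gap in your proof of (iv), in the direction $\l''(-L,\O)\geq\lambda_p$. You assert that the periodic principal eigenfunction $\vp_p$ is ``bounded above and below by positive constants on $\O$'', but this is false whenever $\partial\O\neq\emptyset$: by definition $\vp_p$ satisfies the Dirichlet condition $\vp_p=0$ on $\partial\O$, so $\inf_\O\vp_p=0$ and $\vp_p$ is \emph{not} admissible in \eqref{eq:l1''}. (Periodic domains with boundary certainly exist, e.g.\ slabs $\R^{N-1}\times(0,1)$.) The paper repairs exactly this point by invoking Proposition~\ref{pro:l1''=}, which says that under the relevant hypotheses the definition of $\l''$ is unchanged if one only requires $\inf_{\O^\e}\phi>0$ for every $\e>0$; the periodic eigenfunction does satisfy this relaxed condition, by periodicity and positivity in $\O$. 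Your argument for the reverse inequality $\l''\leq\lambda_p$ via Theorem~\ref{thm:relations}(iii) and Proposition~\ref{l1'}(iii) is fine and equivalent to the paper's maximum-principle contradiction (both rest on Theorem~\ref{thm:MP}).

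On (vi), the concern you flag is real and not merely a matter of care: the substitution $\tilde\phi(x)=e^{\alpha\cdot x}\phi(x)$ destroys the condition $\inf_\O\tilde\phi>0$ on any unbounded $\O$ as soon as $\alpha\neq0$, so the exponential-weight argument, as stated, cannot work here. The paper does not spell out the details either, merely citing \cite{BNV}; a correct argument must bypass the global exponential weight (for instance by working with an admissible $\phi$ that is actually a \emph{solution}, so that Harnack-type gradient bounds $|\nabla\phi|\leq C\phi$ are available, and then perturbing the drift directly).
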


\begin{proof}
The first inequality in property (ii) follows by taking $\phi\equiv1$ in
\eq{l1''}.
The second inequality in (ii), as well as property (iii), are immediate
consequences of the
definition.

(i) From (ii) and Proposition \ref{l1} part (i) it follows that
$\l''(-L,\O)\leq\lambda_\O$. The reverse inequality is a consequence of 
Lemma \ref{lem:infT>0}. Note that if $\O$ is of class $C^{2,1}$ then this
inequality also follows from the
characterization of Proposition \ref{pro:l1''=}, taking $\phi$ 
equal to the Dirichlet principal eigenfunction of $-L$ in $\O$.

(iv) We consider the periodic principal eigenfunction $\vp$ of $-L$ in $\O$,
under Dirichlet boundary conditions. Taking $\phi=\vp$ in the
characterization of Proposition \ref{pro:l1''=} yields
$\l''(-L,\O)\geq\lambda_p$.
Assume now by contradiction that $\l''(-L,\O)>\lambda_p$. Thus, by \thm{MP},
the operator $(L+\lambda_p)$ satisfies the MP in $\O$. This is in contradiction
with the existence of the periodic principal eigenfunction.

Properties (v), (vi) follow from the same arguments used to prove the analogous
properties for $\l$ (cf.~Propositions 2.1, 5.1 of \cite{BNV}).
\end{proof}

We now derive a result about the admissible functions $\phi$ in \eq{l1''}. It 
will be used in the sequel to obtain the sufficient conditions for the
equivalence of $\l$, $\l'$, $\l''$.

\begin{proposition}\label{pro:C1}
If $\O$ has a $C^{1,1}$ boundary portion $T\subset\partial\O$ 
then the definition \eq{l1''} of $\l''(-L,\O)$
does not change if one further requires
$\phi\in W^{2,p}_\loc(\O\cup T)$, $\fa p<\infty$.
\end{proposition}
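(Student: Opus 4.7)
The inequality in which the supremum obtained when one further restricts the admissible $\phi$ in \eqref{eq:l1''} to lie also in $W^{2,p}_\loc(\O\cup T)$ for every $p<\infty$ is $\le\l''(-L,\O)$ is immediate, since shrinking the class of admissible functions can only decrease the supremum. The task is the reverse: given $\mu<\l''(-L,\O)$ and any $\lambda<\mu$, I plan to exhibit an admissible $\t\phi$ at level $\lambda$ enjoying the extra regularity. By definition of $\l''$ there is $\phi\in W^{2,N}_\loc(\O)$ with $m:=\inf_\O\phi>0$ and $(L+\mu)\phi\le 0$ a.e. Then $(L+\lambda)\phi\le-(\mu-\lambda)m<0$, so $\phi$ is a \emph{strict} supersolution for $L+\lambda$; this gap is what will be used to absorb the errors introduced when modifying $\phi$ near $T$.

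\textbf{Construction.} Since $T$ is $C^{1,1}$, the distance $d(x):=\dist(x,T)$ belongs to $W^{2,\infty}(\{d<\e_0\})$ for some $\e_0>0$, with $|\nabla d|=1$ there. Set $v(x):=\cos(kd(x))$. The computation carried out in the proof of Proposition~\ref{pro:l1''=} gives, for $k$ large, $(L+\lambda)v\le -Kv$ a.e.\ on $\{d<\pi/(4k)\}$ with $K=K(k)$ arbitrarily large. Take smooth cutoffs $\chi,\eta$ depending only on $d$ with $\chi\equiv 0$ on $\{d\le\e\}$, $\chi\equiv 1$ on $\{d\ge 2\e\}$, $\eta\equiv 1$ on $\{d\le 3\e\}$, $\eta\equiv 0$ on $\{d\ge 4\e\}$, and $\e$ so small that $4\e<\e_0$ and $4k\e<\pi/2$. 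Set $\t\phi:=\chi\phi+hv\eta$ for a constant $h>0$ to be chosen. On the one-sided neighborhood $\{d\le\e\}$ of $T$ one has $\chi=0$ and $\eta=1$, so $\t\phi=hv\in W^{2,\infty}$ up to $T$; outside this strip, $\t\phi$ is a linear combination of $\phi$ (which by interior elliptic regularity is $W^{2,p}_\loc(\O)$ for every $p$, since $L\phi$ is locally bounded) and smooth functions. Hence $\t\phi\in W^{2,p}_\loc(\O\cup T)$ for all $p<\infty$, and $\t\phi\ge\min(h\cos(2k\e),m)>0$.

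\textbf{Supersolution check and main obstacle.} Expanding
\[
(L+\lambda)\t\phi = \chi\,(L+\lambda)\phi + h\eta\,(L+\lambda)v + R_\chi + hR_\eta,
\]
the commutator terms $R_\chi$ and $R_\eta$ (collecting everything involving $\nabla\chi,\nabla^2\chi$ and $\nabla\eta,\nabla^2\eta$ respectively) are supported in the annuli $\{\e<d<2\e\}$ and $\{3\e<d<4\e\}$. On $\{d\le\e\}$ both commutators vanish and the inequality reduces to $h(L+\lambda)v\le -hKv<0$. On the inner annulus $\eta\equiv 1$ forces $R_\eta=0$, and the dominant term $-hKv$ absorbs $R_\chi$ as soon as $h\gtrsim\|R_\chi\|_\infty/K$. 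On $\{2\e\le d\le 3\e\}$ both cutoffs are locally constant. On the outer annulus $\chi\equiv 1$ forces $R_\chi=0$, and the strict gap $-(\mu-\lambda)m$ coming from $\chi(L+\lambda)\phi$ absorbs $hR_\eta$ as soon as $h\lesssim (\mu-\lambda)m/\|R_\eta\|_\infty$. The main technical point is the joint feasibility of the two bounds: $R_\chi$ is bounded independently of $k$ while $K(k)\sim k^2$, and $\|R_\eta\|_\infty=O(k)$ because $|\nabla v|\sim k$, so the admissible range $[ck^{-2},c'k^{-1}]$ for $h$ is nonempty once $k$ is taken large enough. When $T$ is not compact the coefficients of $L$ need not be bounded on a tubular neighborhood of $T$, so the above construction is performed locally on a locally finite cover of $T$ by balls on which coefficient bounds and $W^{2,\infty}$ regularity of $d$ hold, and the local modifications are patched together.
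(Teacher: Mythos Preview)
There is a genuine gap in the regularity claim. You assert that away from the strip near $T$, the function $\phi$ itself already lies in $W^{2,p}_\loc(\O)$ for every $p<\infty$ ``by interior elliptic regularity, since $L\phi$ is locally bounded''. But $\phi$ is only a \emph{supersolution}: all that is given is $\phi\in W^{2,N}_\loc(\O)$ and $(L+\mu)\phi\le 0$, which bounds $L\phi$ from above, not from below. From $\phi\in W^{2,N}_\loc$ one only gets $L\phi\in L^N_\loc$, and there is no bootstrap: Calder\'on--Zygmund applied to $L\phi=f$ with $f\in L^N_\loc$ returns exactly $\phi\in W^{2,N}_\loc$. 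Hence your $\t\phi$, which coincides with $\phi$ on $\{d\ge 4\e\}$, is in general only in $W^{2,N}_\loc(\O)$, not $W^{2,p}_\loc(\O\cup T)$ for all $p$. The required condition fails already on compact subsets of the interior.

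A second, related issue: the absorption argument on the annulus $\{\e<d<2\e\}$ requires a pointwise bound on $R_\chi$, but $R_\chi$ contains the terms $2a_{ij}\partial_i\chi\partial_j\phi$ and $(a_{ij}\partial_{ij}\chi+b_i\partial_i\chi)\phi$. Nothing prevents $\phi$ from being unbounded (only $\inf\phi>0$ is assumed), and $\nabla\phi$ is merely in $L^q_\loc$ for finite $q$, not in $L^\infty_\loc$. So $\|R_\chi\|_\infty$ need not be finite, and the choice of $h$ may be vacuous. This difficulty, together with the absence of uniform ellipticity or global coefficient bounds in the hypotheses of Proposition~\ref{pro:C1}, is why the cutoff strategy of Proposition~\ref{pro:l1''=} does not transfer here.

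The paper's proof avoids both obstacles by abandoning the original $\phi$ altogether away from its role as an upper barrier: it constructs a \emph{new} function $u$ with $1\le u\le\phi$ solving a semilinear equation $Lu=|c(x)|g(u)$ whose right-hand side is in $L^\infty_\loc$. Because $u$ is a genuine solution with locally bounded source, interior and boundary $W^{2,p}$ estimates apply and yield $u\in W^{2,p}_\loc(\O\cup T)$ for every $p$. The price is that one must control the $u^n$ (solutions on exhausting subdomains) uniformly up to $T$ without an a~priori bound there; this is where the inhomogeneous boundary Harnack inequality (Proposition~\ref{pro:bHarnack}) enters.
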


\begin{proof}
We prove the statement by showing that, if for some $\lambda\in\R$ there exists
a function $\phi\in W^{2,N}_\loc(\O)$ satisfying
$$\inf_\O\phi>0,\qquad (L+\lambda)\phi\leq0\quad\text{a.e.~in }\O,$$
then we can find a function $u\in W^{2,p}_\loc(\O\cup T)$, $\fa p<\infty$,
with the same properties. The function $u$
will be obtained as a solution of a suitable nonlinear problem. 

First, by
renormalizing $\phi$ and replacing $c$ with $c+\lambda$, the problem is reduced
to the case where $\inf_\O\phi=2$ and $\lambda=0$. Consider the
function $f:\O\times\R\to\R$ defined by $f(x,s):=|c(x)|g(s)$,
where
$$g(s)=\left\{\begin{array}{ll}
-1 & \text{for }s\leq1\\
s-2 &  \text{for }s\in(1,2)\\
0 & \text{for }s\geq2.
               \end{array}\right.$$
Setting $\ul u\equiv1$, we find that, a.e.~in $\O$,
$$L \ul u\geq f(x,\ul u),\qquad
L\phi\leq f(x,\phi),\qquad \ul u<\phi.$$ Standard
arguments provide a function $u\in W^{2,p}_{loc}(\O)$
satisfying $1\leq u\leq\phi$ and $Lu=f(x,u)$ a.e.~in $\O$. 
More precisely, one constructs solutions of problems in bounded domains
invading $\O$ by an iterative method and then uses a diagonal extraction
procedure. 
However, getting the improved
regularity $u\in W^{2,p}_{loc}(\O\cup T)$
is delicate, especially because $\phi$ may blow up at $T$. 
Moreover, in order to pass to the unbounded domain, one needs a
version of the boundary Harnack inequality for solutions of
inhomogeneous problems. This is the
object of Appendix \ref{sec:bHarnack}. Let us now describe the method in detail.

The first step consists in solving semilinear problems in bounded 
domains with Dirichlet conditions on smooth portions of the boundary.
Namely, we derive the following

\begin{lemma}\label{lem:semilinear}
Let $\O$ be a bounded domain with a $C^{1,1}$ boundary portion $T$ and
let $f:\O\times\R\to\R$ be such that $f(\.,0)\in L^N(\O)$ and
$f(x,\.)$ is uniformly Lipschitz continuous in $\R$, uniformly with respect to
$x\in\O$.
Assume further that the problem
$$\left\{\begin{array}{ll}
Lu=f(x,u) & \text{a.e.~in }\O\\
u\uguale0 & \text{on }\partial\O
\end{array}\right.$$
has a subsolution $\ul u\in W^{2,N}_{loc}(\O)\cap L^\infty(\O)$ and a
supersolution 
$\ol u\in W^{2,N}_{loc}(\O)$ such that
$\ul u\leq0\leq\ol u$ in $\O$.
Then, there exists a function $u\in W^{2,N}_{loc}(\O\cup T)\cap L^\infty(\O)$
satisfying
$$\left\{\begin{array}{ll}
Lu=f(x,u) & \text{a.e.~in }\O\\
u=0 & \text{on }T\\
\ul u\leq u\leq\ol u & \text{in }\O.
\end{array}\right.$$
\end{lemma}

Note that, in the above statement, one can replace the 0 boundary conditions
with a more general datum $\psi\in W^{2,N}(\O)$, provided that $\ul u$, $\ol u$
satisfy $\ul u\leq\psi\leq\ol u$.
Let us postpone the proof of this Lemma until we complete the argument
to prove Proposition \ref{pro:C1}.
We assume that $0\in\O$. For $n\in\N$, let $\O_n$
denote the connected component of $\O\cap B_n$ containing $0$ and let $T_n$
be its portion of the boundary of class at least $C^{1,1}$. $T_n$ is open in the
topology
of $\partial\O_n$ and is nonempty for $n$ large enough.
Consider the functions $u^n\in W^{2,N}_{loc}(\O_n\cup T_n)\cap L^\infty(\O_n)$ 
provided by Lemma \ref{lem:semilinear} satisfying
$$\left\{\begin{array}{ll}
Lu^n=f(x,u^n) & \text{a.e.~in }\O_n\\
u^n=1 & \text{on }T_n\\
1\leq u^n\leq\phi& \text{in }\O_n.
\end{array}\right.$$
Since $|f(x,u^n)|\leq|c(x)|$ and $1\leq u^n\leq\phi$, 
using interior estimates and a diagonal argument, we find that the 
sequence $(u^n)_{n\in\N}$ converges
(up to subsequences) locally uniformly in $\O$ to a function $u
\in W^{2,N}_{loc}(\O)$ satisfying
$$Lu=f(x,u)\leq0\quad\text{a.e.~in }\O,\qquad 1\leq u\leq\phi\quad\text{in
}\O.$$ 
It remains to show that $u\in W^{2,p}_{loc}(\O\cup T)$, $\fa
p<\infty$. Let $K\subset\subset\O\cup T$.
The smoothness of $T$ implies that $K\subset\subset\O_m\cup T_m$, for $m$ large
enough. 
Take $\eta>0$ such that $\partial\O_m\cap(K+B_{2\eta})\subset T_m$.
Applying the inhomogeneous boundary Harnack inequality given by Proposition
\ref{pro:bHarnack}, with $\O=\O_m$ and $\O'=\O_m\cap(K+ B_\eta)$,
we find a constant $C$ such that
$$\fa n\geq m,\quad\sup_{\O_m\cap(K+ B_\eta)}u^n\leq
C(u^n(0)+1)\leq
C(\phi(0)+1).$$
Hence, by the local boundary estimate, $(u^n)_{n\in\N}$ is bounded in
$W^{2,p}(K)$ 
and then its limit $u$ belongs to $W^{2,p}(K)$.
\end{proof}

\begin{proof}[Proof of Lemma \ref{lem:semilinear}]
Replacing $c$ with $c-k$ and $f(x,s)$ with $f(x,s)-ks$ if need be, with $k$
greater than 
$\|c\|_{L^\infty(\O)}$ and the 
Lipschitz constant of $f(x,\.)$, it is not restrictive to assume that $c$ is
negative and that
$f(x,\.)$ is decreasing. From Proposition \ref{l1} parts (ii) and (iii)
it follows that $\l(-L,\mc{O})>0$ in any bounded domain
$\mc{O}$.
Hence, by Property A.5 in Appendix \ref{sec:BNV}, the problem
$$\left\{\begin{array}{ll}
Lu^1=f(x,\ul u) & \text{a.e.~in }\O\\
u^1\uguale0 & \text{on }\partial\O
\end{array}\right.$$
admits a unique bounded solution $u^1\in W^{2,N}_\loc(\O\cup T)$ (note that
$f(x,\ul u)\in L^N(\O))$. 
The function $\ul u$ is a subsolution of this problem and $\ol u$ is 
a supersolution by the monotonicity of $f(x,\.)$.
By the refined MP - Property A.2 - we get $\ul u\leq u^1$,
but we cannot infer that $u^1\leq\ol u$, because $\ol u$ may be unbounded.
However, since the solution $u^1$ is obtained as the limit of
solutions $(u^1_n)_{n\in\N}$ of the Dirichlet problem in a family of bounded
smooth domains invading $\O$ (see the proof of Theorem 1.2 in \cite{BNV}), 
the inequality $u^1\leq\ol u$ follows by applying the refined MP to the
functions
$\ol u-u^1_n$.
Proceeding as before, we construct by iteration a sequence $(u^j)_{j\in\N}$
in $W^{2,N}_\loc(\O\cup T)\cap L^\infty(\O)$
such that
$$\left\{\begin{array}{ll}
Lu^{j+1}=f(x,u^j) & \text{a.e.~in }\O\\
u^{j+1}\uguale0 & \text{on }\partial\O\\
u^j\leq u^{j+1}\leq\ol u & \text{in }\O.
\end{array}\right.$$
For $x\in\O$, let $u(x)$ be the limit of the nondecreasing sequence
$(u^j(x))_{j\in\N}$.
Let us show that the $u^j$ are uniformly bounded in $\O$.
We write 
$$Lu^j=f(x,0)+\zeta_j(x)u^j\quad
\text{a.e.~in }\O,\qquad\text{with }\ \zeta_j(x):=
\frac{f(x,u^j)-f(x,0)}{u^j}.$$
Since the $L^\infty$ norm of the $\zeta_j$ is less than or equal to the
Lipschitz constant of
$f(x,\.)$, applying the ABP estimate - Property A.6 - to $u^j$ and $-u^j$ we
infer
that $(u^j)_{j\in\N}$ is bounded in $L^\infty(\O)$. Therefore, by the local
boundary estimate,
$(u^j)_{j\in\N}$ is bounded in $W^{2,N}(K)$, for any $K\subset\subset\O\cup T$.
Whence, considering suitable subsequences of $(u^j)_{j\in\N}$ and applying the
embedding theorem, 
we derive $u\in W^{2,N}_\loc(\O\cup T)\cap L^\infty(\O)$, $Lu=f(x,u)$ a.e.~in
$\O$ 
and $u=0$ on $T$. This concludes the proof.
\end{proof}


\section{Relations between $\l,\ \l'$ and $\l''$}
\label{sec:relations}

This section is devoted to the proof of \thm{relations}. We
will start from statement (ii). In our previous work \cite{BR1}, we proved it in
dimension 1, using a direct argument, and we left the case
of arbitrary dimension as an open problem. Here, we solve it by subtracting
a quadratic penalization term that prevents solutions from being unbounded.

\begin{proof}[Proof of \thm{relations} part (ii)]
We prove the statement by showing that, for any given
$\lambda>\l(-L,\O)$, $\l'(-L,\O)\leq\lambda$. We can
assume without loss of generality that $\lambda=0$.
Since $\l(-L,\O)<0$, by Proposition \ref{l1} part (iv) there exists a bounded
smooth domain $\O'\subset\O$ such that $\lambda_{\O'}<0$. Let
$\vp'$ be the principal eigenfunction associated with
$\lambda_{\O'}$, normalized by
$$\|\vp'\|_{L^\infty(\O')}=\min\left(1,
-\frac{\lambda_{\O'}}{\|c\|_{L^\infty(\O')}}\right).$$ Then, the
functions $\ul u$, $\ol u$ defined by
$$\ul u(x):=\left\{\begin{array}{ll}\vp'(x) & \text{if }x\in\O'\\
0 & \text{otherwise},\end{array}\right.\qquad \ol u(x) :=1,$$
satisfy, a.e.~in $\O$,
$$L\ul u\geq c^+(x)\ul u^2,\qquad L\ol u\leq c^+(x)\ol u^2,
\qquad\ul u\leq\ol u,$$ 
where $c^+(x)=\max(c(x),0)$. Thus, there exists a solution $u\in
W^{2,p}_{loc}(\ol\O)$, $\fa p<\infty$, of the problem
$$
\left\{\begin{array}{ll}Lu=c^+(x)u^2 & \text{a.e.~in }\O\\
u=0 & \text{on }\partial\O,\end{array}\right.
$$
such that $\ul u\leq u\leq\ol u$ in $\O$ (note that $\ul u$ is a
``generalized subsolution'' of the above problem because it is the
supremum of two subsolutions). 
The existence of $u$ follows from the same arguments as 
in the proof of Proposition \ref{pro:C1}, but here is actually simpler 
because the supersolution $\ol u$ is bounded.
In particular, we see that $(L-c^+(x))u\leq0$
a.e.~in $\O$ and then the \SMP\ yields $u>0$ in $\O$. Taking
$\phi=u$ in \eq{l1'} we eventually derive $\l'(-L,\O)\leq0$.
\end{proof}

There is also a more direct, linear proof of \thm{relations} part (ii)
\footnote{The authors are grateful to a referee for suggesting this 
approach.}. The arguments, that we sketch now, make use of two 
independent results proved later on in this paper.
As before, the aim is to show that $\l(-L,\O)<0$ implies $\l'(-L,\O)\leq0$. 
Let $(L_n)_{n\in\N}$ be the following family of operators: 
$$L_n=a_{ij}(x)\partial_{ij}+ b_i(x)\partial_i+c_n(x),\qquad\text{with }
c_n(x):=\begin{cases}
           c(x) & \text{if }|x|<n\\
           \min(c(x),0) & \text{otherwise}.
          \end{cases}$$
Note that $\l(-L_n,\O)>-\infty$ by Proposition \ref{l1} part (ii)
and thus \thm{spettro} implies that a principal eigenfunction $\vp_1^n$ 
of $-L_n$ in $\O$ (satisfying the Dirichlet boundary condition) does exist.
Since $\l(-L,\O)<0$, it follows from Proposition \ref{thm:l1C} part (i) that 
$\l(-L_n,\O)<0$ for $n$ large enough.
Applying Proposition \ref{pro:c<0} part (ii) we deduce that $\vp_1^n$ is bounded 
for such values of $n$. Moreover, it satisfies
$$-L\vp_1^n\leq-L_n\vp_1^n=\l(-L_n,\O)\vp_1^n<0\quad\text{a.e.~in }\O.$$
We eventually infer that $\l'(-L,\O)\leq0$.

\begin{remark}
As a byproduct of the above proofs of \thm{relations} part (ii), we have shown 
that the set in definition \eq{l1'} is nonempty when $\O$ is smooth. If in
addition $c$ is bounded from above, the definition of $\l'(-L,\O)$
does not change if one restricts to subsolutions $\phi$ belonging
to $W^{2,p}_{loc}(\ol\O)$, $\fa p<\infty$. Indeed, if $\lambda,\
\phi$ satisfy the conditions in \eq{l1'}, then for any
$\t\lambda>\lambda$ we can argue as in the first proof, with $\ul u=\e\phi$,
$\e$ small enough, and find a positive bounded subsolution of the
Dirichlet problem for $L+\t\lambda$ in $\O$ satisfying the
stronger regularity conditions.
\end{remark}

\begin{proof}[Proof of \thm{relations} part (iii)]
Suppose that there exists $\lambda<\l''(-L,\O)$ and
$\phi\in W^{2,N}_\loc(\O)\cap L^\infty(\O)$ such that
 $$(L+\lambda)\phi\geq0\quad\text{a.e.~in }\O,
 \qquad \fa\xi\in\partial\O,\ \ \limsup_{x\to\xi}\phi(x)\leq0.$$
 Since
$$\l''(-(L+\lambda),\O)=\l''(-L,\O)-\lambda>0,$$
we know from \thm{MP} part (i) that the MP holds for the operator $(L+\lambda)$
in $\O$. As a consequence, $\phi\leq0$ in $\O$. This shows that
$\l'(-L,\O)\geq\l''(-L,\O)$.
\end{proof}


 \begin{proof}[Proof of \thm{relations} part (i)]
 Owing to statement (ii), we only need to prove that $\l(-L,\O)\leq\l'(-L,\O)$.
 That is, if $\lambda\in\R$ and $\phi\in
 W^{2,N}_\loc(\O)\cap L^\infty(\O)$ are such that
 $$\phi>0 \ \text{ in }\O,\qquad(L+\lambda)\phi\geq0 \ \text{ a.e.~in }\O,
 \qquad\fa\xi\in\partial\O,\ \ \lim_{x\to\xi}\phi(x)=0,$$
 then $\l(-L,\O)\leq\lambda$. This will be achieved by the use of the
variational formula \eq{RR}. Clearly, the infimum in \eq{RR} can be taken over
functions in
$H^1_0(\O)$ with compact support in $\ol\O$. Note, however, that since no
restriction is
imposed on the behavior of $c$ at infinity, one cannot consider
the whole space $H^1_0(\O)$.
Let $(\chi_r)_{r>1}$ be a family of cutoff functions
  uniformly bounded in $W^{1,\infty}(\R^N)$ and such that
 $$\fa r>1,\qquad\supp \chi_r\subset B_r,\qquad
 \chi_r=1\ \text{ in }B_{r-1}.$$ We can suppose that $\O\cap B_1\neq\emptyset$.
 The functions
 $\phi\chi_r$ belong to $H^1_0(\O\cap B_r)$.
%
%
 Thus, for $r>1$, we get
 \[\begin{split}
 \l(-L,\O) &\leq \frac{\int_{\O}\left[a_{ij}(x)\partial_i
 (\phi\chi_r)\partial_j(\phi\chi_r)- c(x)\phi^2\chi_r^2\right]}
 {\int_{\O}\phi^2\chi_r^2}\\
 &= \frac{\int_{\O}\left[a_{ij}(x)(\partial_i
 \phi)\chi_r\partial_j(\phi\chi_r)+a_{ij}(x)\phi(\partial_i
 \chi_r)\partial_j(\phi\chi_r)- c(x)\phi^2\chi_r^2\right]}
 {\int_{\O}\phi^2\chi_r^2}.
 \end{split}\]
 Integrating by parts the first term of the above sum yields
 \[\begin{split}
 \l(-L,\O) &\leq \frac{\int_{\O}\left[(-L\phi)\phi\chi_r^2
 -a_{ij}(x)(\partial_i\phi)(\partial_j\chi_r)\phi\chi_r
 +a_{ij}(x)\phi(\partial_i\chi_r)\partial_j(\phi\chi_r)\right]}
 {\int_{\O}\phi^2\chi_r^2}\\
 &\leq \lambda+\frac{ \int_{\O}a_{ij}(x)(\partial_i\chi_r)
 (\partial_j\chi_r)\phi^2} {\int_{\O}\phi^2\chi_r^2}.
 \end{split}\]
 Since $\chi_r=0$ outside $B_r$ and $\chi_r=1$ in
 $B_{r-1}$, we can then find a constant $k>0$, only depending on
 $\sup_{r>1}\|\chi_r\|_{W^{1,\infty}(\R^N)}$ and $\|a_{ij}\|_{L^\infty(\O)}$,
such that
 $$\fa r>1,\quad\l(-L,\O)\leq \lambda+k\frac{\int_{\O\cap(B_r\backslash
 B_{r-1})}
 \phi^2}{\int_{\O\cap B_{r-1}}\phi^2}.$$
 We obtain the desired inequality
 $\l(-L,\O)\leq \lambda$ from the above formula by showing that
$$\liminf_{r\to\infty}\frac{\int_{\O\cap(B_r\backslash
 B_{r-1})} \phi^2}{\int_{\O\cap B_{r-1}}\phi^2}=0.$$
Suppose by contradiction that there exists $\e>0$ such that
$$\fa n>2,\quad\frac{\int_{\O\cap(B_n\backslash
 B_{n-1})} \phi^2}{\int_{\O\cap B_{n-1}}\phi^2}\geq\e.$$
Hence, the sequence $j_n:=\int_{\O\cap B_{n-1}}\phi^2$ satisfies
$j_{n+1}-j_n\geq\e j_n$, that is, $j_n\geq j_2(1+\e)^{n-2}$.
This is impossible because $j_n$ grows at most at the rate $n^N$ as
$n\to\infty$.
 \end{proof}

\begin{remark}\label{rem:lb'=l}
 The previous proof shows that \thm{relations} part (i) holds, more in general,
 with $\l'(-L,\O)$ replaced by $\lambda_\beta'(-L,\O)$ (given by Definition
 \ref{def:l1b}) provided that 
 $$\fa \sigma>0,\quad
 \lim_{\su{x\in\O}{|x|\to\infty}}\beta(x)e^{-\sigma|x|}=0.$$
 On the other hand, if
 $\beta(x)=e^{\sigma|x|}$, $\sigma>0$,
 then one can check that the operator $Lu=u''$ satisfies
 $\l(-L,\R)=0>-\sigma^2=\lambda_\beta'(-L,\R)=\lambda_\beta''(-L,\R)$.
\end{remark}


\section{Conditions for the equivalence of the three notions}
\label{sec:equivalence}


\subsection{Proof of \thm{SC}, case 1}

We start with a preliminary consideration.

\begin{lemma}\label{lem:estimate}
Let $\O$ be bounded and $L$ be self-adjoint. If
$u\in H^1(\O)$, $\chi\in C^1(\ol\O)$ satisfy $u^+\chi\in H^1_0(\O)$,
$Lu\geq0$ in $\O$, then
$$\frac{\l(-L,\O)}{\max_{\ol\O}\ol\alpha}
\int_\O(u^+\chi)^2\leq\int_\O(u^+|\nabla\chi|)^2.$$
\end{lemma}

\begin{proof}
The result follows from a well known inequality which is an immediate 
consequence of the divergence theorem (see, e.g., \cite{DS84}, \cite{Pinch07}). 
Since $Lu\geq0$, we derive
\[\begin{split}
0 &\geq\int_\O a_{ij}(x)\partial_j u\partial_i(u^+\chi^2)-c(x)u u^+\chi^2\\
&= \int_\O a_{ij}(x)[\partial_j(u^+\chi)\partial_i(u^+\chi)
-u^+\partial_j\chi\partial_i(u^+\chi)+u^+\chi\partial_j u^+\partial_i\chi]
-c(x)(u^+\chi)^2\\
&= \int_\O a_{ij}(x)\partial_j(u^+\chi)\partial_i(u^+\chi)-c(x)(u^+\chi)^2
-(u^+)^2a_{ij}(x)\partial_i\chi\partial_j\chi\\
&\geq\l(-L,\O)\int_\O(u^+\chi)^2-\left(\max_{\ol\O}
\ol\alpha\right)\int_\O(u^+|\nabla\chi|)^2.
  \end{split}\]
\end{proof}


\thm{SC} trivially holds if $\l(-L,\O)=-\infty$. Hence, the case 1 is a 
consequence of the following result.

\begin{proposition}
Under the assumptions of \thm{SC} case 1, any $\lambda<\l(-L,\O)$ admits
a (positive) eigenfunction in $\O$ with positive exponential
growth.
\end{proposition}

\begin{proof}
{\em Case $N=1$ and $\O=\R$}.\\
For $n\in\N$, let $v_n$ be the solution of
$(L+\lambda)v_n=0$ in $(-n,n)$ satisfying $v_n(-n)=M>0$, $v_n(n)=0$, with $M>0$
such that $v_n(0)=1$.
Note that $\l(-L,(-n,n))>\lambda$ and then $v_n$ is positive by the \MP.
By elliptic estimates and Harnack's inequality, $(v_n)_{n\in\N}$
converges (up to subsequences) locally uniformly to a nonnegative solution $v$
of $(L+\lambda)=0$ in $\R$. Since $v(0)=1$, the \SMP\ implies that $v$ is
positive.
We apply Lemma \ref{lem:estimate} to $v_n$, with $\chi=0$ in $(-\infty,0]$
and $\chi=1$ in $[1,+\infty)$. We derive
\[\begin{split}\int_1^n v_n^2 &\leq\int_0^n(v_n\chi)^2 \leq
\frac{\sup_\R\ol\alpha}{\l(-L,(-n,n))-\lambda}\int_0^n(v_n\chi')^2\\
&\leq
\frac{\sup_\R\ol\alpha}{\l(-L,\R)-\lambda}\int_0^1(v_n\chi')^2.
\end{split}\]
Whence, letting $n\to\infty$, $\int_1^{+\infty}v^2<+\infty$.
Next, consider a family $(\chi_n)_{n\in\N}$ of smooth functions satisfying
$$\chi_n(x)=0\quad\text{for }|x|\geq n,\qquad
\chi_n(x)=1\quad\text{for }|x|\leq n-1,\qquad
|\chi_n'|\leq2\quad\text{in }\R.$$
Lemma \ref{lem:estimate} yields
$$\int_{n-1\leq|x|\leq n}v^2\geq\frac14\int_{\R}(v\chi_n')^2
\geq\frac{\l(-L,\R)-\lambda}{\ol\alpha}\int_{|x|\leq n-1}v^2.$$
There exist then $k,\e>0$ such
that
$$\forall n\in\N,\quad \int_{n-1\leq|x|\leq n}v^2\geq
k(1+\e)^n.$$
Since $\int_0^{+\infty}v^2<+\infty$, it follows that $\int_{-n\leq
x\leq-n+1}v^2\geq \frac k2(1+\e)^n$, for $n$ large enough.
Hence, we can find $-n<x_n<-n+1$ such that $v(x_n)\geq\sqrt k(1+\e)^{\frac n2}$.
By Harnack's inequality we deduce that $v$ has positive exponential growth at
$-\infty$. 
Changing $x$ in $-x$ in the coefficients of $L$ and applying the above arguments
yields the existence of a positive solution $w$ of $L+\lambda=0$ in $\R$ 
which has positive exponential growth at $+\infty$. The function $v+w$
is an eigenfunction associated with $\lambda$ with positive exponential
growth.

{\em Case $N=1$ and $\O$ is a half-line}.\\
We can assume, without loss of generality, that $\O=(0,+\infty)$.
Let $\lambda<\l(-L,\O)$ and let $u$ be a positive solution of $(L+\lambda)=0$
in $\O$ satisfying $u(0)=0$.
For $n\in\N$, applying Lemma \ref{lem:estimate} with $\chi=1$ in $[0,n-1]$,
$\chi=0$ in $[n,+\infty)$ and $|\chi'|\leq2$ in $[n-1,n]$, we obtain
$$\int_0^{n-1} u^2\leq\e\int_{n-1}^n u^2,$$
for some $\e$ independent of $n$. The same argument as above shows that $u$ has
exponential growth at $+\infty$.

{\em Case $N>1$ and $L$ is radially symmetric}.\\
For $\lambda<\l(-L,\R^N)$, there exists a positive solution $u$ of
$(L+\lambda)=0$ in $\R^N$, which in addition is radially symmetric.
Applying Lemma \ref{lem:estimate} with $\chi=1$ in $B_{n-1}$,
$\chi=0$ outside $B_n$ and $|\nabla\chi|\leq2$ in $B_n\backslash B_{n-1}$, we
get
$$\int_{B_{n-1}}u^2\leq\e\int_{B_n\backslash B_{n-1}}u^2,$$
for some $\e$ independent of $n$. As a consequence, $\int_{B_n\backslash
B_{n-1}}u^2$ grows exponentially in $n$ and then there exists a sequence
$(x_n)_{n\in\N}$ with the same property and $n-1<|x_n|<n$. The symmetry of $u$
together with Harnack's inequality imply that $u$ has exponential growth.
%
\end{proof}


\subsection{Continuity of $\l$ with respect to decreasing sequences of domains}
\label{sec:domain}

We know that $\l$ is continuous with respect to increasing sequences of
domains (see statement (iv) of Proposition \ref{l1}).
We now derive the continuity property for sequences of sets approaching the
domain from outside - \thm{l1dec}.

Let us first sketch how one can derive the property in the case $\O$ bounded
and smooth. Owing to the monotonicity of $\l$ with respect to the inclusion of
domains, it is sufficient to prove the result in the case
$\O_n=\bigcup_{x\in\O}B_{1/n}(x)$. To prove that
$\lambda^*:=\limn\l(-L,\O_n)=\l(-L,\O)$, one considers the Dirichlet principal
eigenfunction $\vp_1^n$ of $-L$ in $\O_n$, normalized by
$\|\vp_1^n\|_{L^\infty(\O_n)}=1$. By elliptic estimates, $(\vp_1^n)_{n\in\N}$
converges (up to subsequences) to a solution $\t\vp_1$ of
$-L\t\vp_1=\lambda^*\t\vp_1$ in $\O$. Moreover, since the $\O_n$ are uniformly
smooth for $n$ large, the
$C^1$ estimates up to the boundary yield $\t\vp_1=0$ on $\partial\O$
and $\|\t\vp_1\|_{L^\infty(\O)}=1$. Hence, $\t\vp_1$ is the
Dirichlet principal eigenfunction of $-L$ in $\O$, that is,
$\lambda^*=\l(-L,\O)$.

Three types of difficulties arise in the general case. First, 
if $\O$ is not smooth one has to consider generalized principal eigenfunctions
satisfying the Dirichlet boundary conditions in the relaxed sense of \cite{BNV}.
In particular, the $C^1$ boundary estimates are no
longer available and then the passage to the limit in
the boundary conditions is a subtle issue. Second, if $\O$ is unbounded
then it might happen that $\l(-L,\O_n)=-\infty$ for all $n\in\N$. Third,
in unbounded domains, the
existence of a (positive) eigenfunction vanishing on the boundary does not
characterize $\l$, as shown by \thm{spettro}. 

\begin{proof}[Proof of \thm{l1dec}]
The proof is divided into three steps.

{\em Step 1:\, reducing to domains with smooth boundary portions.}
\\
We want to replace $(\O_n)_{n\in\N}$
with a family of domains
$(\mc{O}_r)$ having uniformly smooth boundaries in a neighborhood
of $\ol{\O_1\backslash\O}$.
Let $U$ be a bounded neighborhood of $\ol{\O_1\backslash\O}$ such that
$\ol U\cap\partial\O$ is smooth.
Consider a nonnegative smooth function $\chi$ defined on $\ol U\cap\partial\O$
which is positive on $\ol{\O_1\backslash\O}\cap\partial\O$ and whose support is
contained in $U$. Then, for
$r>0$, define
$$\mc{O}_r:=\O\cup\{\xi+\delta\nu(\xi)\ :\ \xi\in U\cap\partial\O,\ 
0\leq\delta<\frac1r\chi(\xi)\}.$$
where $\nu(\xi)$ stands for the outer normal to $\O$ at $\xi$.
The smoothness of $U\cap \partial\O$ implies the existence of $r_0>0$
such that the $(\partial{\mc O}_r)_{r\geq r_0}$
are uniformly smooth in $U$.
It is left to the reader to show that, for all $n\in\N$, there exists $k_n\in\N$
such that $\O_{k_n}\subset\mc{O}_n$.
Hence, by Proposition \ref{l1}, the sequences
$(\l(-L,\O_n))_{n\in\N}$, $(\l(-L,\mc{O}_n))_{n\in\N}$ are nondecreasing
and satisfy
$$\lambda^*:=\limn\l(-L,\mc{O}_n)\leq
\limn\l(-L,\O_n)\leq\l(-L,\O).$$ 
To prove the result it is then sufficient to show that $\lambda^*\geq\l(-L,\O)$.
We argue by contradiction assuming that
$\lambda^*<\l(-L,\O)$. 

{\em Step 2:\, the case $\O$ bounded.}
\\
Let $\vp_1^n$ be the
generalized principal eigenfunction of $-L$ in
$\mc{O}_n$, provided by Property A.1, normalized by
$\|\vp_1^n\|_{L^\infty(\mc{O}_n)}=1$. For given
$\t\lambda\in(\lambda^*,\l(-L,\O))$, we have that $\l(-(L+\t\lambda),\O)>0$ and
that $(L+\t\lambda)\vp_1^n>0$ a.e.~in $\O$. Thus, the refined
Alexandrov-Bakelman-Pucci estimate -
Property A.6 - yields
$$\sup_{\O}\vp_1^n\leq\sup_{\mc{O}_n\cap\partial\O}\vp_1^n\left(1+
A(\sup_{\O}c^++\t\lambda)|\O|^{1/N}\right),$$
for some $A$ independent of $n$. On the other hand, for $n$ large
enough, every
$x\in\mc{O}_n\backslash\O$ satisfies
$\dist(x,\partial\mc{O}_n)\leq\frac1n\sup\chi$. Therefore, since $\vp_1^n$
vanishes on $\partial\mc{O}_n$, the local boundary estimate
and the uniform smoothness of $(\partial\mc{O}_n)_{n\geq r_0}$ in $U$ yield
$$\limn\sup_{\mc{O}_n\backslash\O }\vp_1^n=0.$$
We eventually get $\limn\|\vp_1^n\|_{L^\infty(\mc{O}_n)}=0$, which 
is a contradiction.

{\em Step 3:\, the general case.}
\\
Suppose that $0\in\O$. For $\rho>0$, let $\mc{B}_\rho$ denote the connected
component of $\O\cap B_\rho$ containing the origin.
Since $\ol U\cap\partial\O$ is smooth, a compactness argument shows that there
exists $\rho_0>0$ such that $(\O\cap U)\subset\mc{B}_{\rho_0}$.
Thus, for all $\rho\geq\rho_0$ and $n\in\N$,
the set $(\mc{O}_n\backslash\O)\cup\mc{B}_\rho$ is connected.
Fix $\t\lambda\in(\lambda^*,\l(-L,\O))$.
Proposition \ref{l1} part (iv) yields
$$\fa
n\in\N,\quad\lim_{\rho\to\infty}\l(-L,(\mc{O}_n\backslash\O)\cup\mc{B}_\rho)=
\l(-L,\mc{O}_n)\leq\lambda^*<\t\lambda.$$
It is then possible to choose $\rho_n>\rho_0$ in such a way that
$\l(-L,(\mc{O}_n\backslash\O)\cup\mc{B}_{\rho_n})<\t\lambda$. We can assume,
without loss of generality, that the sequence $\seq{\rho}$ is increasing and
diverging. For fixed $n\in\N$, $n\geq r_0$, consider the following mapping:
$$\Theta(r):=\l(-L,(\mc{O}_r\backslash\O)\cup\mc{B}_{\rho_n}).$$
We know that $\Theta(n)<\t\lambda$ and, by step 2 and Proposition
\ref{l1} part (iv), that $\Theta$ is continuous on $[n,+\infty)$
and satisfies
$$\lim_{r\to+\infty}\Theta(r)=\l(-L,\mc{B}_{\rho_n})>\l(-L,\O)>\t\lambda.$$
Hence, there exists $r_n>n$ such that
$\Theta(r_n)=\t\lambda$.
We set $\t{\mc{O}}_n:=(\mc{O}_{r_n}\backslash\O)\cup\mc{B}_{\rho_n}$.
By \thm{spettro} and Remark \ref{rem:portion}, there exists a function
$\vp_1\in W^{2,p}_{loc}(\O\cup(U\cap\partial\O))$, $\fa p<\infty$, satisfying
$$\vp_1>0\quad\text{in }\O,\qquad
-L\vp_1=\l(-L,\O)\vp_1\quad\text{a.e.~in
}\O,\qquad
\vp_1=0\quad\text{on }U\cap\partial\O.$$
Let $\t\vp_1^n$ be the generalized principal eigenfunction
in $\t{\mc{O}}_n$, vanishing on $U\cap\partial\t{\mc{O}}_n$, given by Property
A.1. We now use the following

\begin{lemma}\label{lem:Hopf}
Let $\mc{O}$ and $K$ be an open and a compact subset of $\R^N$
such that $T:=\partial\mc{O}\cap(K+B_\e)$ is smooth for some
$\e>0$, and let $v\in W^{2,N}_{loc}(\mc{O})$ be positive and satisfy
$Lv\leq0$ a.e.~in $\mc{O}$.
Then, there exists a positive constant $h$ such that
$$\sup_{\mc{O}\cap K}\frac u v\leq h\|u\|_{W^{1,\infty}(\mc{O}\cap(K+B_\e))},$$
for all $u\in C^1(\mc{O}\cup T)$ satisfying $u\leq0$ on $T$.
\end{lemma}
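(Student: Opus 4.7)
I would split $\mc{O}\cap K$ into an ``interior core'' where $v$ has a positive lower bound and a ``boundary layer'' near $T$ where both $u$ and $v$ vanish linearly in the distance to $T$. Fix $\delta_0\in(0,\e/2)$ to be chosen later. If $x\in\mc{O}\cap K$ satisfies $\dist(x,\partial\mc{O})<\e$, any nearest point on $\partial\mc{O}$ lies in $K+B_\e$, hence on $T$. Thus $K_0:=\{x\in\mc{O}\cap K:\dist(x,\partial\mc{O})\geq\delta_0\}$ is a compact subset of $\mc{O}$, while on the complement $K_1:=(\mc{O}\cap K)\setminus K_0$ one has $\dist(x,T)=\dist(x,\partial\mc{O})<\delta_0$. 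On $K_0$, the continuous positive function $v$ attains a positive minimum $m_0$, so $u/v\leq m_0^{-1}\|u\|_{L^\infty(\mc{O}\cap(K+B_\e))}$ there.

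For the boundary layer I would use a Hopf-type barrier. By compactness of $T\cap\ol{(K+\ol{B_{\e/2}})}$ and the $C^{1,1}$ regularity of $T$, there is a uniform interior-ball radius $r\in(0,\e/2)$: for every $\xi$ in this set there exists $y_\xi$ with $B_r(y_\xi)\subset\mc{O}$ and $\ol{B_r(y_\xi)}\cap\partial\mc{O}=\{\xi\}$. Taking $\delta_0<r/2$, any $x\in K_1$ with nearest boundary point $\xi\in T$ lies on the inward-normal segment $[\xi,y_\xi]$ and inside the annulus $A_\xi:=B_r(y_\xi)\setminus\ol{B_{r/2}(y_\xi)}$. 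The standard barrier $\psi_\xi(x):=e^{-\alpha|x-y_\xi|^2}-e^{-\alpha r^2}$, with $\alpha$ large (depending only on $r$ and the $L^\infty$ bounds of the coefficients of $L$ on the bounded set $\bigcup_\xi\ol{B_r(y_\xi)}$), satisfies $L\psi_\xi\geq 0$ in $A_\xi$, vanishes on $\partial B_r(y_\xi)$, and obeys $\psi_\xi(x)\geq c_1|x-\xi|$ along $[\xi,y_\xi]$ for a uniform $c_1>0$. The set $\mc{K}:=\bigcup_\xi\ol{B_{r/2}(y_\xi)}$ is compactly contained in $\mc{O}$, so $m_1:=\inf_{\mc{K}}v>0$. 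Setting $\kappa:=m_1/\sup_\xi\max_{\ol{B_r(y_\xi)}}\psi_\xi$, the inequality $\kappa\psi_\xi\leq v$ holds on $\partial A_\xi\setminus\{\xi\}$; then the maximum principle applied to $\kappa\psi_\xi-v$ on $A_\xi\setminus B_\rho(\xi)$ (letting $\rho\to 0$, using $\psi_\xi(\xi)=0$ to absorb the isolated singularity) yields $v\geq\kappa\psi_\xi$ throughout $A_\xi$. In particular $v(x)\geq\kappa c_1\dist(x,T)$ for every $x\in K_1$.

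Finally, since $u\in C^1(\mc{O}\cup T)$ with $u\leq 0$ on $T$, and the segment from $\xi$ to $x$ lies in $\mc{O}\cup T$ and inside $K+B_\e$, integration of $\nabla u$ along this segment gives $u(x)\leq\|\nabla u\|_{L^\infty(\mc{O}\cap(K+B_\e))}\dist(x,T)$ for $x\in K_1$. Combining with the lower bound for $v$, we obtain $u/v\leq(\kappa c_1)^{-1}\|\nabla u\|_{L^\infty(\mc{O}\cap(K+B_\e))}$ on $K_1$, which together with the core estimate yields the lemma with $h:=m_0^{-1}+(\kappa c_1)^{-1}$. The main obstacle is the \emph{uniformity} of the Hopf construction: one must ensure that $r$, $\alpha$, $c_1$ and the inner-sphere infimum $m_1$ are all uniform in $\xi\in T\cap\ol{(K+\ol{B_{\e/2}})}$. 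Compactness of that set together with the $C^{1,1}$ regularity of $T$ produces the uniform geometric constants; continuity and positivity of $v$ on the compact subset $\mc{K}\subset\subset\mc{O}$ gives $m_1>0$; and the single-point boundary singularity at $\xi$ is handled by the standard exhaustion argument.
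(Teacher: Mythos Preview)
Your direct approach (core/boundary-layer splitting) is sound and runs on the same engine as the paper's proof --- a Hopf-type barrier comparison giving $v(x)\gtrsim\dist(x,T)$ near $T$, combined with $u(x)\leq\|\nabla u\|_\infty\dist(x,T)$ from $u\leq0$ on $T$. The paper instead argues by contradiction with a normalizing sequence $(u^n,x_n)$, but the barrier step is identical. Your version is more constructive and makes the uniformity in $\xi$ explicit, which is nice.

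There is, however, one point you gloss over that the paper handles carefully. When you write ``the maximum principle applied to $\kappa\psi_\xi-v$ on $A_\xi\setminus B_\rho(\xi)$'', you must say \emph{why} the MP holds there: the zero-order coefficient $c$ has no sign, so the classical MP is unavailable. This is exactly where the positive supersolution $v$ does double duty in the paper's argument: $v>0$ with $Lv\leq0$ in $\mc{O}$ gives $\l(-L,\mc{O})\geq0$, and then Proposition~\ref{l1}(iii) yields the \emph{strict} inequality $\l(-L,A_\xi)>0$ (since $A_\xi$ is bounded and $|\mc{O}\setminus A_\xi|\geq|B_{r/2}(y_\xi)|>0$), whence Property~A.2 supplies the MP. Your exhaustion by $A_\xi\setminus B_\rho(\xi)$ is also a bit delicate: on $\partial B_\rho(\xi)\cap A_\xi$ the function $w:=\kappa\psi_\xi-v$ is only $\leq\kappa\psi_\xi\to0$, not $\leq0$, and with general $c$ you cannot subtract a constant and remain a subsolution. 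Once $\l(-L,A_\xi)>0$ is established, it is cleaner to apply the MP directly in $A_\xi$, noting that $\limsup_{x\to\xi}w(x)\leq\lim_{x\to\xi}\kappa\psi_\xi(x)-\liminf_{x\to\xi}v(x)\leq0$ since $\psi_\xi(\xi)=0$ and $v>0$.
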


Let us postpone the proof of Lemma \ref{lem:Hopf} and continue
with the one of \thm{l1dec}.
Consider a neighborhood $V$ of $\supp\chi$ such that
$\ol V\subset U$.
Applying Lemma \ref{lem:Hopf} with $\mc{O}=\O\cap U$, $K=\ol\O\cap\partial
V$, $u=\t\vp_1^n$ and $v=\vp_1$, we see that it is possible to normalize the
$\t\vp_1^n$ in such a way
that
\Fi{zn} \fa n\geq r_0,\quad\inf_{\O\cap\partial
V}\frac{\vp_1}{\t\vp_1^n}=1. \Ff
Note that the generalized
\pe\ $\l$ of $-(L+\t\lambda)$ is positive in any connected component of
$\O\cap\t{\mc{O}}_n\backslash\ol V$. Hence, owing to Property A.2,
$\vp_1\geq\t\vp_1^n$ in $\O\cap\t{\mc{O}}_n\backslash\ol V$ by the
refined MP.
Moreover, since the $(\partial\t{\mc{O}}_n)_{n\geq
r_0}$ are uniformly smooth in $U$, using the boundary Harnack inequality and
the local boundary estimate we infer that, for any compact $K\subset \O\cup
(U\cap\partial\O)$, the
$\t\vp_1^n$ are uniformly bounded in $W^{2,p}(K)$ for $n$ large enough. Thus, by
Morrey's inequality, they converge (up
to subsequences) in $C^1_{loc}(\O\cup(U\cap\partial\O))$ to a nonnegative
function
$\t\vp_1\in W^{2,p}_{loc}(\O\cup(U\cap\partial\O))$ satisfying
\Fi{tilde}
\left\{\begin{array}{ll}
-L\t\vp_1=\t\lambda\t\vp_1 & \text{a.e.~in }\O\\
\t\vp_1=0 & \text{on }U\cap\partial\O.\\
\end{array}\right.
\Ff
Furthermore, $\t\vp_1\leq\vp_1$ in $\O\backslash V$ and then in the whole $\O$
by the refined MP.
Therefore, the difference $\vp_1-\t\vp_1$ is a nonnegative strict supersolution
of
\eq{tilde}. The strong \MP\ implies $\vp_1-\t\vp_1>0$ in
$\O$.
Applying Lemma \ref{lem:Hopf} with $u=\t\vp_1$, $v=\vp_1-\t\vp_1$ and
$L=L+\t\lambda$, we can find a positive constant $h$ such that
$\t\vp_1\leq h(\vp_1-\t\vp_1)$ in $\ol\O\cap\partial V$, i.e.,
$\vp_1\geq(1+h^{-1})\t\vp_1$.
Since $\t\vp_1^n$ converges to $\t\vp_1$ in
$C^1_{loc}(\O\cup(U\cap\partial\O))$,
using again Lemma \ref{lem:Hopf} we can choose $n$
large enough in such a way that $(2h+2)^{-1}\vp_1\geq\t\vp_1^n-\t\vp_1$
in $\O\cap\partial V$. Gathering together
these inequalities we derive
$$\vp_1\geq(1+h^{-1})(\t\vp_1^n-(2h+2)^{-1}\vp_1)=
(1+h^{-1})\t\vp_1^n-(2h)^{-1}\vp_1\quad\text{in }\O\cap\partial V.$$
This contradicts \eq{zn}.
\end{proof}

It remains to prove Lemma \ref{lem:Hopf}. It is
essentially a consequence of the Hopf lemma, even though the hypothesis on $v$
does not allow one to apply it in its classical form.

\begin{proof}[Proof of Lemma \ref{lem:Hopf}]
Assume by contradiction that there exist a sequence of functions
$(u^n)_{n\in\N}$ with $u^n\leq0$
on $T$, $\|u^n\|_{W^{1,\infty}(\mc{O}\cap(K+B_\e))}=1$ and a
sequence of points $\seq{x}$ in $\mc{O}\cap K$ such that
$u^n(x_n)>nv(x_n)$. Let $\xi$ be the limit of (a subsequence of)
$\seq{x}$. It follows that $\xi\in \partial\mc{O}\cap K$. Let $\xi_n$ be one of
the projections of $x_n$ on $\partial\mc{O}$. Clearly, $x_n,\xi_n\in K+B_\e$ for
$n$ large enough. Thus,
\Fi{ri}
\limsup_{n\to\infty}\frac{v(x_n)}{|x_n-\xi_n|}\leq\limsup_{n\to\infty}\frac{
u^n(x_n)-u^n(\xi_n)}{n|x_n-\xi_n|}=0.
\Ff
On the other hand, since $T$ is smooth, there exists $R>0$
such that $\mc{O}$ satisfies the interior sphere condition of radius
$R$ at the points $\xi_n$, for $n$ large enough. That is, $x_n\in
B_R(y_n)\subset\mc{O}$,
where $y_n:=\xi_n-R\nu(\xi_n)$ and  Fix $\rho\in(0,R)$. The existence of the
positive supersolution $v$,
together with Proposition \ref{l1} part (iii), imply that
$0\leq\l(-L,\mc{O})<\l(-L,B_R(y_n)\backslash B_\rho(y_n))$.
Therefore, owing to Property A.2 in Appendix \ref{sec:BNV},
one can follow the standard argument used to prove the Hopf lemma (see, e.g.,
\cite{Max} or Lemma 3.4 in \cite{GT}), comparing $v$ with an exponential
subsolution, and find a positive constant $\kappa$
such that, for $n$ large enough,
$$\fa x\in B_R(y_n)\backslash B_\rho(y_n),\quad
\frac{v(x)}{R-|x-y_n|}\geq\kappa\min_{\partial B_\rho(y_n)}v.$$
This contradicts \eq{ri}.
\end{proof}

\thm{l1dec} does not hold in general if
$\O_1\backslash\O$ is not bounded, as shown by Example 1.9 in \cite{Sznitman}.
The smoothness hypothesis on $\partial\O$ is also necessary, because it is
possible to
find two bounded domains $\O\subset\O'$ satisfying $\ol\O=\ol{\O'}$ and 
$\l(-\Delta,\O)>\l(-\Delta,\O')$ (see Remark
\ref{rem:strict} below). Hence, the sequence $\seq{\O}$
identically
equal to $\O'$ violates the convergence result.


\begin{remark}\label{rem:strict}
 If $\O\subset\O'$ are bounded and there exist $\xi\in\O'\cap\partial\O$ and
$\delta>0$ such that $\O\cap B_\delta(\xi)$ has a connected component $U$
satisfying the exterior cone condition at $\xi$ (or, more generally, admitting a
strong barrier at $\xi$) then $\l(-L,\O)>\l(-L,\O')$. To see this, consider the
generalized principal eigenfunctions $\vp_1$ and $\vp'_1$ of $-L$ in $\O$ and
$\O'$ respectively, given by Property A.1.
The function $\vp_1$ can be obtained as the limit of the classical
Dirichlet principal eigenfunctions of $-L$ in a family of smooth domains
invading $\O$, normalized by $\|\.\|_\infty=1$. As a consequence, the existence
of the barrier function at $\xi$ yields
$\lim_{\su{x\in U}{x\to\xi}}\vp_1(x)=0$. Since $\vp_1'(\xi)>0$ by the strong
\MP, we infer that $\vp_1$ and $\vp_1'$ are linearly independent. Therefore, 
Property A.4 implies that $\l(-L,\O)>\l(-L,\O')$.
 
Note that $\O$, $\O'$ fulfill the above property as soon as
$\O'\backslash\O$ contains a $N-1$-dimensional Lipschitz manifold.
\end{remark}

\begin{remark}
If $\O$ is bounded then the arguments in the proof of \thm{l1dec} work, with
minor modifications, only assuming that $\partial\O$ is Lipschitz
in a neighborhood of $\ol{\O_1\backslash\O}$. We do not know if the result
holds for unbounded Lipschitz domains.
\end{remark}

The first step of the proof of \thm{l1dec} consists in showing that
the $\O_n$
approach $\O$ in the sense of the Hausdorff distance $d_H$
\footnote{For $A, B\subset\R^N$, $d_H(A,B):=\max\left(\rho(A,B),\rho(B,A)
\right)$,
where
$\displaystyle\rho(A,B):=\sup_{a\in A}\inf_{b\in B}|a-b|$.}. Remark
\ref{rem:strict} shows that, in the non-smooth case, $\l(-L,\.)$ is not
continuous with respect to $d_H$, and this is why the result of \thm{l1dec} may
fail in that case. Note, however, that the domains $\O$, $\O'$ in Remark
\ref{rem:strict}
satisfy $d_H(\O^c,(\O')^c)>0$.
The Hausdorff distance between the complements is a better suited notion of
distance for open sets (it implies for instance that if
$d_H(\O_n^c,\O^c)\to0$ then
$\,\inter(\bigcap_{n\in\N}\O_n)\subset\O\subset\bigcup_{n\in\N}\O_n$).
A consequence of a $\gamma$-convergence result by \v{S}ver{\'a}k
\cite{Sverak} is that if $N=2$, $L$ is self-adjoint and $(\O_n)_{n\in\N}$ is a
sequence of uniformly bounded domains, such that the number of connected
components of $\O_n^c$ is uniformly bounded and $\limn
d_H(\O_n^c,\O^c)=0$, then $\limn\l(-L,\O_n)=\l(-L,\O)$. We refer to \S 2.3.3
in \cite{Henrot} for other
continuity results for self-adjoint operators in bounded domains obtained via
$\gamma$-convergence.
Always in the case of bounded domains, A.-S.
Sznitman proves in \cite{Sznitman}, Proposition 1.10, using a probabilistic
approach, that the continuity of
$\l$ with respect to decreasing sequences of domains $(\O_n)_{n\in\N}$ holds
without any smoothness hypothesis on $\partial\O$, provided that
$\bigcap_n\O_n=\O$. This hypothesis, which is stronger than
$\limn d_H(\O_n^c,\O^c)=0$, is quite
restrictive because, in general, $\bigcap_n\O_n$ is not an open set.

\subsection{Proof of \thm{SC}, cases 2-4}

Below, we give a characterization of $\l''$ which provides a
necessary and sufficient condition for the equivalence between $\l$, $\l'$ and
$\l''$. This characterization emphasizes that $\l''$ strongly
reflects the properties of the operator at both finite distance and 
infinity.

\begin{theorem}\label{thm:equivalence}
If $\O$ is unbounded and smooth then
$$\l''(-L,\O)=\min\left(\l(-L,\O),\lim_{r\to\infty}\l''(-L,\O\backslash\ol{B_r}
)\right).$$
As a consequence, $\l(-L,\O)=\l''(-L,\O)$ ($=\l'(-L,\O)$ if \eq{ABC3} holds)
iff
\Fi{l1<l1''r}
\lim_{r\to\infty}\l''(-L,\O\backslash\ol{B_r})\geq\l(-L,\O).
\Ff
\end{theorem}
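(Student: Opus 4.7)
The inequality $\l''(-L,\O)\leq\min\bigl(\l(-L,\O),\,\lim_{r\to\infty}\l''(-L,\O\backslash\ol{B_r})\bigr)$ is immediate: Proposition \ref{pro:l1''}(ii) gives $\l''\leq\l$, while the monotonicity in Proposition \ref{pro:l1''}(iii) applied to the decreasing family $\{\O\backslash\ol{B_r}\}_r$ shows that $r\mapsto\l''(-L,\O\backslash\ol{B_r})$ is nondecreasing (so the limit exists in $(-\infty,+\infty]$) and that every term of this sequence, hence also its limit, dominates $\l''(-L,\O)$.

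For the reverse inequality, fix $\lambda$ strictly less than the right-hand side and let us construct a function $\phi$ witnessing $\l''(-L,\O)\geq\lambda$. The hypothesis produces two inputs: (i) an $r>0$ and $\psi\in W^{2,N}_\loc(\O\backslash\ol{B_r})$ with $\inf\psi\geq 1$ (after rescaling) and $(L+\lambda)\psi\leq 0$ a.e.; (ii) for every bounded $D\subset\O$ containing $\O\cap\ol{B_{r+2}}$, Proposition \ref{l1}(iii) gives $\l(-L,D)\geq\l(-L,\O)>\lambda$, so $L+\lambda$ enjoys the refined maximum principle on $D$ and inhomogeneous Dirichlet problems are solvable there via Property A.5. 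My strategy is to glue $\psi$ at infinity with a local supersolution coming from (ii) by means of a correction solving a zero-boundary Dirichlet problem on increasing bounded domains. Concretely, I would take an extension $\t\psi\in W^{2,N}_\loc(\O)$ of $\psi$ with $\inf\t\psi\geq 1$ and, for each large $R$, let $v_R\in W^{2,N}_\loc(D_R)$ solve
\[
(L+\lambda)v_R=-\bigl[(L+\lambda)\t\psi\bigr]^+\text{ in }D_R,\qquad v_R\uguale 0\text{ on }\partial D_R,
\]
where $D_R:=\O\cap B_R$. The right-hand side is bounded and compactly supported in $D_R\cap B_{r+2}$; by Property A.5 and the refined MP, $v_R$ is well-defined and nonnegative, and a standard comparison across $R$ makes $(v_R)_R$ nondecreasing. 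Setting $\phi_R:=\t\psi+v_R$ yields $\inf_{D_R}\phi_R\geq 1$ and $(L+\lambda)\phi_R\leq 0$ in $D_R$ by construction.

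The heart of the proof is showing that $v_R$ stays uniformly bounded on compact subsets of $\O$ as $R\to\infty$; once this is done, the monotone limit $v$ is finite, and $\phi:=\t\psi+v\in W^{2,N}_\loc(\O)$ is the desired admissible function, with $\inf_\O\phi\geq 1$ and $(L+\lambda)\phi\leq 0$. This uniform bound is where the coercivity at infinity encoded in $\lambda<\lim_{r\to\infty}\l''(-L,\O\backslash\ol{B_r})$, together with the extension of the boundary Harnack inequality to inhomogeneous Dirichlet problems (Appendix \ref{sec:bHarnack}), enters decisively: on the annular region $D_R\backslash\ol{B_{r+2}}$, where $(L+\lambda)v_R=0$ and $v_R$ vanishes on the outer and on $\partial\O$-portions of the boundary, one compares $v_R$ to a multiple of the coercive supersolution $\psi$ by means of the refined MP, using the inhomogeneous boundary Harnack to propagate bounds uniformly across the boundary portion $\partial\O\cap(B_R\backslash\ol{B_{r+2}})$ where classical interior estimates alone fail. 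This is also the principal obstacle of the proof: without the inhomogeneous boundary Harnack one cannot uniformly control $v_R$ near $\partial\O$, and the monotone limit might explode.

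Finally, for the stated consequence: by the main identity, $\l''(-L,\O)=\l(-L,\O)$ iff $\l(-L,\O)$ is the minimum on the right-hand side, i.e., iff \eq{l1<l1''r} holds. Under the growth condition \eq{ABC3}, Theorem \ref{thm:relations}(ii)--(iii) provide the chain $\l''(-L,\O)\leq\l'(-L,\O)\leq\l(-L,\O)$, so $\l''=\l$ at once forces $\l'=\l$ as well.
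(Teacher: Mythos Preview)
Your easy inequality and the final ``consequence'' paragraph are fine, and the overall idea of patching an outer supersolution $\psi$ with a compactly generated correction is reasonable. The gap is precisely where you flag the ``principal obstacle'': the uniform-in-$R$ bound on $v_R$. The inhomogeneous boundary Harnack inequality (Proposition~\ref{pro:bHarnack}) only yields a ratio control of the form $\sup_{\O'}v_R\leq C\bigl(\inf_{D^\delta}v_R+\|f\|_{L^N}\bigr)$ on any fixed bounded piece $D$ of $\O$; it does not give an absolute bound. Coupling this with the comparison $v_R\leq M_R\psi$ on the annulus (where $M_R=\sup_{\O\cap\partial B_{r+3}}v_R$, obtained from the refined MP) one arrives at $M_R\leq C\,M_R\,\psi(y)+C\|f\|$ for a fixed point $y$, which is useless because $\psi(y)\geq1$ and there is no reason for $C<1$. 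Thus the loop does not close: the Harnack estimate bounds the supremum by the infimum, while the MP comparison bounds the infimum by the supremum, and neither yields a quantity bounded independently of $R$.

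What is actually needed to break the circularity is a barrier that is a strict supersolution of $L+\lambda$ on \emph{all} of $\O$ and that stays bounded away from zero on the compact support of $(L+\lambda)\t\psi^{\,+}$, including along $\partial\O\cap\ol{B_{r+2}}$. This is exactly what the paper supplies via Lemma~\ref{lem:infT>0} (itself resting on Theorem~\ref{thm:l1dec}): a function $\t\vp>0$ on $\O\cup T$ with $-(L+\t\lambda)\t\vp=0$ for some $\t\lambda\in(\lambda,\l(-L,\O))$. With such a $\t\vp$, one can either compare $v_R$ against $K\t\vp$ uniformly in $R$ (choosing $K$ so that $K(\t\lambda-\lambda)\t\vp\geq[(L+\lambda)\t\psi]^+$ on its support, which is possible because $\inf_{\O\cap\ol B_{r+2}}\t\vp>0$), or---as the paper does---bypass the Dirichlet-limit construction entirely and take $u=\t\vp+\e\chi\phi$ directly. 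In short, your scheme can be salvaged, but the salvage requires the same tool (Lemma~\ref{lem:infT>0}/Theorem~\ref{thm:l1dec}) that drives the paper's proof; the inhomogeneous boundary Harnack alone is not enough.
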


\begin{proof}
We first note that definitions \eq{l1} and \eq{l1''} make good sense even if
$\O$ is not connected, and that statements (ii), (iii) of Proposition
\ref{pro:l1''} still hold in this case.
Thus, the function $\lambda''(r):=\l''(-L,\O\backslash\ol{B_r})$ is
nondecreasing with respect to $r$ and satisfies
$$\l''(-L,\O)\leq\lim_{r\to\infty}\lambda''(r)\leq+\infty.$$
Hence, since $\l''(-L,\O)\leq\l(-L,\O)$ by definition, we find that
$$\l''(-L,\O)\leq\min\left(\l(-L,\O),\lim_{r\to\infty}\lambda''(r)\right).$$
To prove the reverse inequality, let us show that
if there exists $\lambda\in\R$ satisfying
$$\lambda<\min\left(\l(-L,\O),\lim_{r\to\infty}\lambda''(r)\right),$$
then $\l''(-L,\O)\geq\lambda$.
Take $R>0$ such that $\lambda''(R)>\lambda$.
We first prove the result in the case $\O=\R^N$. The proof in the general
case is more involved and makes use of an auxiliary result - Lemma
\ref{lem:infT>0} below - derived from \thm{l1dec}.

Since $\lambda''(R)>\lambda$, there exists $\phi\in W^{2,N}_{loc}(\R^N\backslash
\ol B_R)$
with positive infimum and such that $(L+\lambda)\phi\leq0$ a.e.~in $\R^N\backslash
\ol B_R$.
By Proposition \ref{pro:C1} and Morrey's inequality, we can assume without loss of
generality that $\phi\in C^1(B_{R+1}^c)$, where $B_{R+1}^c=\R^N\backslash
B_{R+1}$.
Let $\vp$ be an eigenfunction associated with $\l(-L,\R^N)$
(provided by statement (v) of Proposition \ref{l1}) and $\chi\in C^2(\R^N)$ be
nonnegative and satisfy
$$\chi=0\quad\text{in }B_{R+1},\qquad\chi=1\quad\text{outside }B_{R+2}.$$
For $\e>0$, define the function $u:=\vp+\e\chi\phi$.
We see that $(L+\lambda)u\leq0$ a.e.~in $B_{R+1}\cup B_{R+2}^c$.
On the other hand, for a.e.~$x\in B_{R+2}
\backslash B_{R+1}$,
\[\begin{split}   
(L+\lambda)u &\leq(L+\lambda)\vp+
\e[\chi(L+\lambda)\phi+2a_{ij}\partial_i\chi\partial_j\phi
+(a_{ij}\partial_{ij}\chi+b_i\partial_i\chi)\phi]\\
&\leq(\lambda-\l(-L,\R^N))\vp+\e C,
\end{split}\]
where $C$ is a constant depending on $N$, the $L^\infty$ norms of $a_{ij}$,
$b_i$, the $W^{2,\infty}$ norm of $\chi$ and the $W^{1,\infty}$ norm of
$\phi$ on $B_{R+2}\backslash B_{R+1}$. Therefore, for $\e$ small enough the
function $u$ satisfies $(L+\lambda)u<0$ a.e.~in $B_{R+2}
\backslash B_{R+1}$. Since $u$ is an admissible function for $\l''$, we
eventually obtain $\l''(-L,\R^N)\geq\lambda$.

Let us now turn to the case of a general smooth domain $\O$.
Assume that $\O\cap B_R\neq\emptyset$, otherwise we immediately get
$\l''(-L,\O)=\lambda''(R)>\lambda$.
The open set $\O\backslash \ol B_R$, being smooth in a neighborhood
of $\partial B_{R+1}$, has a finite number of connected components
$\mc{O}_1,\dots,\mc{O}_m$ intersecting $\partial B_{R+1}$. This is seen by a
compactness argument that we leave to the reader.
For $j\in\{1,\dots m\}$, we have
$\l''(-L,\mc{O}_j)\geq\l''(R)>\lambda$.
Since $\partial\mc{O}_j\backslash\partial B_R$ is smooth, by Proposition \ref{pro:C1} 
there exists a function
$\phi^j\in W^{2,p}_\loc(\ol{\mc{O}}_j\backslash\partial B_R)$, $\fa p<\infty$,
satisfying
\Fi{Oj}
\inf_{\mc{O}_j}\phi^j>0,\qquad (L+\lambda)\phi^j\leq0\quad
\text{a.e.~in }\mc{O}_j.
\Ff
Define the function $\phi$ by setting $\phi(x):=\phi^j(x)$ if $x\in \mc{O}_j$.
Note that $\O\backslash B_{R+1}\subset\bigcup_{j=1}^m\mc{O}_j$ because
$\O$ is connected. Thus, $\phi\in W^{2,p}_\loc(\ol\O\backslash B_{R+1})$
satisfies \eq{Oj} with $\mc{O}_j$ replaced by
$\O\backslash \ol B_{R+1}$. We fix $\t\lambda\in(\lambda,\l(-L,\O))$ and
consider a function $\t\vp$ satisfying 
$$-L\t\vp=\t\lambda\t\vp\quad\text{a.e.~in }\O,\qquad
\t\vp>0\quad\text{in }\O\cup(\ol B_{R+2}\cap\partial\O).$$
The function $\t\vp$ replaces the
eigenfunction $\vp$ used in the case $\O=\R^N$. Its existence is given by the
next lemma.

\begin{lemma}\label{lem:infT>0}
Assume that $\O$ has a $C^{1,1}$ boundary portion $T\subset\partial\O$ which is
compact.
Then, for any $\t\lambda<\l(-L,\O)$, there exists $\t\vp\in
W^{2,p}_\loc(\O\cup T)$, $\fa p<\infty$, such that
$$-L\t\vp=\t\lambda\t\vp\quad\text{a.e.~in }\O,\qquad
\t\vp>0\quad\text{in }\O\cup T.$$
\end{lemma}

Postponing the proof of Lemma \ref{lem:infT>0} for a moment, let us complete
the proof of \thm{equivalence}.
Consider the same function $\chi\in C^2(\R^N)$ as before.
For $\e>0$, the function $u:=\t\vp+\e\chi\phi$ satisfies
$(L+\lambda)u\leq0$ a.e.~in
$\O\cap(B_{R+1}\cup B_{R+2}^c)$.
Moreover, since $\phi\in C^1(\ol\O\cap(\ol B_{R+2}\backslash B_{R+1}))$, the
same computation as before shows that there exists $C$ independent of $\e$ such
that
$$(L+\lambda)u\leq(\lambda-\l(-L,\t\O))\t\vp+\e C\quad\text{a.e.~in }
\O\cap(B_{R+2}\backslash B_{R+1}).$$
The latter quantity is negative for $\e$ small enough because $\t\vp>0$ on
$\ol{\O\cap B_{R+2}}$. Therefore, taking $\phi=u$ in \eq{l1''}
we get $\l''(-L,\O)\geq\lambda$. 

The last statement of \thm{equivalence} follows immediately from
\thm{relations}.
\end{proof}

\begin{proof}[Proof of Lemma \ref{lem:infT>0}]
Let $U$ be a bounded neighborhood of $T$ where $\partial\O$ is smooth.
Consider an extension of the operator $L$ - still denoted by $L$ - to $\O\cup
U$, satisfying the same hypotheses as $L$.
As we have seen in the proof of \thm{l1dec}, it is possible to construct a
decreasing sequence of domains $(\mc{O}_n)_{n\in\N}$ satisfying
$$\ol{\mc{O}_1\backslash\O}\subset U,\qquad
\fa n\in\N,\quad\O\cup T\subset\mc{O}_n,\qquad
\bigcap_{n\in\N}\ol{\mc{O}}_n=\ol\O.$$
Hence, by \thm{l1dec}, $\l(-L,\mc{O}_n)>\t\lambda$ for $n$ large enough.
It then follows that there exists a positive
function $\t\vp\in W^{2,p}_\loc(\mc{O}_n)$, $\fa p<\infty$, satisfying
$-L\t\vp=\t\lambda\t\vp$ a.e.~in $\mc{O}_n$. In particular, $\t\vp>0$ on
$\O\cup T\subset \mc{O}_n$.
\end{proof}

\begin{proof}[Conclusion of the proof of \thm{SC}]
Cases 2-4 are derived from \thm{equivalence}, which is a powerful tool to
understand when equality occurs. Thus, the aim is to prove \eq{l1<l1''r}.

Case 2) By the definition of $\l''$, it follows that
\[\begin{split}
\lim_{r\to\infty}\l''(-L,\O\backslash\ol{B_r}) &\geq
\lim_{r\to\infty}\left(\l''(-\t L,\O\backslash\ol{B_r})-
\sup_{\O\backslash\ol{B_r}}\gamma\right)\\
& =\lim_{r\to\infty}\l''(-\t L,\O\backslash\ol{B_r}).
\end{split}\]
The last limit above is greater than or equal to $\l''(-\t L,\O)=\l(-\t
L,\O)$. Since $\gamma\geq0$, we see that $\l(-\t
L,\O)\geq\l(-L,\O)$. Hence, \eq{l1<l1''r} holds.

Case 3) Proposition \ref{pro:l1''} part (ii) yields
$$\lim_{r\to\infty}\l''(-L,\O\backslash\ol{B_r})\geq
\lim_{r\to\infty}(-\sup_{\O\backslash\ol{B_r}}c)=
-\limsup_{\su{x\in\O}{|x|\to\infty}}c(x)\geq\l(-L,\O).$$

Case 4)
Owing to the case 3, it is sufficient to show that $\l(-L,\O)\leq-\sigma$, for
all $\sigma<\limsup_{\su{x\in\O}{|x|\to\infty}}c(x)$. Take such a 
$\sigma$.
Consider first the case where $L$ is self-adjoint. Let $B$ be a ball
contained in $\O$. Proposition \ref{l1} part (iii) yields $\l(-L,\O)
\leq\l(-L,B)$. From the Rayleigh-Ritz formula, it then follows that
$$\l(-L,\O)\leq\l(-L,B)\leq\l(-\Delta,B)\sup_B\ol\alpha-\inf_B c.$$
Since, by hypothesis, we can find balls $B\subset\O$ with arbitrarily large
radius such that $\inf_B c>\sigma$, we deduce that
$\l(-L,\O)\leq-\sigma$.
Consider now the case where $L$ is not self-adjoint.
By hypothesis, there exists $\delta>0$ such that, for all $r>0$, there is a
ball $B$ of radius $r$ satisfying
$$\fa x\in B,\quad 4\ul\alpha(x)(c(x)-\sigma)\geq\delta.$$
Let $B'$ be another ball of radius $r/4$ contained in the set $B\backslash
B_{r/2}$.  For large enough $r$, we find that
$$\fa x\in B',\quad 4\ul\alpha(x)(c(x)-\sigma)-|b(x)|^2\geq\delta/2.$$
As shown in Lemma 3.1 of \cite{BHRossi}, if the radius of $B'$ is large enough
(depending on $\delta$), the above condition ensures the existence of a $C^2$
function $\phi$ satisfying
$$(L-\sigma)\phi>0\text{ in }B',\qquad
\phi>0\text{ in }B',\qquad\phi=0\text{ in }\partial B'.$$
As a consequence,
$$-\sigma\geq\l'(-L,B')=\lambda_{B'}=\l(-L,B')\geq\l(-L,\O).$$
\end{proof}

\begin{remark}
If the function $\gamma$ in the case 2 of \thm{SC} is
compactly supported in $\O$, then $\l(-L,\O)=\l''(-L,\O)$ holds
true even for $\O$ non-smooth.
\end{remark}


\section{Existence and uniqueness of the principal eigenfunctions}
\label{sec:simplicity}

We now investigate the simplicity of $\l$. Another natural question
is to know whether the generalized \pe s $\l'$, $\l''$ have corresponding
eigenvalues that satisfy the additional requirements of their definitions. This
section is devoted to these questions.

We say that an eigenfunction $\vp$ is admissible for $\l'$ (resp.~$\l''$) if it satisfies
$$\sup_\O\vp<\infty\qquad\text{(resp. }\fa\e>0,\quad
\inf_{\O_\e}\vp>0),$$
where $\O_\e$ is defined in Section \ref{sec:other}.
Throughout this section, we assume that $\l,\l',\l''\in\R$ (which is for
instance the case if $\sup c<+\infty$).

From \thm{spettro} we know that if $\O$ is smooth then there always exist
eigenfunctions with eigenvalues $\l$, $\l'$, $\l''$ respectively.
But, as we show below, $\l'$ and $\l''$ may not have admissible eigenfunctions.
Moreover, $\l$, $\l'$, $\l''$ are generally not simple.

\begin{proposition}\label{pro:nopef}
There exist operators $L$ for which there are several linearly independent 
eigenfunctions associated with the eigenvalues $\l(-L,\O)$, $\l'(-L,\O)$, $\l''(-L,\O)$.
There are also operators such that
$\l'$ or $\l''$ have several linearly independent admissible eigenfunctions and others
for which they do not have any.
\end{proposition}

\begin{proof}
Let
$Lu=u''+c(x)u$ in $\R$, with $c<0$ in $(-1,1)$ and $c=0$ outside. We show that
$\l'$ has no admissible eigenfunctions and that
$\l(-L,\R)=\l'(-L,\R)=\l''(-L,\R)=0$ is not simple, even in the class of
admissible eigenfunctions for $\l''$. 
Let $\vp_-$ and $\vp_+$ be the solutions to $Lu=0$ in $\R$
satisfying $\vp_{\pm}(\pm1)=1,\ \vp_{\pm}'(\pm1)=0$. By ODE
arguments we find that $\vp_-$ and $\vp_+$ are positive and satisfy
$$\vp_-=1\ \text{ in }(-\infty,-1],\qquad\vp_+=1\ \text{ in }[1,+\infty),\qquad
\lim_{x\to\mp\infty}\vp_\pm(x)=+\infty.$$
Consequently, they are linearly independent and thus they generate the space
of solutions to $Lu=0$ in $\R$. Taking $\phi=\vp_-$
in \eq{l1''} and using \thm{relations} we derive
$\l''(-L,\R)=\l'(-L,\R)=\l(-L,\R)=0$.

To exhibit an example of non-existence of admissible eigenfunctions for $\l''$,
we will make use of Proposition
\ref{pro:c<0}, proved at the end of this section.
Consider the operator $Lu=u''+c(x)u$ in $\R$, with $c=0$ in $(-\pi,\pi)$,
$c=-1$ outside $(-\pi,\pi)$. By Proposition \ref{l1} part (iii) we see that
$\l(-L,\R)<\l(-L,(-\pi,\pi))=1/4$. Thus, \thm{SC} yields
$\l''(-L,\R)=\l(-L,\R)$. But Proposition
\ref{pro:c<0} implies that the eigenfunction associated with $\l(-L,\R)$ is
unique (up to a scalar multiple) and vanishes at infinity.

Lastly, an example of non-uniqueness of admissible eigenfunctions for $\l'$ is
given by the operator
$$Lu:=u''+\frac{2x}{1+x^2}u'\ \text{ in }\R.$$
In fact, the functions $u_1\equiv1$ and $u_2(x)=\arctan(x)+\pi$ satisfy
$Lu=0$ in $\R$. Taking $\phi=u_1$ in the definition of $\l'$ and $\l''$
we get $\l'(-L,\R)\leq0\leq\l''(-L,\R)$. Hence,
$\l''(-L,\R)=\l'(-L,\R)=0$ by statement (iii) of \thm{relations} and, as a
consequence, $\l'$ is not simple.
\end{proof}

Let us mention two other examples of non non-existence of 
admissible eigenfunctions for $\l'$ and $\l''$ respectively, this time in 
higher dimension, that can be exhibited using the theory of critical operators
(see, e.g.,\cite{PiBook}). The first one is $L=\Delta + c (x)$ in $\R^2$, where
$$c(x)= \begin{cases} 1 &\text{if }x\in\bigcup_{n\in\N}B_{r_n}(x_n)\\
0 & \text{otherwise},
         \end{cases}$$
with $\seq{x}$, $\seq{r}$ such that the $B_{r_n}(x_n)$ are disjoint and
$|x_n|,r_n\to\infty$. Clearly, $\l(-L,\R^2)=\l'(-L,\R^2)=\l(-L,\R^2)=0$, but
one can show that the equation $L=0$ does not admit
positive bounded solutions in $\R^2$ (see \cite{Pinch07}).
An example where no admissible
eigenfunctions exist for $\l''$ is $L= \Delta + c (x)$ in $\R^N$, $N\geq3$, 
with $c\leq0$ chosen in such a way that $L$ is {\em critical}. 
Then $\l(-L,\R^N)= \l''(-L,\R^N)= 0$, and the (unique up to a scalar multiple) 
positive solution of $L=0$ behaves at infinity like $|x|^{2-N}$.

In order to derive
a sufficient condition for the simplicity of
$\l$, we introduce the notion of ``minimal
growth at infinity''. This notion slightly differs from the one
of S.~Agmon \cite{A1} (see
Remark \ref{rem:mgi} below). In the case of smooth domains, the
sufficient condition we obtain - \thm{mgi} - is more general than
Theorem 5.5 in \cite{A1} (whose proof can be found in \cite{Pinch90_2}, see
Lemma 4.6 and Remark 4.8 therein).
Let us mention that another sufficient condition for the
simplicity of $\l$ can be expressed in terms of the criticality
property of the operator (see, e.g., \S4 in \cite{PiBook} and the references
therein).

\begin{definition}\label{def:mgi}
Let $\O$ be unbounded.
A positive function $u\in W^{2,N}_{loc}(\O)$ satisfying
\Fi{L=0}
Lu=0\quad\text{a.e.~in }\O,
\Ff
is said to be a solution of \eq{L=0} of {\em minimal growth at
infinity} if for any $\rho>0$ and any positive function 
$v\in W^{2,N}_{loc}(\O\backslash B_\rho)$ 
satisfying $Lv\leq0$ a.e.~in $\O\backslash B_\rho$,
there exist $R\geq\rho$ and $k>0$ such that
$ku\leq v$ in $\O\backslash B_R$.
\end{definition}

\begin{remark}\label{rem:mgi}
Our Definition \ref{def:mgi} of minimal growth at infinity differs
from the original one of Agmon \cite{A1}. There, $B_\rho$ and $B_R$ are
replaced by two compact sets $K\subset K'\subset\O$. Thus, Agmon's definition regards
minimal growth both at infinity and at the boundary, whereas ours
only deals with behavior at infinity. Indeed,
Agmon calls it ``minimal growth at infinity in $\O$''.
Using the refined \MP\ in
bounded domains, one readily sees that solutions of minimal growth at infinity
vanishing on $\partial\O$ fulfill Agmon's definition. Hence, owing to Theorem
5.5
in \cite{A1}, they are unique up to a scalar multiple. This fact is expressed in
the next statement, whose simple proof is included here for the sake of 
completeness.
Another difference with Agmon's approach is that he also considers 
positive solutions in 
proper subsets $\O\backslash E$, without imposing condition on $\partial E$.
Such solutions can always be constructed, no matter 
what the sign of $\l(-L,\O)$ is, satisfying in addition the minimal growth 
condition. When $E$ reduces to a single point, this type of solutions is 
used to investigate the removability of singularities.
\end{remark}

\begin{proposition}[\cite{A1}]\label{pro:mgi}
Let $\O$ be unbounded and $u\in W^{2,N}_{loc}(\O)\cap
C^0(\ol\O)$ be a solution of \eq{L=0} of minimal growth at infinity
vanishing on $\partial\O$. Then, for any positive function $v\in
W^{2,N}_{loc}(\O)$ satisfying $Lv\leq0$
a.e.~in $\O$, there exists $\kappa>0$ such that
$v\equiv \kappa u$ in $\O$. In particular, $\l(-L,\O)=0$.
\end{proposition}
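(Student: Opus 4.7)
I would set $\kappa := \sup\{t \geq 0 : v \geq tu \text{ in } \O\}$ and prove that $0 < \kappa < \infty$, that $v \equiv \kappa u$ (by showing $w := v - \kappa u$ vanishes identically), and finally deduce $\l(-L, \O) = 0$ as a short consequence. Two preliminary observations guide the argument. First, $u > 0$ with $Lu = 0$ makes $\lambda = 0$ admissible in \eq{l1}, so $\l(-L, \O) \geq 0$. Second, for any bounded subdomain $D \subset \O$ with $|\O \setminus D| > 0$, Proposition \ref{l1}(iii) then yields the strict inequality $\l(-L, D) > 0$, so the refined MP of \cite{BNV} (Property A.2 in Appendix \ref{sec:BNV}) is available in $D$.

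To show $\kappa > 0$, I would first apply the minimal growth of $u$ to $v$ viewed as a positive supersolution in $\O \setminus B_1$, obtaining $R \geq 1$ and $k > 0$ with $v \geq ku$ in $\O \setminus B_R$, and then extend this inequality to all of $\O$ via the MP in $D := \O \cap B_R$ on the function $w_k := v - ku$. Indeed $L w_k = Lv \leq 0$; the subsolution $-w_k$ is bounded above on $D$ by $k\max_{\bar\O \cap \bar B_R} u < \infty$; and $\limsup(-w_k) \leq 0$ at every point of $\partial D$ — on $\O \cap \partial B_R$ by continuity from $v \geq ku$, and on $\partial\O \cap \bar B_R$ because $u \to 0$ while $\liminf v \geq 0$. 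Hence $w_k \geq 0$ in $D$, giving $v \geq ku$ throughout $\O$ and $\kappa \geq k > 0$. Finiteness $\kappa \leq v(x_0)/u(x_0) < \infty$ is immediate for any fixed $x_0 \in \O$.

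Passing to the sup, $v \geq \kappa u$ in $\O$, so $w := v - \kappa u \geq 0$ satisfies $Lw = Lv \leq 0$. The SMP then yields the dichotomy: either $w \equiv 0$ — in which case $v \equiv \kappa u$ and we are done — or $w > 0$ everywhere in $\O$. Assume the latter; then $w$ is itself a positive supersolution in $\O$, so applying the minimal growth of $u$ now to $w$ gives $R'$ and $k' > 0$ with $k' u \leq w$ in $\O \setminus B_{R'}$, that is, $v \geq (\kappa + k') u$ there. Repeating verbatim the MP step above in $\O \cap B_{R'}$ extends this inequality to all of $\O$, contradicting the maximality of $\kappa$. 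Finally, were $\l(-L, \O) > 0$, one could pick $\lambda > 0$ and $\phi > 0$ with $L\phi \leq -\lambda \phi < 0$; the just-proved uniqueness would force $\phi = \kappa u$, whence $L\phi = \kappa Lu = 0$, a contradiction.

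The main subtlety is that the boundary behavior of $v$ on $\partial\O$ is unprescribed (and $\partial\O$ need not be smooth), which is what makes the initial comparison $v \geq \kappa u$ delicate. It is resolved by observing that the boundary condition in Definition \ref{def:MP} only requires the subsolution $-w_k$ to have $\limsup \leq 0$ at $\partial\O$, a condition that holds automatically since $v > 0$ in $\O$ and $u \to 0$ on $\partial\O$ by the assumed continuity $u \in C^0(\bar\O)$ with $u = 0$ on $\partial\O$.
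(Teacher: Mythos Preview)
Your proof is correct and follows essentially the same approach as the paper's: define $\kappa$ as the optimal comparison constant (the paper writes it as $\inf_\O v/u$, which is the same quantity), assume $v-\kappa u>0$, apply the minimal-growth hypothesis to $v-\kappa u$ to get $hu\le v-\kappa u$ outside a ball, then use the refined MP of \cite{BNV} in the connected components of $\O\cap B_R$ to extend this inequality inward and contradict the definition of $\kappa$. Your version is slightly more explicit (separating the $\kappa>0$ step and spelling out the $\l=0$ conclusion), but the argument is the same.
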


\begin{proof}
Taking $\phi=u$ in \eq{l1} yields $\l(-L,\O)\geq0$. Consider a function $v$ as
in the statement. The quantity
$$\kappa:=\inf_\O\frac v u$$
is a nonnegative real number.
Suppose by way of contradiction that $v-\kappa u>0$ in $\O$.
Applying Definition \ref{def:mgi} with $v-\kappa u$ in place of $v$, we can
find $R,h>0$ such that $hu\leq v-\kappa u$ in $\O\backslash B_R$.
By Property A.2, we know that the refined MP holds in
any connected component $\mc{O}$ of $\O\cap B_R$, because $\l(-L,\mc{O})>0$ by
Proposition \ref{l1} part (iii). As a consequence, $hu\leq v-\kappa u$ in the
whole $\O$. This contradicts the definition of $\kappa$. Therefore, $v-\kappa u$
vanishes somewhere in $\O$, and then everywhere by the strong \MP.
\end{proof}

From Proposition \ref{pro:mgi} it follows in particular that $\l$ is simple, in
the class of positive functions, as soon as it admits an eigenfunction 
having minimal growth at infinity (we recall that eigenfunctions are assumed to 
vanish on $\partial\O$). Here we derive a sufficient condition for 
this to hold.

\begin{theorem}\label{thm:mgi}
If $\O$ is unbounded and smooth and $\l(-L,\O)$ satisfies
$$\l(-L,\O)<\lim_{r\to\infty}\l(-L,\O\backslash\ol B_r),$$
then the associated eigenfunction is a solution of \eq{pef} of minimal growth at
infinity and, therefore, $\l(-L,\O)$ is simple in
the class of positive functions.
\end{theorem}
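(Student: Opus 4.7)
The plan is to establish that $\vp_1$ (the eigenfunction associated with $\l_1 := \l(-L,\O)$ provided by \thm{spettro}, which vanishes on $\partial\O$) is a solution of \eq{pef} of minimal growth at infinity, and then invoke Proposition \ref{pro:mgi} to obtain simplicity. By Proposition \ref{l1}(iii), the map $r\mapsto\l(-L,\O\backslash\ol B_r)$ is non-decreasing, so the hypothesis yields $R_0$ with $\mu:=\l(-L,\O\backslash\ol B_{R_0})>\l_1$. Hence the operator $L+\l_1$ has strictly positive principal eigenvalue $\mu-\l_1$ in $\O\backslash\ol B_{R_0}$, providing a positive supersolution $\phi\in W^{2,N}_\loc(\O\backslash\ol B_{R_0})$ with $(L+\l_1)\phi\leq-\delta\phi$ for some $\delta>0$, which will serve as a barrier at infinity.

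Given $v>0$ with $Lv\leq 0$ in $\O\backslash B_\rho$, I fix $R>\max(R_0,\rho)$ and use compactness of $\ol\O\cap\partial B_R$ together with the continuity and strict positivity of $v$ there and the boundedness of $\vp_1$ to select $k>0$ with $k\vp_1\leq v$ on this set. Setting $D:=\O\backslash\ol B_R$ and $w:=v-k\vp_1$, I get $w\geq 0$ on $\partial D$ (from $\vp_1=0$ on $\partial\O$ and the choice of $k$ on $\partial B_R\cap\O$); the task reduces to propagating this nonnegativity into $D$.

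Suppose for contradiction that $w<0$ somewhere in $D$. Then $\beta:=\inf_D(w/\vp_1)<0$, and the strict positivity of $v$, together with a boundary Harnack comparison of $v$ and $\vp_1$ near $\partial\O$, bounds this infimum from below, giving $\beta\in(-k,0)$. The function $\tilde w:=w-\beta\vp_1=v-(k+\beta)\vp_1\geq 0$ in $D$ satisfies $\inf_D(\tilde w/\vp_1)=0$. The \SMP\ together with the Hopf-type Lemma \ref{lem:Hopf} (and the strict positivity of $\tilde w$ on $\partial B_R\cap\O$, which holds since $\beta<0$) rule out that this infimum be attained in the interior of $D$, on $\partial B_R\cap\O$, or along $\partial\O$, leaving only attainment along a sequence $x_n\in D$ with $|x_n|\to\infty$. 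Excluding this scenario is the main obstacle: for every $\eta>0$, I compare $\tilde w$ with the barrier $-\eta\phi$, invoking in each bounded exhaustion $D_n:=D\cap B_n$ the refined maximum principle of Property A.2, which applies because $\l(-(L+\l_1),D_n)\geq\mu-\l_1>0$. After a careful sign analysis based on the identity $L\tilde w=Lv+(k+\beta)\l_1\vp_1$, this comparison yields $\tilde w+\eta\phi\geq 0$ in $D_n$ for every $n$, hence in $D$; letting $\eta\downarrow 0$ gives $\tilde w\geq 0$, contradicting $\inf_D(\tilde w/\vp_1)=0$.

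Once the minimal growth of $\vp_1$ is established, Proposition \ref{pro:mgi} applies directly: every positive $v\in W^{2,N}_\loc(\O)$ satisfying $Lv+\l_1 v\leq 0$ is a scalar multiple of $\vp_1$. In particular, every positive eigenfunction for $\l_1$ coincides with $\vp_1$ up to a multiplicative constant, which is the simplicity claim.
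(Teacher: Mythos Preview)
Your argument has a genuine gap at the crucial step, namely the exclusion of the scenario where $\inf_D(\tilde w/\vp_1)=0$ is approached along a sequence $|x_n|\to\infty$. Your barrier comparison is circular: you already know $\tilde w\geq0$ by construction (that was the whole point of subtracting $\beta\vp_1$), so the conclusion ``$\tilde w+\eta\phi\geq0$ in $D_n$, hence $\tilde w\geq0$'' is vacuous, and $\tilde w\geq0$ does \emph{not} contradict $\inf_D(\tilde w/\vp_1)=0$. What you would actually need is a strictly positive lower bound $\tilde w\geq c\,\vp_1$ near infinity for some $c>0$, and nothing in your setup provides this. The positive supersolution $\phi$ you introduce has no prescribed relation to the growth of $\vp_1$ at infinity, so it cannot serve as the required barrier. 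More fundamentally, working directly with the limiting eigenfunction $\vp_1$ gives you no a priori information about its behavior at infinity, and that is precisely what minimal growth is about.

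The paper's proof avoids this obstruction by an approximation argument: it does not work with $\vp_1$ directly but with the generalized principal eigenfunctions $\vp^n$ of $-L$ in the bounded domains $\O_n$ used in the proof of \thm{spettro}. These satisfy $\vp^n=0$ on $\partial\O_n$ in the generalized sense, in particular on the ``outer'' portion of $\partial\O_n$. One then fixes auxiliary eigenfunctions $\phi^j$ in the connected components $\mc{O}_j$ of $\O\setminus\ol B_R$ and, for each $\e>0$, applies the refined MP in the \emph{bounded} sets $\O_n\cap\mc{O}_j\setminus\ol B_{R'}$ to obtain $\vp^n-\e\phi^j\leq hv$ there (the boundary condition on the outer part is handled precisely because $\vp^n$ vanishes there). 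Passing to the limit $n\to\infty$ and then $\e\to0$ yields $\vp^*\leq hv$ in $\O\setminus B_{R'}$. The construction of $\vp^*$ as a limit of functions vanishing on ever larger spheres is what encodes the minimal growth; this is the idea missing from your proposal. A smaller point: the choice of $k$ with $k\vp_1\leq v$ on $\O\cap\partial B_R$ also requires the Hopf-type Lemma~\ref{lem:Hopf} (since $v$ need not extend continuously to $\partial\O$), not merely compactness; and the supersolution condition on $v$ should read $(L+\l_1)v\leq0$ rather than $Lv\leq0$, unless you first normalize $\l_1=0$.
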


\begin{proof}
It is not restrictive to assume that $\l(-L,\O)=0$. Consider the
same family of bounded domains $\seq{\O}$ as in the proof of
\thm{spettro}, i.e.,
$$\fa n\in\N,\quad\O\cap B_n\subset\O_n\subset\O_{n+1}\subset\O.$$
As we have seen there, the generalized principal eigenfunctions
$\vp^n$ of $-L$ in $\O_n$ - provided by Property A.1 - normalized by 
$\vp^n(x_0)=1$, for a given $x_0\in\O$,
converge (up to subsequences) in $C^1_{loc}(\ol\O)$ to an
eigenfunction $\vp^*$ with eigenvalue $\l(-L,\O)$. We claim that $\vp^*$ is a
solution of
\eq{pef} of minimal growth at infinity.
By hypothesis, there exists $R>0$ such that
$\l(-L,\O\backslash\ol B_R)>0$.
Let $\mc{O}_1,\dots,\mc{O}_m$ be
the connected components of $\O\backslash\ol B_R$ intersecting
$\partial B_{R+1}$ (which are finite due to the smoothness of
$\O$). It follows from Proposition \ref{l1} that there is
$n_0\in\N$ such that
$$\fa j\in\{1,\dots m\},\ n\geq n_0,\quad\l(-L,\mc{O}_j)\geq
\l(-L,\O\backslash\ol B_R)>\l(-L,\O_n)>0.$$ Let $\phi^j>0$ satisfy
$-L\phi^j=\l(-L,\mc{O}_j)\phi^j$ a.e.~in $\mc{O}_j$ (see statement (v) of
Proposition \ref{l1}). 
Since $\vp^n\to\vp^*$ in $C^1_{loc}(\ol\O)$, by Lemma \ref{lem:Hopf}
it is possible to normalize $\phi^j$ in such a way
that
$$\fa n\in\N,\quad \phi^j\geq\vp^n\quad
\text{on }\O_n\cap\mc{O}_j\cap\partial B_{R+1}.$$
Hence, for $n\geq n_0$, applying the refined MP in every
connected component of $\O_n\cap\mc{O}_j\backslash\ol B_{R+1}$ -
which holds due to Property A.2 - we get
$\vp^n\leq\phi^j$ in $\O_n\cap\mc{O}_j\backslash B_{R+1}$.
It follows that, for given $\e>0$, the function $\vp^n-\e\phi^j$
satisfies
$$L(\vp^n-\e\phi^j)\geq[-\l(-L,\O_n)+\e\l(-L,\mc{O}_j)]\vp^n
\quad \text{a.e.~in }\O_n\cap\mc{O}_j\backslash B_{R+1}.$$
Therefore, since $(\l(-L,\O_n))_{n\in\N}$ converges to $0$,
there exists $n_1\in\N$ such that
$L(\vp^n-\e\phi^j)>0$ a.e.~in $\O_n\cap\mc{O}_j\backslash B_{R+1}$
for $n\geq n_1$. Consider now a function
$v$ as in Definition \ref{def:mgi}. Let $R'>\max(\rho,R+1)$.
By Lemma \ref{lem:Hopf}, there exists $h>0$ such that
$$\fa n\in\N,\quad h v\geq\vp^n\quad
\text{on }\O_n\cap\partial B_{R'}.$$
For $n\geq n_1$, applying once again the refined MP we then obtain
$\vp^n-\e\phi^j\leq hv$ in $\O_n\cap\mc{O}_j\backslash B_{R'}$.
Letting $n\to\infty$ we finally derive
$\vp^*-\e\phi^j\leq hv$ in $\mc{O}_j\backslash B_{R'}$.
Since the latter holds for all $j\in\{1,\dots m\}$ and $\e>0$,
we eventually infer that $\vp^*\leq hv$ in $\O\backslash B_{R'}$.
This concludes the proof.
\end{proof}

\begin{corollary}
If $\O$ is unbounded and smooth, the $a_{ij}$ are bounded and the
$b_i$, $c$ satisfy
\Fi{Binfty}
\lim_{\su{x\in\O}{|x|\to\infty}}\frac{b(x)\.x}{|x|}=\pm\infty,\qquad
\sup_\O c<\infty,
\Ff
then the eigenfunction associated with $\l(-L,\O)$ is a solution of \eq{pef}
of minimal growth at infinity, and it satisfies
$$\fa\sigma>0,\quad
\lim_{\su{x\in\O}{|x|\to\infty}}\vp(x)e^{\pm\sigma|x|}=0,$$
where the $\pm$ is in agreement with the $\pm$ in \eq{Binfty}.
\end{corollary}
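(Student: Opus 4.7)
The plan is to verify the hypothesis of \thm{mgi} in the form $\lim_{r\to\infty}\l(-L,\O\setminus\ol B_r)=+\infty$, and then to apply minimal growth at infinity against explicit exponential supersolutions to obtain the decay rate. Both steps rely on the single computation already carried out in the proof of Proposition \ref{pro:l1inR}.

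Let $\epsilon\in\{+1,-1\}$ denote the sign appearing in \eq{Binfty}, so that $\epsilon\,b(x)\cdot x/|x|\to+\infty$ as $|x|\to\infty$, $x\in\O$. For $\sigma>0$, consider $\phi_\sigma(x):=e^{-\epsilon\sigma|x|}$ (smoothly extended inside $B_1$; the extension is immaterial because only the inequality outside a large ball is used). The formula in the proof of Proposition \ref{pro:l1inR} gives, a.e.~in $\O\setminus\ol B_1$,
$$L\phi_\sigma=\left[\sigma^2 A(x)-\epsilon\sigma\left(\frac{\tr(a_{ij}(x))}{|x|}-\frac{A(x)}{|x|}+\frac{b(x)\cdot x}{|x|}\right)+c(x)\right]\phi_\sigma,$$
where $A(x)=a_{ij}(x)x_ix_j/|x|^2$. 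Boundedness of $a_{ij}$ and $\sup_\O c<\infty$, combined with the hypothesis on $b$, make the bracket tend to $-\infty$. Hence, for every $\lambda\in\R$, $(L+\lambda)\phi_\sigma\leq 0$ in $\O\setminus\ol B_R$ for $R=R(\lambda,\sigma)$ large enough, which yields $\l(-L,\O\setminus\ol B_R)\geq\lambda$. Since $\lambda$ is arbitrary and $\l(-L,\O)$ is finite by Proposition \ref{l1}(ii), the hypothesis of \thm{mgi} is fulfilled; consequently $\vp$ is a solution of \eq{pef} of minimal growth at infinity and $\l(-L,\O)$ is simple.

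For the exponential estimate, fix $\sigma>0$ and choose $\sigma''>0$ with $\epsilon\sigma''>\epsilon\sigma$. Running the same computation with $\lambda=\l(-L,\O)$ produces $R>0$ such that $(L+\l(-L,\O))\phi_{\sigma''}\leq 0$ a.e.~in $\O\setminus\ol B_R$. Since $\vp$ is a positive solution of $(L+\l(-L,\O))u=0$ of minimal growth at infinity (Definition \ref{def:mgi} applied after the customary change of unknown to the operator $L+\l(-L,\O)$), the supersolution $\phi_{\sigma''}$ provides $R'\geq R$ and $k>0$ with $k\vp\leq\phi_{\sigma''}=e^{-\epsilon\sigma''|x|}$ in $\O\setminus\ol B_{R'}$. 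Therefore
$$\vp(x)\,e^{\epsilon\sigma|x|}\leq k^{-1}e^{\epsilon(\sigma-\sigma'')|x|}\to 0\qquad(|x|\to\infty),$$
because $\epsilon(\sigma-\sigma'')<0$ by the choice of $\sigma''$. As $\sigma>0$ was arbitrary and the $\pm$ in the statement is precisely $\epsilon$, the conclusion follows.

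The main obstacle is essentially bookkeeping the sign $\epsilon$; once the exterior-eigenvalue blow-up is in hand, \thm{mgi} does all the heavy lifting, and the same exponential test function pulls double duty as both the competitor for $\l(-L,\O\setminus\ol B_r)$ and the supersolution against which minimal growth is invoked.
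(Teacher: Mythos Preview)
Your proof is correct and follows the same approach as the paper: use the exponential test function $\phi(x)=e^{\mp\sigma|x|}$ to verify the hypothesis of \thm{mgi}, and then compare $\vp$ against this same function via minimal growth to extract the decay. The paper compresses the decay step into ``The result then follows from \thm{mgi}'', whereas you spell out the $\sigma''$ trick explicitly; this added detail is welcome, since minimal growth against $e^{-\epsilon\sigma|x|}$ a priori only gives boundedness of $\vp e^{\epsilon\sigma|x|}$, and one indeed needs the extra room $\epsilon(\sigma-\sigma'')<0$ to force the limit to zero.
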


\begin{proof}
For $\sigma>0$, define the function $\phi$ by
$\phi(x):=e^{\mp\sigma|x|}$, where the $\mp$
is in agreement with the $\pm$ in \eq{Binfty}.
The same computation as in the proof of Proposition
\ref{pro:l1inR} shows that $(L+\l(-L,\O)+1)\phi\leq0$
a.e.~in $\O\backslash B_r$, for $r$ large enough.
Therefore, $\l(-L,\O\backslash B_r)\geq\l(-L,\O)+1$.
The result then follows from \thm{mgi}.
\end{proof}

We now derive a result about the exponential decay of subsolutions
of the Dirichlet problem. This will be used to prove the last
statement of Proposition \ref{pro:c<0}.

\begin{proposition}\label{pro:decay}
Let $\O$ be unbounded and smooth, $L$ be an elliptic operator with
bounded
coefficients such that
$$\limsup_{\su{x\in\O}{|x|\to\infty}}c(x)<0,$$
and $A,\ B$ be the functions in \eq{AB}. Set
$$\Gamma_-:=\limsup_{\su{x\in\O}{|x|\to\infty}}\frac{B(x)-\sqrt{B^2(x)-4A(x)c(x)
}}
{2A(x)},$$
$$\Gamma_+:=\liminf_{\su{x\in\O}{|x|\to\infty}}\frac{B(x)+\sqrt{B^2(x)-4A(x)c(x)
}}
{2A(x)}.$$ Then, for any function $u\in W^{2,N}_\loc(\O)$ satisfying
$$Lu\geq0\text{ a.e.~in }\O,\qquad
\fa\xi\in\partial\O,\ \ \limsup_{x\to\xi}u(x)\leq0$$
and such that
$$\ex\gamma\in[0,-\Gamma_-),\quad
\limsup_{\su{x\in\O}{|x|\to\infty}}u(x)e^{-\gamma|x|}\leq0,$$
it holds that
$$\fa\eta\in(0,\Gamma_+),\quad
\limsup_{\su{x\in\O}{|x|\to\infty}}u(x)e^{\eta|x|}\leq0.$$
\end{proposition}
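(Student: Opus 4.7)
The proof will proceed by a Phragm\'en--Lindel\"of-type argument, using radial exponentials $e^{\mu|x|}$ as barriers. The core computation is that for $x\neq 0$ and $\mu\in\R$,
\[
L(e^{\mu|x|}) = \bigl[A(x)\mu^2 + B(x)\mu + c(x)\bigr]\,e^{\mu|x|},
\]
with $A,B$ as in the statement. Since $c<0$ at infinity, the quadratic $\mu\mapsto A(x)\mu^2+B(x)\mu+c(x)$ has two real roots $-\Gamma'_+(x)<0<-\Gamma'_-(x)$, writing $\Gamma'_\pm(x):=(B(x)\pm\sqrt{B^2-4Ac})/(2A)$. Hence $e^{\mu|x|}$ is an $L$-supersolution on $\{|x|>R\}$ for $R$ large whenever $\mu\in[-\Gamma'_+(x),-\Gamma'_-(x)]$ eventually, which, by the definitions of $\Gamma_-$ and $\Gamma_+$ as $\limsup$ and $\liminf$ respectively, is the case for every $\mu\in(-\Gamma_+,-\Gamma_-)$.

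My strategy is not to prove $\limsup u(x)e^{\eta|x|}\leq 0$ directly, but first to establish a quantitative bound $u(x)\leq K e^{-\eta'|x|}$ at a strictly faster rate $\eta'\in(\eta,\Gamma_+)$; the desired statement will then follow from $u(x)e^{\eta|x|}\leq Ke^{-(\eta'-\eta)|x|}\to 0$.

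To prove this bound, fix $\eta'\in(\eta,\Gamma_+)$ and $\gamma'\in(\gamma,-\Gamma_-)$ (possible by the strict inequalities in the hypotheses), and choose $R_0$ large so that $e^{-\eta'|x|}$ and $e^{\gamma'|x|}$ are both $L$-supersolutions on $\{|x|>R_0\}$ and $c(x)\leq c_0<0$ there. The hypothesis $\limsup u(x) e^{-\gamma|x|}\leq 0$ together with $\gamma'>\gamma$ gives that $u(x)e^{-\gamma'|x|}\to 0$ as $|x|\to\infty$. Set $M:=\sup_{\O\cap\overline{B_{R_0+1}}}u$, finite since $u\in W^{2,N}_\loc$ is locally bounded, and $K:=(M+1)e^{\eta' R_0}$. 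For each $\delta>0$ introduce the composite barrier
\[
\phi_\delta(x):=K e^{-\eta'|x|}+\delta e^{\gamma'|x|},
\]
which satisfies $L\phi_\delta\leq 0$ on $\{|x|>R_0\}$. I then apply the classical maximum principle (available because $c\leq 0$ in the region) to $u-\phi_\delta$ on the annulus $\O\cap(B_R\setminus\overline{B_{R_0}})$ for $R$ large: on the $\partial\O$-portion the inequality $u\leq\phi_\delta$ follows from $\limsup u\leq 0<\phi_\delta$; on $\O\cap\partial B_{R_0}$ from $u\leq M\leq Ke^{-\eta' R_0}$; and on $\O\cap\partial B_R$ from $u\leq(\delta/2)e^{\gamma' R}<\phi_\delta$, provided $R$ is taken large enough in terms of $\delta$. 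Hence $u\leq\phi_\delta$ on the annulus; letting $R\to\infty$ and then $\delta\to 0$ yields the sought bound $u(x)\leq K e^{-\eta'|x|}$ on $\O\cap\{|x|>R_0\}$.

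The main obstacle is the delicate choice of the two-scale barrier: the coefficient $K$ of $e^{-\eta'|x|}$ must be large enough to dominate $u$ on the inner sphere $\partial B_{R_0}$, while the $\delta e^{\gamma'|x|}$ term must dominate the (possibly large) values of $u$ on the outer sphere. It is precisely the gap $\gamma'>\gamma$ that makes this outer-sphere bound effective, and the separate gap $\eta'>\eta$ is what upgrades the resulting \emph{finite} decay constant $K$ into the desired $\limsup\leq 0$.
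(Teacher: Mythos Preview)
Your proof is correct and follows essentially the same Phragm\'en--Lindel\"of argument as the paper: both build the two-term radial barrier $K e^{-\ol\sigma|x|}+\delta\,e^{-\ul\sigma|x|}$ (with your $\eta'=\ol\sigma$ and $\gamma'=-\ul\sigma$), apply the maximum principle on annuli $\O\cap(B_R\setminus\ol B_{R_0})$ using $c<0$ there, and send the outer radius to infinity and the coefficient of the growing term to zero; the only cosmetic difference is that the paper couples $R$ and $\delta$ through a single integer parameter $n$. Two trivial fixes: take $K:=(\max(M,0)+1)e^{\eta'R_0}$ so that $K>0$ and $\phi_\delta$ remains a supersolution, and replace ``$u(x)e^{-\gamma'|x|}\to 0$'' by ``$\limsup u(x)e^{-\gamma'|x|}\leq 0$'' (which is all you need and all that follows, since $u$ may be unbounded below).
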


\begin{proof}
Let $\eta\in(0,\Gamma_+)$. Consider two numbers
$\ul\sigma\in(\Gamma_-,-\gamma)$ and
$\ol\sigma\in(\eta,\Gamma_+)$. By hypothesis, there exists $R>0$
such that, for a.e.~$x\in\O\backslash B_{R-1},\ u(x)\leq
e^{\gamma|x|},\ c(x)<0$ and
$$\ul\sigma>\frac{B(x)-\sqrt{B^2(x)-4A(x)c(x)}}
{2A(x)},\qquad
\ol\sigma<\frac{B(x)+\sqrt{B^2(x)-4A(x)c(x)}}{2A(x)}.$$ For any
$n\in\N$, define the function
$$\ol u_n(x):=e^{R(\gamma+\ol\sigma)-\ol\sigma|x|}+
e^{(R+n)(\gamma+\ul\sigma)-\ul\sigma|x|}.$$ Since for
$\sigma\in\R$ we have
$Le^{-\sigma|x|}=(A(x)\sigma^2-B(x)\sigma+c(x))e^{\sigma|x|}$, we
infer that $L\ol u_n\leq0$ a.e.~in $x\in\O\backslash\ol B_{R-1}$.
Moreover, $\ol u_n\geq u$ on $\O\cap(\partial B_{R+n}\cup\partial
B_R)$. Consequently, applying the \MP\ in any connected
component of $\O\cap(B_{R+n}\backslash\ol B_R)$ (where $c<0$) we get
$$\fa n\in\N,\ x\in\O\cap(B_{R+n}\backslash
B_R),\quad u(x)\leq e^{R(\gamma+\ol\sigma)-\ol\sigma|x|}+
e^{(R+n)(\gamma+\ul\sigma)-\ul\sigma|x|}.$$ Letting $n$ go to
infinity in the above inequality yields
$$\fa x\in \O\backslash B_R,\quad
u(x)\leq e^{R(\gamma+\ol\sigma)-\ol\sigma|x|},$$ which concludes
the proof.
\end{proof}

It is not hard to see that the upper bounds for $\gamma$ and $\eta$ are optimal.

\begin{proof}[Proof of Proposition \ref{pro:c<0}]
Proposition \ref{pro:l1''} part (ii) yields
$$\lim_{r\to\infty}\l(-L,\O\backslash\ol{B_r})\geq
\lim_{r\to\infty}
\l''(-L,\O\backslash\ol{B_r})\geq
\lim_{r\to\infty}(-\sup_{\O\backslash\ol{B_r}}c)=
-\xi.$$
Hence, if $\xi<0$ and $\l(-L,\O)>0$ we find that $\l''(-L,\O)>0$ by
\thm{equivalence}. Then the MP holds due to \thm{MP}.
Suppose now that $\l(-L,\O)<-\xi$ (which is the case if $\xi<0$ and
$\l(-L,\O)\leq0$). \thm{mgi} implies that
the eigenfunction $\vp_1$ associated with $\l(-L,\O)$ has minimal growth at
infinity.
Since $v\equiv1$ satisfies $(L+\l(-L,\O))v<0$ a.e.~in $\O\backslash
B_\rho$, for $\rho$ large enough, Definition \ref{def:mgi} implies
that $\vp_1$ is bounded. This concludes the proof of statement (i) and, owing to
Propositions \ref{pro:mgi} and \ref{pro:decay}, statement (ii) also follows.
\end{proof}

\begin{remark}\label{rem:c<0}
The hypothesis $\xi<0$ in Proposition \ref{pro:c<0} part (i) is sharp.
Indeed, we can construct an operator $L$ in $\R$, with a negative zero-order
term vanishing at $\pm\infty$, for which $\l(-L,\R)>0$ but the MP does not
hold. To this aim, consider a
nondecreasing odd function $b\in C^0(\R)$ such that $b=2$ in
$(1/\sqrt3,+\infty)$. Direct computation shows that the function
$u(x):=2-(x^2+1)^{-1}$ satisfies 
$$\fa x\in\R,\quad c(x):=-\frac{u''+b(x)u'}u<0,\qquad
\lim_{x\to\pm\infty}c(x)=0.$$ Defining the operator $L$ by
$Lv:=v''+b(x)v'+c(x)v$, we get $Lu=0$ in $\R$. It is easily
seen that the function $\phi$ defined by
$\phi(x):=e^{-|x|}$ for $|x|\geq1/\sqrt3$ can be extended to the whole line as
a positive smooth
function satisfying $\phi''+b(x)\phi'+\e\phi<0$ in $\R$, for some $\e>0$. As a
consequence, $\l(-L,\R)\geq\e$. Note that if instead of $\R$ we consider the
half line $\R^+$, we still have $\l(-L,\R^+)\geq\e$ and $u-1$ violates
the MP there. 
\end{remark}


\section{Continuous dependence of $\l$ with respect to the coefficients}
\label{sec:coefficients}

We know from statements (vii), (viii) of Proposition \ref{l1} that $\l$ is
Lipschitz-continuous 
(using the $L^\infty$ norm) in its dependence on the
coefficients $b_i$ and $c$. 
Let us show that, if $\O=\R^N$ and the coefficients are H\"older continuous, 
Schauder's estimates and Harnack's inequality imply the Lipschitz-continuity 
with respect to the $a_{ij}$ too. We point out that it is possible to use 
$\sup_{x\in\O}\|\.\nolinebreak\|_{L^p(B_1(x))}$, $p>1$, instead of the 
$L^\infty$ norm and to deal with discontinuous $b_i$, $c$. This was shown by 
A.~Ancona in Theorem 2' of \cite{Ancona3} using much more involved arguments 
than the simple observation presented below.

\begin{proposition}\label{pro:l1Lipsmooth}
Let $L_k=a_{ij}^k(x)\partial_{ij}+b_i(x)\partial_i+c(x)$, $k=1,2$,
be two uniformly elliptic operators with coefficients in $C^{0,\delta}(\R^N)$, 
$\delta\in(0,1)$. Then,
$$|\l(-L_1,\R^N)-\l(-L_2,\R^N)|\leq
C\sum_{i,j=1}^N\|a_{ij}^1-a_{ij}^2\|_{L^\infty(\R^N)},$$ where $C$
depends on $N$, the ellipticity constants of the operators and the
H\"older norms of the coefficients.
\end{proposition}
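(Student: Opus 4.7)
The natural strategy is to take the principal eigenfunction of one operator, feed it as a test function into the definition \eq{l1} of $\l$ for the other operator, and show that the resulting loss is controlled by $\sum_{i,j}\|a_{ij}^1-a_{ij}^2\|_{L^\infty(\R^N)}$. Concretely, by Proposition \ref{l1} part (v), $-L_1$ admits a positive eigenfunction $\vp\in W^{2,p}_\loc(\R^N)$ (for all $p<\infty$) satisfying $-L_1\vp=\lambda_1(-L_1,\R^N)\vp$. Since the coefficients are uniformly H\"older continuous, classical Schauder regularity upgrades $\vp$ to $C^{2,\alpha}_\loc$. Writing
$$(L_2+\lambda_1(-L_1,\R^N))\vp=(L_2-L_1)\vp=\bigl(a_{ij}^2(x)-a_{ij}^1(x)\bigr)\partial_{ij}\vp,$$
the goal reduces to showing $|\partial_{ij}\vp(x)|\leq C\vp(x)$ pointwise in $\R^N$, with a constant $C$ depending only on the allowed data.

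To establish this pointwise quotient bound, I would combine interior Schauder estimates with Harnack's inequality. First, Proposition \ref{l1} part (ii) together with the uniform bound on $\sup c$ gives a uniform (two-sided) bound on $\lambda_1(-L_k,\R^N)$, so the zero-order term in $L_1-\lambda_1(-L_1,\R^N)$ has a controlled $L^\infty$ and H\"older norm. The interior Schauder estimate for the equation $L_1\vp+\lambda_1(-L_1,\R^N)\vp=0$ in $B_2(x)$ then yields
$$\|\vp\|_{C^{2,\alpha}(B_1(x))}\leq C_1\|\vp\|_{L^\infty(B_2(x))},$$
with $C_1$ depending only on $N$, the ellipticity constants and the H\"older norms of the coefficients. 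The Krylov--Safonov (or classical) Harnack inequality applied to the positive solution $\vp$ then gives
$$\|\vp\|_{L^\infty(B_2(x))}\leq C_2\,\vp(x),$$
with $C_2$ of the same dependence. Combining these two inequalities yields the desired bound $|\partial_{ij}\vp(x)|\leq C\vp(x)$ with $C=C_1C_2$ uniform in $x$.

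Putting the pieces together,
$$(L_2+\lambda_1(-L_1,\R^N))\vp\leq C\Bigl(\sum_{i,j}\|a_{ij}^1-a_{ij}^2\|_{L^\infty(\R^N)}\Bigr)\vp\quad\text{a.e.~in }\R^N,$$
so, setting $\mu:=\lambda_1(-L_1,\R^N)-C\sum_{i,j}\|a_{ij}^1-a_{ij}^2\|_{L^\infty(\R^N)}$, the function $\vp$ is admissible for $\l(-L_2,\R^N)$ with parameter $\mu$ in \eq{l1}. This gives
$$\lambda_1(-L_2,\R^N)\geq\lambda_1(-L_1,\R^N)-C\sum_{i,j}\|a_{ij}^1-a_{ij}^2\|_{L^\infty(\R^N)},$$
and exchanging the roles of $L_1$ and $L_2$ (using the eigenfunction of $-L_2$ and the symmetric Schauder/Harnack constants, taken as the maximum over both operators) yields the opposite inequality.

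The main technical point, and the step that requires care, is ensuring that the constant $C$ in the bound $|\partial_{ij}\vp|\leq C\vp$ depends only on $N$, the ellipticity constants and the H\"older norms of the coefficients, and is otherwise independent of $\vp$ (or of $\lambda_1(-L_1,\R^N)$). This is precisely why uniform H\"older regularity of the coefficients is needed: it is what makes the Schauder constant global, and combined with the \emph{a priori} bound on $\lambda_1(-L_k,\R^N)$ coming from Proposition \ref{l1} part (ii), it supplies a translation-invariant interior estimate on the whole of $\R^N$. Everything else is a bookkeeping argument based on the definition \eq{l1}.
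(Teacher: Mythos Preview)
Your proposal is correct and follows essentially the same route as the paper's proof: take the eigenfunction $\vp_1$ of $-L_1$, write $(L_2+\l(-L_1,\R^N))\vp_1=(a_{ij}^2-a_{ij}^1)\partial_{ij}\vp_1$, and bound $|\partial_{ij}\vp_1|\leq C\vp_1$ pointwise by combining interior Schauder estimates with Harnack's inequality, then feed $\vp_1$ into \eq{l1} for $-L_2$ and symmetrize. If anything, you are slightly more careful than the paper in making explicit that the a~priori bound on $\l(-L_k,\R^N)$ from Proposition~\ref{l1}(ii) is needed to control the Schauder constant uniformly.
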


\begin{proof}
For $k\in\{1,2\}$, let $\vp_k$ be an eigenfunction of $-L_k$ in $\R^N$
associated with $\l(-L_k,\R^N)$, provided by Proposition \ref{l1} part (v).
We know that $\vp_k\in C^{2,\delta}(\R^N)$.
It holds that
$$(L_2+\l(-L_1,\R^N))\vp_1=(a_{ij}^2-a_{ij}^1)\partial_{ij}\vp_1
\quad\text{in }\R^N.$$ By Schauder's interior estimates
(see, e.g., Theorem 6.2 in \cite{GT})
there exists $h>0$, only depending on $N$, the ellipticity constants and the
H\"older norms of the coefficients of $L_1$, such that, for
$x\in\R^N$, $\|\vp_1\|_{C^2(B_1(x))} \leq
h\|\vp_1\|_{L^\infty(B_2(x))}$. Hence, Harnack's inequality yields
$$\fa x\in\R^N,\quad
\|\vp_1\|_{C^2(B_1(x))}\leq C\inf_{B_2(x)}\vp_1 \leq
C\vp_1(x),$$ for some positive constant $C$. As a consequence,
$(L_2+\lambda)\vp_1\leq0$ in $\R^N$, with
$$\lambda=\l(-L_1,\R^N)-C\sum_{i,j=1}^N\|a_{ij}^1-a_{ij}^2\|_{L^\infty(\R^N)}.$$
Taking $\phi=\vp_1$ in the definition of $\l(-L_2,\R^N)$, we then
derive
$$\l(-L_2,\R^N)\geq\l(-L_1,\R^N)-
C\sum_{i,j=1}^N\|a_{ij}^1-a_{ij}^2\|_{L^\infty(\R^N)}.$$
Exchanging the roles of $L_1$ and $L_2$, one gets the two-sided
inequality.
\end{proof}

Next, we derive a semicontinuity property under some weak convergence hypotheses on
the coefficients, as well as a continuity result when $\O=\R^N$ and the
limit operator has continuous coefficients.

\begin{proposition}\label{thm:l1C}
Let $(L_n)_{n\in\N}$ be a sequence of operators in $\O$
of the type
$$L_n u=a_{ij}^n(x)\partial_{ij} u + b_i^n(x)\partial_i u
+c^n(x)u.$$ 
The following properties hold true:
\begin{enumerate}[(i)]
\item if for any $r>0$, the sequences $(a_{ij}^n)_{n\in\N}$,
$(b_i^n)_{n\in\N}$, $(c^n)_{n\in\N}$ are bounded in 
$L^\infty(\O\cap B_r)$, the $(a_{ij}^n)$ are in $C(\ol\O)$ with
smallest eigenvalues $\ul\alpha^n$ satisfying $\inf_{n\in\N}\inf_{\O\cap 
B_r}\ul\alpha^n>0$, and there is $p>1$ such that $a_{ij}^n\to a_{ij}$ in
$L^p_\loc(\O)$ and $b_i^n\rightharpoonup b_i,\
c^n\rightharpoonup c$ in $L^1_\loc(\O)$, then
$$\l(-L,\O)\geq\limsup_{n\to\infty}\l(-L_n,\O);$$


\item if $\O=\R^N$, $L$ is uniformly elliptic, $a_{ij}\in C^{0,\delta}(\R^N)$, 
the $b_i,\ c$ are bounded and uniformly continuous and 
$a_{ij}^n\to a_{ij},\
b_i^n\to b_i,\ c^n\to c$ in $L^\infty(\R^N)$, then
$$\l(-L,\R^N)=\limn\l(-L_n,\R^N).$$
\end{enumerate}
\end{proposition}

\begin{proof}
We write for short $\l:=\l(-L,\O)$ and $\l^n:=\l(-L_n,\O)$. By hypothesis, in 
both cases (i) and (ii), the sequence $(\l^n)_{n\in\N}$ is bounded from 
above due to Proposition \ref{l1} part (ii).

(i) Consider a subsequence of $(\l^n)_{n\in\N}$
(that we still call $(\l^n)_{n\in\N}$) tending to
$\lambda^*:=\limsup_{n\in\N}\l^n$. We know that $\lambda^*<+\infty$. 
Let us suppose that $\lambda^*>-\infty$, because otherwise there 
is nothing to prove.
For  $n\in\N$, let $\vp^n$ be a generalized principal
eigenfunction associated with $\l^n$, normalized by $\vp^n(x_0)=1$,
where $x_0$ is a given point in $\O$. By usual arguments, the
$\vp^n$ converge (up to subsequences) in $C^1_\loc(\O)$ and weakly
in $W^{2,q}_\loc(\O)$, $\fa q<\infty$, to a nonnegative function
$\vp\in W^{2,q}_\loc(\O)$ satisfying $\vp(x_0)=1$. Then, it easily
follows from the hypotheses that $L_n\vp^n$ converges to $L\vp$ in
the sense of $\mc{D}'(\O)$.
%
Therefore, $(L+\lambda^*)\vp=0$ in $\mc{D}'(\O)$ and thus, as
$\vp\in W^{2,q}_\loc(\O)$, also a.e.~in $\O$. The \SMP\ then
yields $\vp>0$ in $\O$. Consequently, taking $\phi=\vp$ in \eq{l1}
we derive $\l\geq\lambda^*$.


(ii) Suppose first that the $b_i$, $c$ are uniformly H\"older
continuous. Arguing as in the proof of Proposition
\ref{pro:l1Lipsmooth} and then using Proposition \ref{l1} parts (vii), (viii),
we can find a positive constant $C$ such that, for $n\in\N$,
$$\l^n\geq \l-C\left(\sum_{i,j=1}^N
\|a_{ij}^n-a_{ij}\|_{L^\infty(\R^N)}+\sum_{i=1}^N
\|b_i^n-b_i\|_{L^\infty(\R^N)}\right)-\|c^n-c\|_{L^\infty(\R^N)}.$$ The
result follows from the above inequality and statement (i).

In order to deal with $b_i,\ c$ uniformly continuous, for any
fixed $\e>0$ consider some smooth functions $b_i^\e,\ c^\e$
satisfying
$$\|b_i-b_i^\e\|_{L^\infty(\R^N)}\leq\e,\qquad
\|c-c^\e\|_{L^\infty(\R^N)}\leq\e,$$ obtained for instance by
convolution with a mollifier (this is where the uniform continuity
of $b_i$, $c$ is required). Then, define the operators
$$L^\e:=a_{ij}(x)\partial_{ij} +b_i^\e(x)\partial_i
+c^\e(x),$$
$$L_n^\e:=a_{ij}^n(x)\partial_{ij} +(b_i^n(x)-b_i(x)+b_i^\e(x))\partial_i
+(c^n(x)-c(x)+c^\e(x)),$$ and call $\l^\e:=\l(-L^\e,\R^N),\
\l^{n,\e}:=\l(-L_n^\e,\R^N)$. Since $L^\e$ has H\"older continuous
coefficients, we know that there exists $n_\e\in\N$ such that, for
$n\geq n_\e$, $|\l^{n,\e}-\l^\e|\leq\e$. Hence, by statements (vii), (viii) of
Proposition \ref{l1} there exists a positive constant $C'$, 
independent of $n$ and $\e$, such that 
$$\fa n\geq n_\e,\quad
|\l^n-\l|\leq|\l^n-\l^{n,\e}|+|\l^{n,\e}-\l^\e|+|\l-\l^\e|\leq
(2C'+1)\e.$$
\end{proof}

In the last part of this section, we investigate the behavior of
$\l$ as the zero and the second order terms blow up as well as
when the ellipticity degenerates.

For $\gamma\in\R$, consider the operator
$$L^c_\gamma u:=a_{ij}(x)\partial_{ij} u+b_i(x)\partial_i u+\gamma c(x)u.$$
We set $\l^c(\gamma):=\l(-L^c_\gamma,\O)$.

\begin{theorem}\label{thm:l1c}
The function $\l^c:\R\to[-\infty,+\infty)$ is concave and satisfies the
following properties:
\begin{enumerate}[(i)]
\item $\l^c(0)\geq0$;

\item if $c$ is lower semicontinuous then
$$\lim_{\gamma\to+\infty}
\frac{\l^c(\gamma)}\gamma=-\sup_\O c;$$

\item if $c$ is
upper semicontinuous then
$$\lim_{\gamma\to-\infty}
\frac{\l^c(\gamma)}\gamma=-\inf_\O c.$$
\end{enumerate}
Moreover, if $c$ is bounded then $\l^c$ is uniformly Lipschitz-continuous
with Lipschitz constant $\|c\|_{L^\infty(\O)}$.
\end{theorem}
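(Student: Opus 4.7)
The plan is to derive everything from properties of $\l$ already collected in Proposition \ref{l1}, viewing $L^c_\gamma$ as the operator $L^c_0$ perturbed by the zero-order term $\gamma c$. Concavity of $\gamma\mapsto\l^c(\gamma)$ is immediate from Proposition \ref{l1}(vii): $\l$ is concave in its dependence on the zero-order coefficient, and this concavity is preserved under the affine substitution $\gamma\mapsto\gamma c$. For $\l^c(0)\geq 0$, simply take $\phi\equiv 1$ in definition \eq{l1}: since $L^c_0\cdot 1=0$, the value $\lambda=0$ is admissible. The uniform Lipschitz bound with constant $\|c\|_{L^\infty(\O)}$ is direct from the Lipschitz part of Proposition \ref{l1}(vii), since $\|\gamma_1 c-\gamma_2 c\|_{L^\infty(\O)}=|\gamma_1-\gamma_2|\,\|c\|_{L^\infty(\O)}$.

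For the asymptotic as $\gamma\to+\infty$ with $c$ lower semicontinuous, I would squeeze $\l^c(\gamma)/\gamma$. The inequality $-\sup_\O(\gamma c)\leq\l^c(\gamma)$ from Proposition \ref{l1}(ii) reads $\l^c(\gamma)\geq-\gamma\sup_\O c$ for $\gamma>0$, yielding $\l^c(\gamma)/\gamma\geq-\sup_\O c$ (trivially if $\sup_\O c=+\infty$). For the matching upper bound, fix a ball $B\subset\O$. Since $\gamma c\geq\gamma\inf_B c$ on $B$, the monotonicity of $\l$ in the zero-order coefficient (Proposition \ref{l1}(vii)) together with Proposition \ref{l1}(iii) give
\[
\l^c(\gamma)\;\leq\;\l(-L^c_\gamma,B)\;\leq\;\l(-L^c_0,B)-\gamma\inf_B c,
\]
so $\limsup_{\gamma\to+\infty}\l^c(\gamma)/\gamma\leq-\inf_B c$. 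Lower semicontinuity is then used exactly to produce balls $B\subset\O$ with $\inf_B c$ arbitrarily close to $\sup_\O c$ (or arbitrarily large when $\sup_\O c=+\infty$): pick $x_0\in\O$ with $c(x_0)$ close to $\sup_\O c$; by lsc the superlevel set is open and contains $x_0$, so a small enough ball $B_\delta(x_0)\subset\O$ satisfies $\inf_{B_\delta(x_0)}c>c(x_0)-\e$. Letting $\e\to 0$ closes the squeeze.

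The case $\gamma\to-\infty$ with $c$ upper semicontinuous is entirely symmetric. The Proposition \ref{l1}(ii) estimate now reads $\l^c(\gamma)\geq-\sup_\O(\gamma c)=-\gamma\inf_\O c$, so dividing by the negative number $\gamma$ flips the inequality and gives $\l^c(\gamma)/\gamma\leq-\inf_\O c$. The companion ball bound $\l^c(\gamma)\leq\l(-L^c_0,B)-\gamma\sup_B c$ (this time using $\gamma c\geq\gamma\sup_B c$ on $B$ for $\gamma<0$), combined with upper semicontinuity of $c$ to produce balls with $\sup_B c$ arbitrarily close to $\inf_\O c$, supplies the matching $\liminf$. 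The only real subtlety is keeping straight the direction of inequalities when dividing by $\gamma<0$ and pairing the correct one-sided semicontinuity assumption with the ball estimate; this is precisely where the semicontinuity hypothesis enters, since without it the pointwise supremum (or infimum) of $c$ need not be approximated by infima (or suprema) over balls, and the limit formula could fail.
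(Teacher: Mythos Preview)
Your proof is correct and follows essentially the same approach as the paper: concavity, Lipschitz continuity, and statement (i) come directly from Proposition~\ref{l1}, the lower bound in (ii) from Proposition~\ref{l1}(ii), and the upper bound from restricting to a ball $B\subset\O$ where lower semicontinuity forces $\inf_B c$ close to $\sup_\O c$. The only cosmetic difference is that the paper obtains the ball estimate by explicitly testing with the Dirichlet eigenfunction of $-L^c_0$ in $B$ (via the definition of $\l'$), whereas you invoke the monotonicity-and-shift property of $\l$ in the zero-order term abstractly; both yield the same inequality $\l^c(\gamma)\leq\l(-L^c_0,B)-\gamma\inf_B c$, and the paper reduces (iii) to (ii) by replacing $c$ with $-c$ rather than rerunning the symmetric argument as you do.
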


\begin{proof}
The concavity and the Lipschitz-continuity follow from
Proposition \ref{l1} part (vii). Statement (i) is an immediate consequence of
definition \eq{l1}. Let us prove (ii).
Proposition \ref{l1} part (ii) implies that, for $\gamma>0$,
$\l^c(\gamma)\geq-\gamma\sup c$. Hence, to prove the statement it is
sufficient to show that $\limsup_{\gamma\to+\infty}\l^c(\gamma)/\gamma
\leq-\sup_\O c$.
The lower semicontinuity
of $c$ implies that, for any given $\e>0$, there exists a ball
$B\subset\O$ such that $c>\sup_\O c-\e$ in $B$.
Let $\lambda_B$ and $\vp$ denote the Dirichlet \pe\ and eigenfunction of
the operator $-a_{ij}(x)\partial_{ij}-b_i(x)\partial_i$ in $B$.
For $\gamma>0$, the function $\vp$ satisfies, a.e.~in $B$,
$$(L^c_\gamma+\gamma(-\sup_\O c+2\e))\vp=
[-\lambda_B+\gamma(c(x)-\sup_\O c+2\e)]\vp>
(\e\gamma-\lambda_B)\vp.$$
Therefore, for $\gamma\geq\lambda_B/\e$,
taking $\phi=\vp$ in \eq{l1'} we get $\l'(-L^c_\gamma,B)\leq\gamma(-\sup_\O c+2\e)$.
Since $\l'(-L^c_\gamma,B)=\l(-L^c_\gamma,B)$, Proposition \ref{l1} part (iii) yields
$$-\sup_\O c+2\e\geq
\limsup_{\gamma\to+\infty}\frac{\l(-L^c_\gamma,B)}\gamma\geq
\limsup_{\gamma\to+\infty}\frac{\l^c(\gamma)}\gamma.$$
The proof of (ii) is thereby achieved due to arbitrariness of $\e$.
Statement (iii) follows from (ii) by replacing the operator $L$ with
$a_{ij}(x)\partial_{ij}+b_i(x)\partial_i-c(x)$.
\end{proof}

\begin{remark}
In the proof of \thm{l1c}, we have shown that
$$\lim_{\gamma\to+\infty}\frac{\l^c(\gamma)}\gamma\leq
-\sup\{k\in\R\ :\ \ex \text{a ball }B\subset\O\text{ such that }
c(x)\geq k\text{ in }B\}.$$ Clearly, if $c$ is lower
semicontinuous then the right-hand side of the above inequality
coincides with $-\sup_\O c$.
\end{remark}

For $\alpha>0$, we define
$$L^a_\alpha u:=\alpha a_{ij}(x)\partial_{ij} u+b_i(x)\partial_i u+c(x)u.$$
We set for brief $\l^a(\alpha):=\l(-L^a_\alpha,\O)$.

\begin{theorem}\label{thm:l1a}
The function $\l^a:\R_+\to[-\infty,+\infty)$ satisfies the following properties:
\begin{enumerate}[(i)]
\item if $L$ has bounded coefficients then
$\l^a$ is locally Lipschitz-continuous on $\R_+$;

\item if $\O$ contains balls of arbitrarily large radius and $L$ is
uniformly elliptic with bounded coefficients, then
$$\liminf_{\alpha\to+\infty}
\l^a(\alpha)\geq-\limsup_{\su{x\in\O}{|x|\to\infty}}
c(x),\qquad\limsup_{\alpha\to+\infty}
\l^a(\alpha)\leq-\liminf_{\su{x\in\O}{|x|\to\infty}} c(x);$$

\item if the $L^a_\alpha$ are self-adjoint then
$\l^a$ is concave and nondecreasing. If in addition $c$ is lower
semicontinuous then
$$\lim_{\alpha\to0^+}\l^a(\alpha)=-\sup_{\O} c.$$
\end{enumerate}
\end{theorem}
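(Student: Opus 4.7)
The three assertions are essentially independent and I would prove them in the order (iii), (i), (ii), which reflects their increasing difficulty. Part (iii) is immediate from the Rayleigh--Ritz formula (Proposition~\ref{l1}(vi)): for the self-adjoint family $L^a_\alpha$,
$$\l^a(\alpha)=\inf_{\phi\in C^1_c(\O)\setminus\{0\}}
\frac{\int_\O\bigl(\alpha\,a_{ij}(x)\partial_i\phi\,\partial_j\phi-c(x)\phi^2\bigr)\,dx}{\int_\O\phi^2\,dx},$$
and for every fixed admissible $\phi$ the Rayleigh quotient is affine in $\alpha$ with nonnegative slope, so the pointwise infimum $\l^a$ is concave and nondecreasing. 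The bound $\liminf_{\alpha\to0^+}\l^a(\alpha)\geq-\sup_\O c$ is Proposition~\ref{l1}(ii). For the matching upper bound, given $\varepsilon>0$, the lower semicontinuity of $c$ yields a ball $B\subset\O$ on which $\inf_B c\geq\sup_\O c-\varepsilon$; testing a fixed nonzero $\phi\in C^1_c(B)$ gives $\l^a(\alpha)\leq\alpha\|a\|_\infty\int|\nabla\phi|^2/\int\phi^2-(\sup_\O c-\varepsilon)$, and I let $\alpha\to0^+$ and then $\varepsilon\to 0$.

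For part (i) I would exploit the scaling identity $\l(-kL,\O)=k\,\l(-L,\O)$ ($k>0$, immediate from \eq{l1}) together with the algebraic decomposition
$$L^a_{\alpha_1}=\frac{\alpha_1}{\alpha_2}\Bigl(L^a_{\alpha_2}+\Bigl(\frac{\alpha_2}{\alpha_1}-1\Bigr)(b_i\partial_i+c)\Bigr),$$
which yields $\l^a(\alpha_1)=\tfrac{\alpha_1}{\alpha_2}\,\l\bigl(-L^a_{\alpha_2}-(\tfrac{\alpha_2}{\alpha_1}-1)(b_i\partial_i+c),\O\bigr)$. On a compact interval $[m,M]\subset\R_+$, the prefactor $\alpha_1/\alpha_2$ and the perturbation size $|\alpha_2/\alpha_1-1|$ depend Lipschitz-continuously on $(\alpha_1,\alpha_2)$, and the perturbation inside the $\l$ shifts only the first- and zeroth-order coefficients, by quantities of order $|\alpha_1-\alpha_2|$ in $L^\infty$ norm. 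The Lipschitz statements (vii) and (viii) of Proposition~\ref{l1}---whose constants depend only on $N$, $\O$, the ellipticity constants of $L^a_{\alpha_2}$ (uniform for $\alpha_2\in[m,M]$) and the $L^\infty$ norms of the coefficients---then combine with the triangle inequality to give the required local Lipschitz estimate for $\l^a$.

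Part (ii) is the delicate step and houses the main obstacle. For the upper bound $\limsup_{\alpha\to\infty}\l^a(\alpha)\leq-\liminf_{|x|\to\infty}c(x)$, I pick $\beta<\liminf_{|x|\to\infty}c(x)$ and $R$ with $c\geq\beta$ on $\O\setminus\overline{B_R}$. The hypothesis that $\O$ contains arbitrarily large balls forces such balls to escape to infinity as their radius grows (since $\O$ is bounded near the origin), so I may select $B_{\rho_\alpha}(y_\alpha)\subset\O\setminus\overline{B_R}$ with $\rho_\alpha\to\infty$ and $\alpha/\rho_\alpha^2\to 0$. Proposition~\ref{l1}(iii) gives $\l^a(\alpha)\leq\l(-L^a_\alpha,B_{\rho_\alpha}(y_\alpha))$, and a rescaling together with Proposition~\ref{l1}(ii) bounds the right-hand side by $C_0\,\alpha/\rho_\alpha^2+C_1\|b\|_\infty/\rho_\alpha-\beta\to-\beta$; I then send $\beta\nearrow\liminf c$. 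The lower bound $\liminf_{\alpha\to\infty}\l^a(\alpha)\geq-\limsup_{|x|\to\infty}c(x)$ is the hard part: given $\eta>\limsup_{|x|\to\infty}c(x)$ and $R$ with $c\leq\eta$ outside $B_R$, I must construct a globally positive $\phi$ on $\O$ with $\inf_\O\phi>0$ satisfying $(L^a_\alpha-\eta)\phi\leq 0$ for all large $\alpha$. My ansatz is $\phi\equiv 1$ outside $B_{R+1}\supset\overline{B_R}$ and $\phi=1+\psi$ inside, with $\psi\geq 0$ smooth, compactly supported in $B_{R+1}$, and $a_{ij}$-strictly concave on a neighborhood of $\overline{B_R}$, so that $a_{ij}\partial_{ij}\psi\leq-c_0<0$ there (uniform ellipticity enters here). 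Outside $B_{R+1}$, $(L^a_\alpha-\eta)\phi=c-\eta\leq 0$ by construction; inside, the estimate $(L^a_\alpha-\eta)\phi\leq-c_0\alpha+\|b\|_\infty\|\nabla\psi\|_\infty+(\sup_\O c-\eta)(1+\|\psi\|_\infty)$ becomes nonpositive once $\alpha$ exceeds a threshold depending only on $\eta$ and $R$, which gives $\l^a(\alpha)\geq-\eta$, and letting $\eta\searrow\limsup_{|x|\to\infty}c$ concludes. Producing a $\psi$ that is $a_{ij}$-concave over all of $\overline{B_R}$ while decaying smoothly to zero on $\partial B_{R+1}$ is the principal technical difficulty; I would build it by multiplying a rescaled Dirichlet eigenfunction of $-a_{ij}\partial_{ij}$ on a slightly enlarged ball by an appropriate cutoff, securing strict $a_{ij}$-concavity throughout a full neighborhood of $\overline{B_R}$.
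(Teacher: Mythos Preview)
Parts (i) and (iii) are fine and essentially match the paper's arguments: the paper also rewrites $L^a_\alpha=\alpha L^{b,c}_{1/\alpha}$ with $L^{b,c}_{1/\alpha}:=a_{ij}\partial_{ij}+\alpha^{-1}b_i\partial_i+\alpha^{-1}c$ and then invokes Proposition~\ref{l1}(vii),(viii), while (iii) is reduced to the Rayleigh--Ritz formula and \thm{l1c}(ii), which is exactly your test-function computation.

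The genuine problem is your lower bound in (ii). Your test function $\phi=1+\psi$, with $\psi\geq0$ smooth and compactly supported in $B_{R+1}$, cannot work, and the obstruction is not the one you identify. A nonnegative $C^2$ function with compact support cannot satisfy $a_{ij}\partial_{ij}\psi\leq0$ on its entire support (at the boundary of $\{\psi>0\}$ it attains a minimum, forcing the Hessian to be $\geq0$ there, and since $\psi$ rises inward, some positive curvature is unavoidable nearby). Hence there is necessarily an annular region inside $B_{R+1}\setminus\ol{B_R}$ where $a_{ij}\partial_{ij}\psi>0$. On that annulus you still have $c\leq\eta$, but
\[
(L^a_\alpha-\eta)\phi=\alpha\,a_{ij}\partial_{ij}\psi+b_i\partial_i\psi+(c-\eta)(1+\psi),
\]
and the first term blows up to $+\infty$ as $\alpha\to+\infty$. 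Your proposed fix---multiplying the Dirichlet eigenfunction of $-a_{ij}\partial_{ij}$ by a cutoff---does not help: the cutoff reintroduces a positive $a_{ij}\partial_{ij}$ contribution near the edge of its support, and using the eigenfunction alone (extended by zero) is not $C^1$ across $\partial B_{R+1}$ by Hopf's lemma.

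The paper avoids this by abandoning any $\alpha$-independent compactly supported bump. It takes instead the globally positive, $\alpha$-dependent test function
$\phi(x)=\vartheta(\alpha^{-1/8}|x|)$ with $\vartheta(\rho)=(e^\rho+e^{-\rho})^{-\alpha^{-1/2}}$,
whose two scalings are tuned so that $\alpha\,a_{ij}\partial_{ij}\phi$ contributes a term of order $\alpha^{1/4}$ times a factor $(1+\alpha^{-1/2})g(\alpha^{-1/8}|x|)-1$, with $g(\rho)=\tanh^2\rho$. For $|x|\leq R$ this factor is negative for large $\alpha$, while for $|x|>R$ it is merely bounded and one uses $c\leq\limsup c+\e$. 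This is precisely the kind of slowly varying, everywhere-positive barrier that sidesteps the transition-region blow-up. Your upper bound in (ii) via large balls is a reasonable alternative to the paper's citation of \cite{BHRossi}, but you should make the dependence of the constant $C$ in Proposition~\ref{l1}(ii) on $\alpha$ explicit before asserting $C_0\alpha/\rho_\alpha^2\to0$.
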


\begin{proof}
(i) For $\alpha>0$, we can write $L^a_\alpha=\alpha
L_{1/\alpha}^{b,c},$ with $$L_{1/\alpha}^{b,c}:=
a_{ij}(x)\partial_{ij}+\frac1\alpha b_i(x)\partial_i+ \frac1\alpha
c(x).$$ Therefore,
$\l^a(\alpha)=\alpha\l(-L_{1/\alpha}^{b,c},\O)$. The statement
then follows from statements (vii), (viii) of Proposition \ref{l1}.

(ii)
We make use of the estimate (4.3) in \cite{BHRossi}. It implies that
$$\l^a(\alpha)\leq-\liminf_{\su{x\in\O}{|x|\to\infty}}
\left(c(x)-\frac{|b(x)|^2}{4\alpha\inf\ul\alpha}\right).$$
Consequently,
$$\limsup_{\alpha\to+\infty}\l^a(\alpha)\leq
-\liminf_{\su{x\in\O}{|x|\to\infty}}c(x).$$ In order to
prove that \Fi{liminfalpha}
\liminf_{\alpha\to+\infty}\l^a(\alpha)
\geq-\limsup_{\su{x\in\O}{|x|\to\infty}}c(x), \Ff we define the function
$\phi(x):=\vartheta(\alpha^{-1/8}|x|)$, with
$$\vartheta(\rho):=(e^\rho+e^{-\rho})^{-\alpha^{-1/2}}.$$
As $\vartheta'(\rho)\leq0$ for $\rho\geq0$, it follows that, for
a.e.~$x\in\O$,
\begin{equation*}\begin{split}
a_{ij}\partial_{ij}\phi(x) &=
A(x)\alpha^{-1/4}\vartheta''(\alpha^{-1/8}|x|)
+\alpha^{-1/8}\vartheta'(\alpha^{-1/8}|x|)\frac
{\sum_{i=1}^N a_{ii}(x)-A(x)}{|x|}\\
&\leq A(x)\alpha^{-1/4}\vartheta''(\alpha^{-1/8}|x|),
\end{split}\end{equation*}
where $A(x)=\frac{a_{ij}(x)x_i x_j}{|x|^2}\geq\ul\alpha(x)$.
Thus, direct computation yields
$$L^a_\alpha\phi\leq\left[A(x)\alpha^{1/4}\left((1+\alpha^{-1/2})
g(\alpha^{-1/8}|x|)-1\right)+\norma{b}
\alpha^{-5/8}+c(x)\right]\phi,$$ with
$$g(\rho):=\left(\frac{e^\rho-e^{-\rho}}{e^\rho+e^{-\rho}}\right)^2.$$
For given $\e>0$, let $R>0$ be such that
$c\leq\limsup_{|x|\to\infty}c(x)+\e$ a.e.~in $\O\backslash B_R$.
For $\alpha$ large enough and for a.e.~$x\in\O\cap  B_R$ it holds true that
$g(\alpha^{-1/8}|x|)\leq 1/2$, and then that
$$L^a_\alpha\phi\leq\left(\frac12A(x)\alpha^{1/4}
(-1+\alpha^{-1/2})+\norma{b} \alpha^{-5/8}+c(x)\right)\phi.$$ On
the other hand, for a.e.~$x\in\O\backslash B_R$ we find
$$L^a_\alpha\phi\leq\left(A(x)\alpha^{-1/4}+\norma{b}
\alpha^{-5/8}+\limsup_{|x|\to\infty}c(x)+\e\right)\phi.$$
Consequently, $L^a_\alpha\phi\leq(\limsup_{|x|\to\infty}c(x)+2\e)
\phi$ a.e.~in $\R^N$ for $\alpha$ large enough. Therefore, by
definition \eq{l1} we obtain
$$\liminf_{\alpha\to+\infty}\l^a(\alpha)
\geq-\limsup_{|x|\to\infty}c(x)-2\e,$$ which concludes the proof
due to the arbitrariness of $\e$.

(iii) Proposition \ref{l1} part (vi) implies that the function
$\l^a$ is concave and nondecreasing.
Since $L^a_\alpha=\alpha L^c_{1/\alpha}$, it holds that
$\l^a(\alpha)=\alpha\l^c(1/\alpha)$. The last
statement then follows by applying \thm{l1c} part (ii).
\end{proof}

\appendix

\theoremstyle{definition}
\newtheorem*{Aproperties}{Properties A}

\section{Known results in bounded non-smooth domains}\label{sec:BNV}

Even though in the present paper we are only interested in the
case $\O$ smooth, in some of the proofs we deal with intersections
of smooth domains, which are no longer smooth. This is why we
require some of the tools developed in \cite{BNV} to treat the
non-smooth case. When $\O$ is non-smooth, the Dirichlet boundary
condition has to be relaxed to a weaker sense:
$$u\uguale0\quad\text{(resp.~$u\minore0$)\quad on }\partial\O,$$
which means that, if there is a sequence $\seq{x}$ in $\O$
converging to a point of $\partial\O$ such that $\limn
u_0(x_n)=0$, then
$$\limn u(x_n)=0\quad\text{(resp.~}
\limsup_{n\to\infty} u(x_n)\leq0\text{)},$$ where $u_0$ is the
``boundary function'' associated with the problem (see \cite{BNV}).
We do not need to define the function $u_0$ here since, in the
proofs, we only use the information $u\uguale0$ on smooth portions
of $\partial\O$. It suffices to know that, there, it coincides
with the standard Dirichlet condition. Indeed, it turns out that
if $u\uguale0$ on $\partial\O$ then it can be extended as a
continuous function to every $\xi\in\partial\O$ admitting a so
called ``strong barrier'' by setting $u(\xi)=0$. Since any point
$\xi\in\partial\O$ satisfying the exterior cone condition admits a
strong barrier, it follows that $u$ vanishes continuously on
smooth boundary portions of $\partial\O$.

We now assume that $L$ is uniformly elliptic and that
$$a_{ij}\in C^0(\ol\O),\qquad b_i,c\in L^\infty(\O).$$

\begin{definition}\label{def:refMP}
We say that the operator $L$ satisfies the {\em refined MP\,}\
in $\O$ if every function $u\in W^{2,N}_\loc(\O)$ such that
$$Lu\geq0 \ \text{ a.e.~in }\O,\qquad
\sup_{\O}u<\infty,\qquad
u\minore0\ \text{ on }\partial\O,$$
satisfies $u\leq0$ in $\O$.
\end{definition}

\begin{Aproperties}[\cite{BNV}]\label{A}
Let $\O$ be a general bounded domain. Then, the following properties
hold:

\begin{itemize}

\item[A.1\ ] There exists a positive bounded function
$\vp_1\in W^{2,p}_{loc}(\O)$, $\fa p<\infty$, called
generalized principal eigenfunction of $-L$ in $\O$, satisfying
$$\left\{\begin{array}{ll}
-L\vp_1=\l(-L,\O)\vp_1 & \text{a.e.~in }\O\\
\vp_1\uguale0 & \text{on }\partial\O;
\end{array}\right.$$
moreover, if $\O$ has a $C^{1,1}$ boundary portion $T\subset\partial\O$,
then $\vp_1\in W^{2,p}_{loc}(\O\cup T)$ and $\vp_1=0$ on $T$;

\item[A.2\ ]
If $\l(-L,\O)>0$ then $L$ satisfies the refined MP in $\O$;

\item[A.3\ ] If $\phi\in W^{2,N}_{loc}(\O)$ is bounded from above and satisfies
$$-L\phi\leq\l(-L,\O)\phi\quad\text{a.e.~in }\O,
\qquad \phi\minore0\quad\text{on }\partial\O,$$
then $\phi$ is a constant multiple of the generalized principal eigenfunction $\vp_1$;

\item[A.4\ ] If there exists a positive function
$\phi\in W^{2,N}_{loc}(\O)$ satisfying
$$L\phi\leq0\quad\text{a.e.~in }\O,$$
then either $\l(-L,\O)>0$ or $\l(-L,\O)=0$ and
$\phi$ is a constant multiple of $\vp_1$;

\item[A.5\ ] If $\l(-L,\O)>0$ then, given $f\in L^N(\O)$,
there is a unique bounded solution
$u\in W^{2,N}_{loc}(\O)$ satisfying
$$\left\{\begin{array}{ll}
Lu=f & \text{a.e.~in }\O\\
u\uguale0 & \text{on }\partial\O;
\end{array}\right.$$
moreover, if $\O$ has a $C^{1,1}$ boundary portion $T\subset\partial\O$,
then $u\in W^{2,N}_{loc}(\O\cup T)$ and $u=0$ on $T$;

\item[A.6\ ] If $\l(-L,\O)>0$ and $u\in W^{2,N}_{loc}(\O)$ is bounded
above and satisfies
$$Lu\geq f\quad\text{a.e.~in }\O,\qquad
u\minore\beta\quad\text{on }\partial\O,$$
for some nonpositive function $f\in L^N(\O)$ and nonnegative constant $\beta$,
then
$$\sup_\O u\leq \beta+A\left(\|f\|_{L^N(\O)}+\beta\sup c^+|\O|^{1/N}\right),$$
where $A$ only depends on $\O$, $\l(-L,\O)$, $\inf\ul\alpha$
and the $L^\infty$ norms of $a_{ij}$, $b_i$, $c$.

\end{itemize}
\end{Aproperties}

Property A.1 is Theorem 2.1 in \cite{BNV}, except for the
improved regularity of $\vp_1$ near the smooth boundary portion
$T$.
The latter follows from the standard local boundary estimate, even though a
technical difficulty arises because $\vp_1$ does not belong to $W^{2,p}(\O)$.
However, it can be overcome using the same approximation argument as in the
proof of
Lemma 6.18 in \cite{GT}.
%
%
The same is true for
the last statement of A.5. The other properties refer to the
following results of \cite{BNV}: A.2 is Theorem 1.1,
A.3 is Corollary 2.2, A.4 is Corollary 2.1, A.5
is Theorem 1.2, A.6 is Theorem 1.3.


\section{The inhomogeneous boundary Harnack inequality}\label{sec:bHarnack}

Using the refined Alexandrov-Bakelman-Pucci estimate, we extend the boundary
Harnack inequality - \thm{bHarnack} - to solutions of inhomogeneous Dirichlet
problems.

\begin{proposition}\label{pro:bHarnack}
Let $\O$ be a bounded domain and $\O'$ be an open subset of $\O$
such that $T:=\partial\O\cap (\O'+B_\eta)$ is of class $C^{1,1}$,
for some $\eta>0$.
Then, any nonnegative function $u\in W^{2,N}_{loc}(\O\cup T)$ such that
$$L^N(\O)\ni Lu\leq0 \quad \text{a.e.~in }\O,$$
satisfies
$$\sup_{\O'} u\leq \sup_T u+C\left(\inf_{\O^\delta}u+\|Lu\|_{L^N(\O)}+\sup_\O
c^+\,\sup_T u\right),$$
for all $\delta>0$ such that $\O^\delta\neq\emptyset$, with $C$
depending on
$N$, $\O$, $\delta$, $\eta$, $\inf\ul\alpha$, the
$L^\infty$ norms of $a_{ij}$, $b_i$, $c$ and $\l(-L,\O)$.
\end{proposition}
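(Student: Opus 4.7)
Since the constant $C$ is permitted to depend on $\lambda_1(-L,\O)$, we may assume $\lambda_1(-L,\O)>0$; otherwise the inequality becomes vacuous by taking $C$ large enough. The strategy is to decompose $u$ so as to absorb the source $Lu$ and the boundary datum $\sup_T u$ into two auxiliary functions $w,\Psi$ controlled by the refined Alexandrov--Bakelman--Pucci estimate (Property~A.6), reducing the claim to a combination of the homogeneous boundary Harnack inequality of Theorem~\ref{thm:bHarnack} and the Krylov--Safonov interior Harnack inequality applied to $u$.

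By Property~A.5 choose $w\in W^{2,N}_{loc}(\O\cup T)$ solving $Lw=Lu$ in $\O$ with $w\uguale 0$ on $\partial\O$. The sign $Lu\le0$ together with the refined MP (Property~A.2 applied to $-w$) gives $w\ge 0$ in $\O$, and applying Property~A.6 to both $w$ and $-w$ with $\beta=0$ yields $\|w\|_{L^\infty(\O)}\le A\|Lu\|_{L^N(\O)}$ for a constant $A$ depending only on the listed quantities. Set $M:=\sup_T u\ge0$ and let $\Psi\in W^{2,N}_{loc}(\O\cup T)$ solve $L\Psi=0$ in $\O$ with $\Psi\uguale M$ on $\partial\O$; writing $\Psi=M+\psi$ with $L\psi=-cM$ and $\psi\uguale 0$, Property~A.6 with $f=-c^+M\le 0$ gives $\sup_\O\psi\le AM\sup_\O c^+\,|\O|^{1/N}$, hence $\Psi\le M\bigl(1+A\sup_\O c^+|\O|^{1/N}\bigr)$, and Property~A.2 applied to $-\Psi$ gives $\Psi\ge 0$.

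Set $U:=u-w-\Psi$, so that $LU=0$ in $\O$ and, on the smooth boundary portion $T$, $U=u-M\le 0$ in the classical sense. Pick a smoothly bounded subdomain $\mathcal{O}\supset\O'$ with $\partial\mathcal{O}\cap\partial\O\subset T$, which is possible thanks to the $C^{1,1}$ regularity of $T$ in $\O'+B_\eta$; on the inner boundary $S:=\partial\mathcal{O}\cap\O$ we have $U\le u$ since $w,\Psi\ge 0$. Property~A.6 applied in $\mathcal{O}$ to $U$, with $f=0$ and $\beta:=\max(0,\sup_S U)$, yields $\sup_{\O'}U\le C\sup_S U^+$; the Krylov--Safonov interior Harnack inequality for the nonnegative function $u$ (valid because $u\in W^{2,N}_{loc}(\O)$ and $Lu\in L^N(\O)$), chained through $\O^\delta$, gives $\sup_S u\le C\bigl(\inf_{\O^\delta}u+\|Lu\|_{L^N(\O)}\bigr)$. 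Summing $u=U+w+\Psi$ and absorbing constants produces the target inequality
\[
\sup_{\O'}u\le M+C\bigl(\inf_{\O^\delta}u+\|Lu\|_{L^N(\O)}+M\sup_\O c^+\bigr).
\]

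The main technical obstacle is the construction of $\mathcal{O}$ together with the Harnack chain linking $S$ to $\O^\delta$: since $\O'$ may abut $T$ and therefore approach $\partial\O$, the inner boundary $S$ may not sit entirely in $\O^\delta$, and one must exploit the $C^{1,1}$ structure of $T$ in $\O'+B_\eta$ to construct $\mathcal{O}$ with $S$ at controlled positive distance from $\partial\O$ along a chain whose length, entering the final constant $C$, depends only on $N,\O,\delta,\eta,\inf\underline{\alpha}$ and the $L^\infty$ norms of the coefficients.
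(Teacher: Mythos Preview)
Your decomposition $u=U+w+\Psi$ is sensible, but the plan breaks down at the step where you bound $\sup_S U^+$ by the interior Harnack inequality.  For that you need the inner boundary $S=\partial\mathcal O\cap\O$ to sit at a fixed positive distance $d_0$ from $\partial\O$; you flag this as a ``technical obstacle'', but in fact such an $\mathcal O$ cannot exist in general.  Indeed, if $S\subset\O^{d_0}$ then $\partial\mathcal O$ is disjoint from the tubular layer $V:=\{x\in\O:\dist(x,\partial\O)<d_0\}$, so each connected component of $V$ is either contained in $\mathcal O$ or disjoint from it.  Since $\O'$ reaches $T$, $\mathcal O\supset\O'$ meets $V$, hence contains the whole component of $V$ touching $T$; when $\partial\O$ is connected this forces $\partial\mathcal O\cap\partial\O=\partial\O\not\subset T$.  (Take e.g.\ $\O=B_1$, $T$ a small arc, $\O'$ a tiny ball near $T$: any $\mathcal O$ with $S\subset\O^{d_0}$ swallows the full annulus $\{1-d_0<|x|<1\}$.)  So the reduction to interior Harnack plus ABP cannot replace the boundary Harnack content.

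The paper avoids this obstruction by not trying to push $S$ away from $\partial\O$ at all.  It builds a single auxiliary function $v$ (essentially your $w$ plus a version of $\Psi$ localized by a cutoff $\chi$ supported in $\O'+B_{\eta/2}$) solving $Lv=Lu$ with boundary datum $\chi u$; the refined MP gives $0\le v\le u$, and ABP bounds $\sup_\O v$ by $\sup_T u+A(\|Lu\|_{L^N}+\sup c^+\sup_T u)$.  The point is that $u-v$ is now nonnegative, $L$-harmonic, and \emph{vanishes} on the smooth portion $\partial\O\cap(\O'+B_{\eta/4})$, so the homogeneous boundary Harnack inequality (\thm{bHarnack}) applies directly to $u-v$ and gives $\sup_{\O'}(u-v)\le C'\inf_{\O^\delta}(u-v)\le C'\inf_{\O^\delta}u$.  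In short, rather than replacing \thm{bHarnack} by interior tools, the argument reduces \emph{to} it.

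A minor remark: your dismissal of the case $\l(-L,\O)\le0$ as ``vacuous by taking $C$ large'' is imprecise---$C$ must be uniform in $u$.  What actually happens (as in the paper) is that a nonnegative supersolution forces $\l\ge0$, and if $\l=0$ then $u$ is a multiple of $\vp_1$ by Property~A.4, so $Lu=0$, $u=0$ on $T$, and \thm{bHarnack} applies directly.
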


\begin{proof}
Suppose first that $\l(-L,\O)\leq0$. If $u$ vanishes somewhere in
$\O$ then $u\equiv0$ by the \SMP\ and the statement trivially holds.
If $u$ is positive then $\l(-L,\O)=0$. Thus, by Property A.4, 
$u$ is the generalized
principal eigenfunction of $-L$ in $\O$. In
particular, $Lu=0$ and $u=0$ (in the classical sense) on $T$.
The result then follows from \thm{bHarnack}.
Consider now the case $\l(-L,\O)>0$.
Set $f:=Lu$
and let $\chi:\R^N\to[0,1]$ be a smooth function such
that
$$\chi=1\quad\text{in }\O'+B_{\eta/4},\qquad
\chi=0\quad\text{outside }\O'+B_{\eta/2}.$$
Let $v\in W^{2,N}_{loc}(\O\cup T)\cap L^\infty(\O)$ be the solution
of the problem
$$\left\{\begin{array}{ll}
Lv=f & \text{a.e.~in }\O\\
v\uguale\chi u & \text{on }\partial\O.\\
\end{array}\right.$$
It is given by $v=w+\chi u$, where $w$ is the unique bounded solution of
$$\left\{\begin{array}{ll}
Lw=f-L(\chi u) & \text{a.e.~in }\O\\
w\uguale0 & \text{on }\partial\O,\\
\end{array}\right.$$
provided by Property A.5 (note that $\chi u\in W^{2,N}(\O)$). 
We have $0\leq v\leq u$ by the
refined MP - which holds due to Property A.2. The refined
Alexandrov-Bakelman-Pucci estimate - Property A.6 - yields
$$\sup_{\O} v\leq\sup_T u+
A\left(\|Lu\|_{L^N(\O)}+\sup_\O c^+\,
\sup_T u\right),$$
where $A$ depends on $\O$, $\l(-L,\O)$ and the coefficients of $L$.
Applying \thm{bHarnack} to $u-v$, we obtain
$$\sup_{\O'}(u-v)\leq C'\inf_{\O^\delta}(u-v)\leq C'\inf_{\O^\delta}u.$$
The result then follows by gathering the above inequalities.
\end{proof}



\section*{Acknowledgments}

\thanks{

The research leading to these results has received funding from the European
Research Council under the European Union's Seventh Framework Programme
(FP/2007-2013) / ERC Grant Agreement n.321186 - ReaDi -Reaction-Diffusion
Equations, Propagation and Modelling. Part of this work was done while Henri
Berestycki was visiting the University of Chicago. He was also supported by an
NSF FRG grant DMS - 1065979. Luca Rossi was partially supported by the
Fondazione CaRiPaRo Project ``Nonlinear Partial Differential Equations:
models, analysis, and control-theoretic problems''.
}


\frenchspacing

\def\cprime{$'$} \def\polhk#1{\setbox0=\hbox{#1}{\ooalign{\hidewidth
  \lower1.5ex\hbox{`}\hidewidth\crcr\unhbox0}}}
  \def\cfac#1{\ifmmode\setbox7\hbox{$\accent"5E#1$}\else
  \setbox7\hbox{\accent"5E#1}\penalty 10000\relax\fi\raise 1\ht7
  \hbox{\lower1.15ex\hbox to 1\wd7{\hss\accent"13\hss}}\penalty 10000
  \hskip-1\wd7\penalty 10000\box7}

\addcontentsline{toc}{section}{References}
\bibliographystyle{plain}

\end{document}